\newcommand{\schff}{X}
\newcommand{\adicff}{\mathcal{X}}
\newcommand{\Spa}{\mathrm{Spa}}
\newcommand{\slope}{\lambda}
\newcommand{\inj}{\hookrightarrow}
\newcommand{\surj}{\twoheadrightarrow}
\newcommand{\strsheaf}[1][]{\Ocal_{#1}}
\newcommand{\completion}[1]{\widehat{#1}}
\newcommand{\genlift}{\widetilde}
\newcommand{\genred}{\overline}
\newcommand{\gentuple}{\eta}
\newcommand{\genslope}{\lambda}
\newcommand{\dualpolygon}[1]{{#1}^*}
\newcommand{\lineseg}[2]{\underline{#1}^{(#2)}}
\newcommand{\HNslope}[2]{\slope_{#2}(#1)}
\newcommand{\slopedom}{\preceq}
\newcommand{\genvb}{\Ecal}
\newcommand{\gentorsor}{\Ecal}
\newcommand{\modification}[1]{{#1}'}
\newcommand{\gentrivialization}{\beta}
\newcommand{\punctuation}[1]{\widetilde{#1}}
\newcommand{\uniformizer}{\pi}
\newcommand{\pseudounif}{\varpi}
\newcommand{\genlocfield}{E}
\newcommand{\compmaxunram}[1]{\breve{#1}}
\newcommand{\FFring}{B}
\newcommand{\FFuntilt}{C}
\newcommand{\FFclosedpt}{\infty}
\newcommand{\genperfdring}{R}
\newcommand{\genpadicfield}{K}
\newcommand{\genperfalg}{A}
\newcommand{\integerring}[1]{\strsheaf[#1]}
\newcommand{\tilt}[1]{{#1}^\flat}
\newcommand{\inttilt}[1]{{#1}^{+\flat}}
\newcommand{\witt}{W}
\newcommand{\genfrob}{\phi}
\newcommand{\frobfieldaut}{\sigma}
\newcommand{\absgalgrp}{\Gamma}
\newcommand{\genabgrp}{M}
\newcommand{\gengrp}{H}
\newcommand{\topspace}[1]{\vert #1 \vert}
\newcommand{\BdR}{B_{\dR}}
\newcommand{\Be}{B_e}
\newcommand{\gendRunif}{t}
\newcommand{\fontainemap}{\theta}
\newcommand{\Newt}{\mathrm{Newt}}
\newcommand{\newtoncochar}[1]{\nu_{#1}}
\newcommand{\newtonmap}[1][]{%
\ifthenelse{\isempty{#1}}{{\nu}}{\nu(#1)}}%
\newcommand{\kottwitzmap}[1][]{%
\ifthenelse{\isempty{#1}}{{\kappa}}{\kappa(#1)}}%
\newcommand{\newtonset}[1]{\Ncal(#1)}
\newcommand{\newtonmaplevipart}[1]{\newtonmap^{(#1)}}
\newcommand{\newtonmaplevipartdual}[1]{\widehat{\newtonmap}^{(#1)}}
\newcommand{\newtonmaplevipartmid}{\omega}
\newcommand{\charlatt}{X^*(T)}
\newcommand{\cocharlatt}{X_*(T)}
\newcommand{\roots}{\Phi}
\newcommand{\coroots}{\Phi^\vee}
\newcommand{\gencocharacter}{\mu}
\newcommand{\genmaxtorus}{T}
\newcommand{\genborel}{B}
\newcommand{\genlevi}{M}
\newcommand{\weylgrp}{\Omega}
\newcommand{\levilift}{\dot}
\newcommand{\idcochar}{\mathbbm{1}}
\newcommand{\ordcochar}{\gencocharacter_{\mathrm{ord}}}
\newcommand{\simcochar}{\psi}
\newcommand{\genpartition}{\alpha}
\newcommand{\galavg}[1]{{#1}^\Diamond}
\newcommand{\cochardeg}[1]{{#1}^\sharp}
\numberwithin{equation}{section}
\newcommand{\F}{\mathbb{F}}
\newcommand{\Q}{\mathbb{Q}}
\newcommand{\Z}{\mathbb{Z}}
\newcommand{\Gm}{\mathbb{G}_{\mathrm{m}}}
\newcommand{\et}{\text{\'{e}t}}
\DeclareMathOperator{\GL}{GL}
\DeclareMathOperator{\GSp}{GSp}
\DeclareMathOperator{\Spec}{Spec\,}
\DeclareMathOperator{\Bun}{Bun}
\DeclareMathOperator{\Gr}{Gr}
\DeclareMathOperator{\Proj}{Proj\,}
\DeclareMathOperator{\dR}{dR}
\DeclareFontFamily{OT1}{rsfs}{}
\DeclareFontShape{OT1}{rsfs}{n}{it}{<-> rsfs10}{}
\DeclareMathAlphabet{\mathscr}{OT1}{rsfs}{n}{it}
\newcommand{\Ecal}{\mathcal{E}}
\newcommand{\Ncal}{\mathcal{N}}
\newcommand{\Ocal}{\mathcal{O}}
\newcommand{\Ycal}{\mathcal{Y}}
\newtheorem{lemma}[subsection]{Lemma}
\newtheorem{prop}[subsection]{Proposition}
\newtheorem{cor}[subsection]{Corollary}
\theoremstyle{remark}
\newtheorem*{remark}{Remark}
\newtheorem{defn}[subsection]{Definition}
\newtheorem{example}[subsection]{Example}
\newtheorem*{thm*}{Theorem}
\def\th@remark{%
  \thm@headfont{\bfseries}%
  \normalfont 
}
\def\imod#1{\allowbreak\mkern5mu({\operator@font mod}\,\,#1)}
\theoremstyle{theorem}
\newtheorem{theorem}[subsection]{Theorem}
\numberwithin{equation}{section}
\tikzset{double line with arrow/.style args={#1,#2}{decorate,decoration={markings,%
mark=at position 0 with {\coordinate (ta-base-1) at (0,1pt);
\coordinate (ta-base-2) at (0,-1pt);},
mark=at position 1 with {\draw[#1] (ta-base-1) -- (0,1pt);
\draw[#2] (ta-base-2) -- (0,-1pt);
}}}}
\begin{document}
	
	\tikzset{
		node style sp/.style={draw,circle,minimum size=\myunit},
		node style ge/.style={circle,minimum size=\myunit},
		arrow style mul/.style={draw,sloped,midway,fill=white},
		arrow style plus/.style={midway,sloped,fill=white},
	}
    
	\title{On nonemptiness of Newton strata in the $B_{\dR}^+$-Grassmannian for $\GSp_{2n}$}
   
    \author[S. Hong]{Serin Hong}
    \address{Simons Laufer Mathematical Sciences Institute, 17 Gauss Way, Berkeley, CA 94720}
    \email{serinh@umich.edu}
    
    \begin{abstract} We build upon our previous work \cite{Hong_generalnewtonstrataGLn} to study the Newton stratification on the $\BdR^+$-Grassmannian for $\GSp_{2n}$. Our main result gives an explicit classification of all nonempty Newton strata associated to a Frobenius-conjugacy class satisfying certain conditions on the Newton polygon. 
    \end{abstract}
	
	\maketitle

	\tableofcontents
	
	\rhead{}

	\chead{}
\section{Introduction}

The $\BdR^+$-Grassmannian is a $p$-adic analogue of the complex affine Grassmannian. It has played a fundamental role in a number of major developments in $p$-adic geometry, including the work of Caraiani-Scholze \cite{CS_cohomunitaryshimura} on the cohomology of unitary Shimura varieties, the work of Scholze-Weinstein \cite{SW_berkeley} on the general construction of local Shimura varieties, and the work of Fargues-Scholze \cite{FS_geomLL} on the geometrization of the local Langlands correspondence. It has also been used as a central tool for studying the $p$-adic period domain by many authors, such as Chen-Fargues-Shen \cite{CFS_admlocus}, Shen \cite{Shen_HNstrata}, Chen \cite{Chen_FRconjnonbasic}, Viehmann \cite{Viehmann_weakadmlocNewton}, Nguyen-Viehmann \cite{NV_HNstrata}, and Chen-Tong \cite{CT_weakaddandnewton}.

In order to describe the geometry of the $\BdR^+$-Grassmannian, we consider a natural stratification called the \emph{Newton stratification}. 
Let $G$ be a connected reductive group over a finite extension $\genlocfield$ of $\Q_p$. We denote by $\Gr_G$ the $\BdR^+$-Grassmannian associated to $G$, and by $\Gr_{G, \gencocharacter}$ the Schubert cell associated to a dominant cocharacter $\gencocharacter$ of $G$. In addition, we write $\schff = \schff_\genlocfield$ for the Fargues-Fontaine curve over $\genlocfield$ and $\Bun_G$ for the stack of $G$-bundles on $\schff$. The work of Fargues \cite{Fargues_Gbundle} gives a natural bijection between the topological space $\topspace{\Bun_G}$ of $\Bun_G$ and the set $B(G)$ of Frobenius-conjugacy classes of $G(\compmaxunram{\genlocfield})$, where $\compmaxunram{\genlocfield}$ denotes the $p$-adic completion of the maximal unramified extension of $\genlocfield$. If we fix an element $b \in B(G)$ and a complete algebraically closed extension $\FFuntilt$ of $\genlocfield$, we get a natural map
\[ \Newt_b: \Gr_G(\FFuntilt) \longrightarrow |\Bun_G| \simeq B(G)\]
induced by the theorem of Beauville-Laszlo \cite{BL_beauvillelaszlothm}.
For each Schubert cell $\Gr_{G, \gencocharacter}$, the Newton stratification associated to $b$ is a decomposition
\[ \Gr_{G, \gencocharacter} = \bigsqcup_{\modification{b} \in B(G)} \Gr_{G, \gencocharacter, b}^{\modification{b}}\]
where $\Gr_{G, \gencocharacter, b}^{\modification{b}}(\FFuntilt)$ is the preimage of $\modification{b}$ in $\Gr_{G, \gencocharacter}(\FFuntilt)$ under the map $\Newt_b$.

In this paper, we study the Newton stratification of minuscule Schubert cells in the $\BdR^+$-Grassmannian for $\GSp_{2n}$. For a precise statement of our main result, let us recall some basic facts regarding the group $\GSp_{2n}$. As observed by Kottwitz \cite{Kottwitz_isocrystal}, an element in $B(\GSp_{2n})$ is determined by an invariant called the \emph{Newton polygon}, which is a concave polygon on the interval $[0, 2n]$ with integer breakpoints. In addition, we may represent each dominant cocharacter of $\GSp_{2n}$ as a $2n$-tuple of descending integers, or equivalently as a concave polygon on the interval $[0, 2n]$ where the slope on $[i-1, i]$ is given by the $i$-th entry of the $2n$-tuple. After multiplication by a central cocharacter, a minuscule dominant cocharacter of $\GSp_{2n}$ is either trivial or represented by the tuple $(\lineseg{1}{n}, \lineseg{0}{n})$, where we write $\lineseg{d}{n} := (d, \cdots, d) \in \Z^n$ for each $d \in \Z$. Since multiplication by a central cocharacter induces a natural bijection between Newton strata on Schubert cells (as noted in Proposition \ref{reduction to minimal minuscule}), it suffices to consider the minuscule cocharacter represented by $(1^{(n)}, 0^{(n)})$.

\begin{theorem}\label{classification of nonempty Newton strata for GSp2n, intro}
Let $b$ and $\modification{b}$ be elements in $B(\GSp_{2n})$ with their Newton polygons respectively denoted by $\newtonmap[b]$ and $\newtonmap[\modification{b}]$. Assume that the difference between any two distinct slopes in $\newtonmap[b]$ is greater than $1$. For the cocharacter $\gencocharacter$ of $\GSp_{2n}$ represented by the tuple $(\lineseg{1}{n}, \lineseg{0}{n})$, the Newton stratum $\Gr_{\GSp_{2n}, \gencocharacter, b}^{\modification{b}}$ is nonempty if and only if the following conditions are satisfied:
\begin{enumerate}[label=(\roman*)]
\item\label{modified Kottwitz set condition for nonempty Newton strata, intro} The polygon $\newtonmap[\modification{b}]$ lies below the polygon $\newtonmap[b] + \dualpolygon{\gencocharacter}$ with the same endpoints, where $\dualpolygon{\gencocharacter}$ denotes the cocharacter of $\GSp_{2n}$ represented by the tuple $(\lineseg{0}{n}, \lineseg{-1}{n})$. 
\smallskip

\item\label{slopewise dominance condition for nonemtpy Newton strata, intro} For $i = 1, 2, \cdots, 2n$, we have an inequality
\[\HNslope{\newtonmap[\modification{b}]}{i} \leq \HNslope{\newtonmap[b]}{i} \leq \HNslope{\newtonmap[\modification{b}]}{i} +1\]
where $\HNslope{\newtonmap[b]}{i}$ and $\HNslope{\newtonmap[\modification{b}]}{i}$ respectively denote the slopes of $\newtonmap[b]$ and $\newtonmap[\modification{b}]$ on $[i-1, i]$. 
\vspace{0.01in}	
\item\label{breakpoint condition for nonempty Newton strata, intro} For each breakpoint of $\newtonmap[b]$, there exists 
a breakpoint of $\newtonmap[\modification{b}]$ with the same $x$-coordinate. 
\end{enumerate}
\end{theorem}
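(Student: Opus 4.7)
The plan is to reduce the theorem to the $\GL_n$ classification from \cite{Hong_generalnewtonstrataGLn} via the standard embedding $\GSp_{2n} \hookrightarrow \GL_{2n}$, and then address the new ingredient, namely the symplectic compatibility of the modification. For necessity, the standard representation induces functorial maps $\Gr_{\GSp_{2n}} \to \Gr_{\GL_{2n}}$ and $\Bun_{\GSp_{2n}} \to \Bun_{\GL_{2n}}$ that are compatible with Newton stratifications, so nonemptiness of $\Gr_{\GSp_{2n}, \gencocharacter, b}^{\modification{b}}$ forces nonemptiness of the corresponding $\GL_{2n}$-stratum, and conditions (i)--(iii) then follow from the main theorem of \cite{Hong_generalnewtonstrataGLn} together with the observation that the slope-gap hypothesis is preserved by the standard representation.

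For sufficiency, let $\genvb_b = \bigoplus_\lambda \genvb_b^\lambda$ be the isoclinic decomposition of the underlying Fargues--Fontaine vector bundle of the $\GSp_{2n}$-bundle associated to $b$, and let $c$ denote the similitude slope, so that $\omega$ restricts to perfect pairings $\genvb_b^\lambda \otimes \genvb_b^{c-\lambda} \to \Lcal$ for $\lambda \neq c/2$ and to a symplectic form on $\genvb_b^{c/2}$ when $c/2$ occurs as a slope. The slope-gap hypothesis is decisive here: because $\gencocharacter = (\lineseg{1}{n}, \lineseg{0}{n})$ is minuscule, the modification can shift any HN slope of the underlying vector bundle by at most $1$, and since distinct slopes of $\newtonmap[b]$ are separated by more than $1$, the HN slopes of $\newtonmap[\modification{b}]$ (constrained by condition (ii)) partition uniquely into groups indexed by the slopes of $\newtonmap[b]$. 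This forces any modification realizing $\Gr_{\GL_{2n}, \gencocharacter, b}^{\modification{b}}$ to respect the isoclinic decomposition, reducing the construction of a symplectic modification to the isoclinic case: for each pair $\{\lambda, c-\lambda\}$ with $\lambda \neq c/2$, the $\GL_n$ classification produces a modification of $\genvb_b^\lambda$ realizing the prescribed slopes, and its transpose-inverse under $\omega$ supplies a matching modification of $\genvb_b^{c-\lambda}$; the resulting pair is automatically $\omega$-compatible.

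The main obstacle is the middle component at $\lambda = c/2$, where one needs a genuinely $\omega$-compatible modification of an isoclinic symplectic subbundle; transport of structure under $\omega$ no longer suffices because this subbundle is paired with itself. The plan is to solve this isolated $\GSp_{2m}$-instance by an explicit symmetrization of a $\GL_n$-modification, leveraging condition (iii) — which forces the breakpoints of $\newtonmap[\modification{b}]$ over this middle interval to be symmetric about its midpoint — to guarantee that the symmetrized modification still has the prescribed HN polygon rather than a strictly smaller one. Gluing the pairwise and self-dual pieces then yields the desired point in $\Gr_{\GSp_{2n}, \gencocharacter, b}^{\modification{b}}$.
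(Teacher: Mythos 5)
Your necessity argument matches the paper's in spirit: both pass through the standard embedding $\GSp_{2n} \hookrightarrow \GL_{2n}$ and the $\GL_{2n}$ result (Proposition~\ref{Newton stratum for GLn, conditions on nonempty strata}), and both extract the breakpoint condition~\ref{breakpoint condition for nonempty Newton strata, intro} from condition~\ref{slopewise dominance condition for nonemtpy Newton strata, intro} together with the slope-gap hypothesis. Your sufficiency argument is where the comparison diverges, and where there is a genuine gap.

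Conceptually your isoclinic-decomposition idea and the paper's Levi reduction are the same move: the centralizer $\genlevi$ of $\newtonmap[b]$ is a standard Levi of type $\GL_{\genpartition_1}\times\cdots\times\GL_{\genpartition_l}\times\GSp_{2(n-m)}$, and its factors are exactly your isoclinic blocks (with the paired blocks $\{\lambda, c-\lambda\}$ corresponding to the $\GL$ factors via the symplectic form and the self-dual block at $\lambda = c/2$ corresponding to $\GSp_{2(n-m)}$). The paper uses condition~\ref{breakpoint condition for nonempty Newton strata, intro} and Proposition~\ref{Levi reduction for B(GSp2n)} to reduce $b$ and $\modification{b}$ to $\genlevi$, observes that the reduction $\genred{b}$ is \emph{basic} in $\genlevi$, and then applies the basic-element nonemptiness criterion of Chen--Fargues--Shen and Viehmann (Proposition~\ref{nonempty newton strata assoc to basic element}) to the pair $(\genred{b}, \genred{\modification{b}})$ with a carefully chosen minuscule cocharacter $\genred{\gencocharacter}$ of $\genlevi$, after which Proposition~\ref{functoriality of newton strata} transports the resulting point back up to $\Gr_{\GSp_{2n},\gencocharacter}$. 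That basic-case input is the load-bearing step.

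Your proposal does not invoke it, and the substitute you offer for the self-dual $\GSp_{2m}$ block --- ``explicit symmetrization of a $\GL$-modification'' --- is where the argument breaks. Two concrete problems. First, condition~\ref{breakpoint condition for nonempty Newton strata, intro} says nothing about the breakpoints of $\newtonmap[\modification{b}]$ in the \emph{interior} of the middle interval: $\newtonmap[b]$ has no interior breakpoint there, so the condition is vacuous on that range; the symmetry you need comes from $\newtonmap[\modification{b}]\in\newtonset{\GSp_{2n}}$, not from condition~\ref{breakpoint condition for nonempty Newton strata, intro}, so your stated justification for why symmetrization is safe is simply the wrong hypothesis. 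Second and more seriously, there is no reason an ad hoc ``symmetrization'' of a $\GL$-point should land in the prescribed $\GSp$-Newton stratum rather than a more special one: modifications degenerate under specialization, and averaging or symmetrizing a point of $\Gr_{\GL}$ that realizes a given HN polygon generically produces a point whose associated bundle has a \emph{different} (typically lower) HN polygon. Making this precise for a symplectic semistable bundle is exactly the content of the basic case of the conjecture of Rapoport, proved in Proposition~\ref{nonempty newton strata assoc to basic element}; it is not something you can get for free by a symmetrization trick. You also need, for the paired blocks, to check that the ``transpose-inverse'' modification of $\genvb_b^{c-\lambda}$ has the HN type forced by $\newtonmap[\modification{b}]$ on that block and that the degrees and weights match the $\GSp$ similitude constraint, which amounts to verifying the Kottwitz-map condition $\kottwitzmap[\genred{\modification{b}}] = \kottwitzmap[\genred{b}] - \cochardeg{\genred{\gencocharacter}}$ that the paper checks explicitly. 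So the skeleton of your construction is sound and parallel to the paper's, but the self-dual block needs the basic-case nonemptiness theorem, not a symmetrization heuristic, and the compatibility of degrees across blocks needs to be verified rather than asserted.
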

\begin{figure}[H]
\begin{tikzpicture}[scale=0.7]
		\coordinate (p00) at (1.5, 4);
		\coordinate (p11) at (2.8, 6.3);
		\coordinate (r11) at (4, 7.8);
		\coordinate (p22) at (5.2, 8.7);
		\coordinate (p33) at (6.5, 9);
		\coordinate (p44) at (8, 8);

		\coordinate (left) at (0, 0);
		\coordinate (q0) at (1.5, 3.2);
		\coordinate (q1) at (4, 5.2);
		\coordinate (q2) at (6.5, 5.7);
		\coordinate (q3) at (8, 4);
		
		\coordinate (r1) at (4, 3.8);
		\coordinate (r2) at (6.5, 3.5);

		\coordinate (p0) at (1.5, 2.5);
		\coordinate (p1) at (2.8, 3.5);
		\coordinate (p2) at (5.2, 3.5);
		\coordinate (p3) at (6.5, 2.5);
		\coordinate (p4) at (8, 0);
				
		\draw[step=1cm,thick, color=red] (left) -- (q0) --  (q1) -- (q2) -- (q3);
		\draw[step=1cm,thick, color=green] (left) -- (p0) --  (p1) -- (r1) -- (p2) -- (p3) -- (p4);
		\draw[step=1cm,thick, color=teal] (left) -- (p00) --  (p11) -- (r11) -- (p22) -- (p33) -- (p44);
		\draw[step=1cm,thick, color=blue] (q1) -- (r2) -- (p4);
		
		\draw [fill] (q0) circle [radius=0.05];		
		\draw [fill] (q1) circle [radius=0.05];		
		\draw [fill] (q2) circle [radius=0.05];		
		\draw [fill] (q3) circle [radius=0.05];		
		\draw [fill] (left) circle [radius=0.05];
		
		\draw [fill] (r1) circle [radius=0.05];		
		\draw [fill] (r2) circle [radius=0.05];	
		
		\draw [fill] (p0) circle [radius=0.05];		
		\draw [fill] (p1) circle [radius=0.05];		
		\draw [fill] (p2) circle [radius=0.05];		
		\draw [fill] (p3) circle [radius=0.05];		
		\draw [fill] (p4) circle [radius=0.05];		

		\draw [fill] (p00) circle [radius=0.05];		
		\draw [fill] (p11) circle [radius=0.05];		
		\draw [fill] (r11) circle [radius=0.05];		
		\draw [fill] (p22) circle [radius=0.05];	
		\draw [fill] (p33) circle [radius=0.05];	
		\draw [fill] (p44) circle [radius=0.05];	
		
		\draw[step=1cm,dotted] (1.5, -0.4) -- (1.5, 9.2);
		\draw[step=1cm,dotted] (4, -0.4) -- (4, 9.2);
		\draw[step=1cm,dotted] (6.5, -0.4) -- (6.5, 9.2);

		
		\path (q1) ++(0.8, 0.6) node {\color{red}$\newtonmap[b]$};
		\path (r2) ++(1.8, -0.8) node {\color{blue}$\newtonmap[b] + \dualpolygon{\gencocharacter}$};
		\path (p00) ++(-1.8, 0) node {\color{teal}$\newtonmap[\modification{b}] + \mathrm{id}_{[0, 2n]}$};
		\path (p0) ++(0.7, -0.3) node {\color{green}$\newtonmap[\modification{b}]$};

\end{tikzpicture}
\caption{Illustration of the conditions in Theorem \ref{classification of nonempty Newton strata for GSp2n, intro}}
\end{figure}

Let us briefly sketch our proof of Theorem \ref{classification of nonempty Newton strata for GSp2n, intro}. We consider the natural embedding $\GSp_{2n} \inj \GL_{2n}$ and identify $b, \modification{b} \in B(\GSp_{2n})$ with their images under the induced map $B(\GSp_{2n}) \longrightarrow B(\GL_{2n})$. Then we obtain a natural map
\[ \Gr_{\GSp_{2n}, \gencocharacter, b}^{\modification{b}} \longrightarrow \Gr_{\GL_{2n}, \gencocharacter, b}^{\modification{b}},\]
and consequently deduce the necessity part of Theorem \ref{classification of nonempty Newton strata for GSp2n, intro} from a previous result of the author \cite{Hong_generalnewtonstrataGLn}. For the sufficiency part of Theorem \ref{classification of nonempty Newton strata for GSp2n, intro}, we regard the Newton polygons $\newtonmap[b]$ and $\newtonmap[\modification{b}]$ as rational cocharacters of $\GSp_{2n}$. Let $\genlevi$ denote the centralizer of $\newtonmap[b]$, which is a Levi subgroup of $\GSp_{2n}$. By construction, the element $b \in B(\GSp_{2n})$ admits a \emph{reduction} $b_\genlevi \in B(\genlevi)$; in other words, $b$ is the image of $b_\genlevi$ under the natural map $B(\genlevi) \longrightarrow B(\GSp_{2n})$. Moreover, the condition \ref{breakpoint condition for nonempty Newton strata, intro} implies that $\modification{b} \in B(\GSp_{2n})$ admits a reduction $\modification{b}_\genlevi \in B(\genlevi)$. Hence we obtain a natural map
\[ \Gr_{\genlevi, \gencocharacter, b_\genlevi}^{\modification{b}_\genlevi} \longrightarrow \Gr_{\GSp_{2n}, \gencocharacter, b}^{\modification{b}}.\]
Let us identify the Newton polygons of $b_\genlevi$ and $\modification{b}_\genlevi$ respectively with $\newtonmap[b]$ and $\newtonmap[\modification{b}]$. Since the Newton polygon of $b_\genlevi$ is central in $\genlevi$, we deduce the nonemptiness of $\Gr_{\genlevi, \gencocharacter, b_\genlevi}^{\modification{b}_\genlevi}$ from the conditions in Theorem \ref{classification of nonempty Newton strata for GSp2n, intro} by the work of Rapoport \cite{Rapoport_basicminuscule}, Chen-Fargues-Shen \cite{CFS_admlocus}, Shen \cite{Shen_HNstrata}, and Viehmann \cite{Viehmann_weakadmlocNewton}, thereby establishing the nonemptiness of $\Gr_{\GSp_{2n}, \gencocharacter, b}^{\modification{b}}$.

We expect that our strategy can be used to establish an analogous statement of Theorem \ref{classification of nonempty Newton strata for GSp2n, intro} for other classical groups. We plan to update this manuscript as we make progress on other groups.

\subsection*{Acknowledgments} 
The author would like to thank Linus Hamann and Tasho Kaletha for interesting discussions. 
This work was partially supported by the Simons Foundation under Grant Number 814268.

\section{Notations and conventions}\label{notations}

Throughout the paper, we consider the following data:

\begin{itemize}
\item $\genlocfield$ is a finite extension of $\Q_p$ with uniformizer $\uniformizer$. 
\smallskip

\item $\FFuntilt$ is a complete and algebraically closed extension of $\genlocfield$. 
\smallskip

\item $G$ is a reductive group over $\genlocfield$ with Borel subgroup $\genborel$ and maximal torus $\genmaxtorus \subseteq \genborel$. 
\end{itemize}
Given such data, we set up the following notations:
\begin{itemize}
\item $\absgalgrp$ is the absolute Galois group of $\genlocfield$. 
\smallskip

\item $\compmaxunram{\genlocfield}$ is the $p$-adic completion of the maximal unramified extension of $\genlocfield$.
\smallskip

\item $\frobfieldaut$ is the Frobenius automorphism of $\compmaxunram{\genlocfield}$
\smallskip

\item $B(G)$ is the set of $\frobfieldaut$-conjugacy classes of $G(\compmaxunram{\genlocfield})$. 
\smallskip

\item $(\charlatt, \roots, \cocharlatt, \coroots)$ is the absolute root datum of $G$.
\smallskip

\item $\weylgrp$ is the Weyl group of $G$. 
\end{itemize}
In addition, we use the following general notations:
\begin{itemize}
\item Given a valued field $\genpadicfield$, we denote its valuation ring by $\integerring{\genpadicfield}$. 
\smallskip


\smallskip

\item Given a perfectoid ring $\genperfdring$, we denote its tilt by $\tilt{\genperfdring}$.
\smallskip

\item Given a perfect $\F_p$-algebra $\genperfalg$, we write $\witt(\genperfalg)$ for the ring of Witt vectors over $\genperfalg$. 
\smallskip

\item Given an abelian group $\genabgrp$ equipped with an action of a group $\gengrp$, we write $\genabgrp^\gengrp$ and $\genabgrp_\gengrp$ respectively for the subgroups of $\gengrp$-invariants and $\gengrp$-coinvariants. 
\end{itemize}

\section{$G$-bundles on the Fargues-Fontaine curve}\label{G-bundles on FF curve}

\begin{defn}\label{FF curve} Let us fix a pseudouniformizer $\pseudounif$ of $\tilt{\FFuntilt}$. In addition, we write $q$ for the number of elements in the residue field of $\genlocfield$ and set $\witt_{\integerring{\genlocfield}}(\integerring{\tilt{\FFuntilt}}): = \witt(\integerring{\tilt{\FFuntilt}}) \otimes_{\witt(\F_q)} \integerring{\genlocfield}$.

\begin{enumerate}[label=(\arabic*)]
\item We define the \emph{perfectoid punctured unit disk} associated to the pair $(\genlocfield, \tilt{\FFuntilt})$ by
\[ \Ycal:= \Spa(\witt_{\integerring{\genlocfield}}(\integerring{\tilt{\FFuntilt}})\setminus\{|\uniformizer [\pseudounif]|=0\}\]
where $[\pseudounif]$ denotes the Teichmuller lift of $\pseudounif$ in $\witt_{\integerring{\genlocfield}}(\integerring{\tilt{\FFuntilt}})$.
\smallskip

\item We write $\genfrob$ for the automorphism of $\Ycal$ induced by the $q$-Frobenius automorphism on $\witt_{\integerring{\genlocfield}}(\integerring{\tilt{\FFuntilt}})$, and define the \emph{adic Fargues-Fontaine curve} associated to the pair $(\genlocfield, \tilt{\FFuntilt})$ by
\[ \adicff:= \Ycal/\genfrob^\Z.\]

\item We define the \emph{schematic Fargues-Fontaine curve} associated to the pair $(\genlocfield, \tilt{\FFuntilt})$ by
\[
\schff:= \Proj \left( \bigoplus_{n \geq 0 } \FFring^{\genfrob = \uniformizer^n} \right)
\]
where $\FFring$ denotes the ring of global sections on $\Ycal$. 
\end{enumerate}
\end{defn}

\begin{remark}
The construction of the adic Fargues-Fontaine curve relies on the fact that the action of $\genfrob$ on $\Ycal$ is properly discontinuous. 
\end{remark}

\begin{theorem}[{\cite[Theorem 4.10]{Kedlaya_noeth}}, {\cite[Th\'eor\`eme 6.5.2
]{FF_curve}}, {\cite[Theorem 8.7.7]{KL_relpadic1}}]\label{geom properties of FF curve} 
We have the following facts regarding $\adicff$ and $\schff$:
\begin{enumerate}[label=(\arabic*)]
\item $\adicff$ is a Noetherian adic space over $\genlocfield$. 

\item $\schff$ is a Dedekind scheme over $\genlocfield$. 
\smallskip

\item\label{GAGA for FF curve} The the categories of vector bundles on $\adicff$ and $\schff$ are equivalent via the pullback along a natural map $\adicff \longrightarrow \schff$ of locally ringed spaces.
\end{enumerate}
\end{theorem}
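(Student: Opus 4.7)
The theorem has three parts of quite different character, but all rest on a detailed analysis of the ring $\FFring$ of global sections on $\Ycal$ and its action by $\genfrob$. The common organizing tool is the cover of $\Ycal$ by rational subsets $\Ycal_{[r, s]} := \{ |\uniformizer|^{1/s} \leq |[\pseudounif]| \leq |\uniformizer|^{1/r}\}$ for $0 < r \leq s < \infty$; my plan is to treat the three parts in order using this cover.

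For part (1), I would first show that each $\Ycal_{[r,s]}$ is an affinoid adic space whose ring of global sections $\FFring_{[r,s]}$ is a principal ideal domain, hence Noetherian. The key input is that $\FFring_{[r,s]}$ is a Banach $\genlocfield$-algebra in which every nonzero element admits a well-defined Newton polygon over the associated annulus, reducing ideal-theoretic questions to the classification of closed points as zero loci of primitive elements of degree one. Noetherianness of $\adicff$ then follows because $\genfrob$ permutes the family $\{\Ycal_{[r, qr]}\}$ and acts properly discontinuously, so $\adicff$ is locally isomorphic to some $\Ycal_{[r, qr]}$.

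For part (2), I would analyze the graded ring $P = \bigoplus_{n \geq 0} \FFring^{\genfrob = \uniformizer^n}$ and show $\schff = \Proj P$ is a regular Noetherian scheme of Krull dimension one. The key step is to produce sufficiently many sections $t \in P_1$ and to verify that each localization $(P[t^{-1}])_0$ is a PID. One then classifies the closed points: they correspond bijectively to untilts of $\tilt{\FFuntilt}$ over $\genlocfield$, and the completed local ring at each such point is canonically identified with $\BdR^+$, a complete DVR. Combining regularity in codimension one with the Noetherian property yields the Dedekind conclusion.

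For part (3), the map $\adicff \to \schff$ is constructed by assembling maps on principal opens: on $\{t \neq 0\} \subset \adicff$ one has a canonical morphism to $D_+(t) = \Spec (P[t^{-1}])_0$ given by evaluation. Full faithfulness of pullback on vector bundles is comparatively clean because $H^0(\schff, \Ocal(n)) = P_n$ matches the corresponding $\genfrob$-equivariant global sections on $\adicff$. The hard part, which I expect to be the main obstacle, is essential surjectivity: one must show that every vector bundle on $\adicff$ descends to a $\genfrob$-equivariant sheaf on $\Ycal$ whose graded module of global sections is finitely generated over $P$. This is the core content of the Kedlaya--Liu argument and ultimately rests on the Harder--Narasimhan formalism on $\adicff$ together with the reduction, via slope filtrations, to the case of line bundles $\Ocal(\lambda)$ appearing in the Fargues--Fontaine classification of vector bundles on $\schff$.
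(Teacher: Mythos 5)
The paper does not prove this theorem; it is quoted from the literature, with the three parts attributed respectively to Kedlaya (\emph{Noetherian properties of Fargues--Fontaine period rings}, Theorem 4.10), Fargues--Fontaine (Th\'eor\`eme 6.5.2), and Kedlaya--Liu (Theorem 8.7.7). Your sketch is a reasonable reconstruction of the arguments in those sources, so there is nothing in the paper itself to compare against. A few cautions are worth flagging nonetheless. For (1), establishing that each $\FFring_{[r,s]}$ is a PID (or, what is actually needed for the adic-space machinery, that it is \emph{strongly} Noetherian, i.e.\ that $\FFring_{[r,s]}\langle T_1,\dots,T_n\rangle$ is Noetherian for all $n$) is the real technical core of Kedlaya's paper and goes well beyond the Newton-polygon heuristics you allude to; you should not present it as a reduction to a classification of closed points. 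For (3), the Kedlaya--Liu GAGA argument proceeds via ampleness of $\strsheaf(1)$ and cohomological finiteness on $\adicff$ (an argument in the spirit of Serre's GAGA), and while slope filtrations do appear in their general framework, attributing essential surjectivity directly to ``the Harder--Narasimhan formalism together with reduction to line bundles'' conflates GAGA with the separate classification theorem for vector bundles on the curve; the two results interact, but the logical structure in the sources is different from what you describe.
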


\begin{remark}
The statement \ref{GAGA for FF curve} in Theorem \ref{geom properties of FF curve} allows us to identify $G$-bundles on $\adicff$ with $G$-bundles on $\schff$. 
\end{remark}

\begin{defn}
%
%
Given an element $b \in B(G)$, we define the associated $G$-bundle $\genvb_b$ on $\adicff$ (or on $\schff)$ to be the descent along the map $\Ycal \longrightarrow \Ycal/\genfrob^\Z = \adicff$ of the trial $G$-bundle on $\Ycal$ equipped with the $\genfrob$-linear automorphism induced by $b$. 
\end{defn}


\begin{theorem}[{\cite[Th\'eor\`eme 5.1]{Fargues_Gbundle}}]\label{classification of G-bundles on FF curve}
There is a natural bijection 
\[ B(G) \stackrel{\sim}{\longrightarrow} H_{\et}^1(\adicff, G)\]
which maps each $b \in B(G)$ to the isomorphism class of $\genvb_b$. 
\end{theorem}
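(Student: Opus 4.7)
The plan is to construct the map and verify bijectivity by bootstrapping from two already-available classifications: vector bundles on $\schff$ via Fargues--Fontaine, and $T$-bundles for tori $T$ via Kottwitz.

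To set up, $b \mapsto \genvb_b$ descends to $\frobfieldaut$-conjugacy classes: if $b' = g b \frobfieldaut(g)^{-1}$ with $g \in G(\compmaxunram{\genlocfield})$, then translation by the induced section of the trivial $G$-bundle on $\Ycal$ intertwines the $\genfrob$-linear automorphisms attached to $b$ and $b'$, and therefore descends to an isomorphism $\genvb_b \cong \genvb_{b'}$. For injectivity, any isomorphism $\genvb_b \cong \genvb_{b'}$ pulls back to a $\genfrob$-equivariant automorphism of the trivial $G$-bundle on $\Ycal$, i.e.\ to an element $g \in G(\Ycal)$ with $g \cdot b = b' \cdot \genfrob^*(g)$. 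A rigidity argument, using that both $b$ and $b'$ lie in the constant subgroup $G(\compmaxunram{\genlocfield}) \subseteq G(\Ycal)$, forces $g$ to be constant as well, producing an element of $G(\compmaxunram{\genlocfield})$ witnessing $\frobfieldaut$-conjugacy.

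Surjectivity is the core of the theorem and proceeds in stages. For $G = \GL_n$, the claim is exactly the Fargues--Fontaine classification: every vector bundle on $\schff$ decomposes as a direct sum of isoclinic pieces $\strsheaf[\schff](\slope)$, and the resulting slope multiset matches the Newton polygon of the corresponding class $b \in B(\GL_n)$. For a torus $T$, both $B(T)$ and $H_{\et}^1(\schff, T)$ are canonically identified with $\cocharlatt_{\absgalgrp}$: the former via the Kottwitz isomorphism, and the latter via the degree isomorphism $\Pic(\schff) \cong \Z$ paired against $\cocharlatt$. For general reductive $G$, the strategy is to obtain a canonical Harder--Narasimhan reduction of any $G$-bundle $\genvb$ to a standard parabolic $\genparabolic = \genlevi \genuniprad$; the induced $\genlevi$-bundle is semistable and is captured by the torus and $\GL_n$ cases after passing to the derived subgroup and the central quotient of $\genlevi$. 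The unipotent radical contributes nothing because $H^1(\schff, \genuniprad) = \ast$, which follows by d\'evissage along a root filtration of $\genuniprad$ together with the ``genus zero'' vanishing $H^1(\schff, \strsheaf[\schff]) = 0$.

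The main obstacle is the existence and uniqueness (up to the Levi action) of the canonical Harder--Narasimhan reduction for $G$-bundles on $\schff$. One needs the correct notion of semistability --- formulated via slope functions on the characters of parabolics --- and must show that a reduction to a standard parabolic exists, is preserved under isomorphism, and is functorial with respect to inner twists so that one may assume $G$ is quasi-split at the outset. Once this structural input is in place, the assembly of the torus, $\GL_n$, and unipotent inputs is routine.
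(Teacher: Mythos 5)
This theorem is cited in the paper from Fargues (\cite[Th\'eor\`eme 5.1]{Fargues_Gbundle}) without an in-paper proof, so there is nothing here to compare against directly; I evaluate your sketch on its own terms. Your surjectivity outline --- Harder--Narasimhan reduction to a parabolic, handling the semistable Levi bundle via the torus and $\GL_n$ cases, and killing the unipotent radical by d\'evissage together with $H^1(\schff, \strsheaf[\schff]) = 0$ --- does capture the shape of Fargues's argument, and you correctly flag the HN formalism for $G$-bundles as the central structural obstacle.

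The injectivity step, however, has a genuine gap. The cocycle equation $g\,b\,\genfrob^*(g)^{-1} = b'$ with $b, b' \in G(\compmaxunram{\genlocfield})$ does \emph{not} force $g \in G(\Ycal)$ to be constant. Already for $G = \GL_2$ with $b = b' = \mathrm{diag}(\uniformizer, 1)$, every matrix $\left(\begin{smallmatrix} 1 & 0 \\ c & 1 \end{smallmatrix}\right)$ with $c$ a nonzero element of $\FFring^{\genfrob = \uniformizer}$ solves the equation, and this space is nonzero (it is $H^0$ of a positive line bundle on $\schff$) while containing no nonzero constants, since $\sigma(c) = \uniformizer c$ is impossible for $0 \neq c \in \compmaxunram{\genlocfield}$. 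So the rigidity you invoke fails: the correct statement is that the solution set, when nonempty, contains a constant element, and establishing this is precisely what needs to be proved rather than a formal consequence of $b, b'$ being constant. The standard route to injectivity is instead to show that $b \mapsto \genvb_b$ intertwines the Newton and Kottwitz invariants (the HN polygon and degree of $\genvb_b$ recover $\newtonmap[b]$ and $\kottwitzmap[b]$, cf.\ the remark following Example \ref{examples of newton map and kottwitz map}) and then invoke Proposition \ref{classification of B(G) via newton map and kottwitz map}; this compatibility falls out of the same HN structure theory you already need for surjectivity, so no separate rigidity argument is available or required.
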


\begin{defn}
We define the \emph{Newton set} of $G$ by
\[\newtonset{G}:= (\cocharlatt_\Q/\weylgrp)^\absgalgrp,\]
where we write $\cocharlatt_\Q:= \cocharlatt \otimes_{\Z} \Q$ for the space of rational cocharacters of $G$, and the \emph{algebraic fundamental group} of $G$ by
\[\pi_1(G):= \cocharlatt/\langle\coroots\rangle,\]
where $\langle\coroots\rangle$ denotes the subgroup of $\cocharlatt$ generated by $\coroots$. 
%
%
\end{defn}

\begin{prop}[{\cite[\S4]{Kottwitz_isocrystal}, \cite[\S6]{Kottwitz_isocrystal2}}]\label{newton map and kottwitz map}
There exist unique natural transformations
\[\newtonmap: B(\cdot) \longrightarrow \newtonset{\cdot} \quad \text{ and } \quad \kottwitzmap: B(\cdot) \longrightarrow \pi_1(\cdot)_\absgalgrp\]
of set-valued functors on the category of connected reductive groups over $\genlocfield$ such that the following statements hold:
\begin{enumerate}[label=(\arabic*)]
\item Under the map $\newtonmap: B(G) \longrightarrow \newtonset{G}$, the image of $b \in B(G)$ with a representative $\tilde{b}$ is given by a unique rational cocharacter $\newtoncochar{\tilde{b}}$ of $G$ for which there exist an integer $m>0$ and an element $c \in G(\compmaxunram{\genlocfield})$ with the following properties:
\smallskip
\begin{enumerate}[label=(\roman*)]
\item $m\newtoncochar{\tilde{b}}$ lies in $\cocharlatt$. 
\smallskip

\item $\mathrm{Int}(c) \circ m\newtoncochar{\tilde{b}}$ is defined over the fixed field of $\frobfieldaut^m$ in $\compmaxunram{\genlocfield}$, where $\mathrm{Int}(c)$ denotes the inner automorphism mapping each $g \in G(\compmaxunram{\genlocfield})$ to $cgc^{-1}$. 
\smallskip

\item We have $c \cdot \tilde{b} \cdot \frobfieldaut(\tilde{b}) \cdots \frobfieldaut^m(\tilde{b}) \cdot \frobfieldaut^m(c)^{-1} = c \cdot (m\newtoncochar{\tilde{b}})(\uniformizer) \cdot c^{-1}$. 
\end{enumerate}
\smallskip

\item We have a commutative diagram
\[
\begin{tikzcd}
\compmaxunram{\genlocfield}^\times \arrow[d, twoheadrightarrow] \arrow[r, "\mathrm{val}"] & \Z \arrow[d, -, double line with arrow={-,-}]\\
B(\Gm) \arrow[r, "\kottwitzmap"] & \pi_1(\Gm)_\absgalgrp
\end{tikzcd}
\]
where $\mathrm{val}: \compmaxunram{\genlocfield}^\times  \longrightarrow \Z$ is the $\uniformizer$-adic valuation map on $\compmaxunram{\genlocfield}$. 
\end{enumerate}
\end{prop}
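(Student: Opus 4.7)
The plan is to construct $\newtonmap$ and $\kottwitzmap$ first in accessible special cases and then extend by functoriality. For $\newtonmap$, I would treat $G = \GL_n$ via the Dieudonn\'e--Manin classification and reduce general $G$ to $\GL_n$ by a Tannakian argument. For $\kottwitzmap$, I would treat tori via the Lang--Steinberg theorem and extend via the abelianization $G \twoheadrightarrow G/G_{\mathrm{der}}$.

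In the $\GL_n$ case, I identify $b \in B(\GL_n)$ with the isomorphism class of a rank-$n$ isocrystal $(V, \Phi_b)$ over $\compmaxunram{\genlocfield}$, where $\Phi_b$ is the $\frobfieldaut$-linear map determined by the representative $\tilde{b}$. Dieudonn\'e--Manin gives a canonical isotypic decomposition of $(V, \Phi_b)$ with rational slopes $\slope_1 \geq \cdots \geq \slope_n$, and the diagonal rational cocharacter with these entries defines a canonical element $\newtoncochar{\tilde{b}} \in \newtonset{\GL_n}$. To verify (i)--(iii), I let $m$ be a common denominator of the slopes and take $c \in \GL_n(\compmaxunram{\genlocfield})$ to be the change-of-basis matrix from a standard basis adapted to the isotypic decomposition; condition (iii) then becomes a direct matrix identity.

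For general $G$, I fix a faithful representation $\rho: G \inj \GL_n$. The element $\rho(\tilde{b})$ produces a rational cocharacter of $\GL_n$ by the previous paragraph, and a Tannakian argument---comparing the slope decompositions attached to every rational representation of $G$---shows that this cocharacter descends uniquely to an element $\newtoncochar{\tilde{b}} \in \newtonset{G}$, with $m$ and $c$ chosen inside $G(\compmaxunram{\genlocfield})$ itself because the relations (i)--(iii) involve only conjugation and iteration of $\tilde{b}$ inside $G$. For $\kottwitzmap$, I first compute $B(T) \cong \cocharlatt_\absgalgrp = \pi_1(T)_\absgalgrp$ for a torus $T$ via Lang--Steinberg, which immediately yields the valuation diagram of (2) in the case $T = \Gm$; I then extend to general $G$ by composing $B(G) \to B(G/G_{\mathrm{der}})$ with the torus version and lifting the target from $\pi_1(G/G_{\mathrm{der}})_\absgalgrp$ to $\pi_1(G)_\absgalgrp$ using a compatibility on the basic locus coming from $B(G_{\mathrm{sc}})$.

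The hardest step is the Tannakian descent for $\newtonmap$: proving that the rational cocharacter associated to $\rho(\tilde{b})$ in $\newtonset{\GL_n}$ actually comes from a unique cocharacter of $G$ independent of $\rho$. This is Kottwitz's central observation, and it requires analyzing the centralizer of $\newtoncochar{\tilde{b}}$ (a Levi of $G$ over $\compmaxunram{\genlocfield}$) to ensure both well-definedness and $\absgalgrp$-invariance of the descended cocharacter. Uniqueness of $\newtonmap$ and $\kottwitzmap$ as natural transformations then follows formally from the universal property of the constructions in the $\GL_n$ and torus cases, together with the Tannakian reconstruction of $G$ from its category of representations.
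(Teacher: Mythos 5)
The paper does not prove this proposition; it cites Kottwitz's papers \cite{Kottwitz_isocrystal}, \cite{Kottwitz_isocrystal2} directly, so there is no internal proof to compare against. Your proposal is a reasonable reconstruction of Kottwitz's strategy in broad outline: the slope homomorphism for $\GL_n$ via Dieudonn\'e--Manin, Tannakian descent to general $G$ for $\newtonmap$, and the torus case via Lang for $\kottwitzmap$ are all the right starting points, and the verification of conditions (i)--(iii) for $\GL_n$ via a change of basis to an isotypic-adapted basis is correct.

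However, your construction of $\kottwitzmap$ for general $G$ has a real gap. You propose to factor through the abelianization $B(G) \to B(G/G_{\mathrm{der}})$, apply the torus isomorphism $B(D) \cong \pi_1(D)_\absgalgrp$ with $D = G/G_{\mathrm{der}}$, and then ``lift the target'' to $\pi_1(G)_\absgalgrp$. This works only when $G_{\mathrm{der}}$ is simply connected, in which case $\pi_1(G) \cong X_*(D)$. For general $G$ the natural map $\pi_1(G) \to \pi_1(G/G_{\mathrm{der}}) = X_*(D)$ need not be an isomorphism (e.g.\ $G = \PGL_n$ has $\pi_1(G) \cong \Z/n\Z$ while $G/G_{\mathrm{der}}$ is trivial), so there is no canonical lift, and the vague appeal to ``a compatibility on the basic locus coming from $B(G_{\mathrm{sc}})$'' does not supply one. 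Kottwitz's actual argument replaces $G$ by a $z$-extension $1 \to Z \to \widetilde{G} \to G \to 1$ with $\widetilde{G}_{\mathrm{der}}$ simply connected and $Z$ an induced central torus; he constructs $\kappa_{\widetilde{G}}$ via abelianization, then pushes forward along $\pi_1(\widetilde{G})_\absgalgrp \twoheadrightarrow \pi_1(G)_\absgalgrp$, using surjectivity of $B(\widetilde{G}) \to B(G)$ and independence of the chosen $z$-extension. Without this step you cannot define $\kottwitzmap$ for groups like $\PGL_n$ or $\SO_{2n}$. Relatedly, your uniqueness argument for $\kottwitzmap$ cannot be purely ``formal'': uniqueness of the natural transformation extending the $\Gm$-diagram in condition (2) requires propagating the torus normalization to general $G$ precisely through these reductions (tori, then $G$ with $G_{\mathrm{der}}$ simply connected, then $z$-extensions), not just through Tannakian reconstruction, which is tailored to $\newtonmap$ rather than $\kottwitzmap$.
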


\begin{remark}
If $G$ is unramified, there is an explicit description of the map $\kottwitzmap: B(G) \longrightarrow \pi_1(G)_\absgalgrp$ given by Rapoport-Viehmann \cite[Remark 2.3]{RV_localshimura}. 
\end{remark}

\begin{defn}
We refer to the maps $\newtonmap : B(G) \longrightarrow \newtonset{G}$ and $\kottwitzmap: B(G) \longrightarrow \pi_1(G)_\absgalgrp$ from Proposition \ref{newton map and kottwitz map} respectively as the \emph{Newton map} and the \emph{Kottwitz map} of $G$. 
\end{defn}

\begin{remark}
In line with Theorem \ref{classification of G-bundles on FF curve}, Fargues \cite[Propositions 6.6 and 8.1]{Fargues_Gbundle} provides a geometric description of $\newtonmap$ and $\kottwitzmap$ in terms of $G$-bundles on the Fargues-Fontaine curve. We will briefly sketch this description for $G = \GL_n$ after Example \ref{examples of newton map and kottwitz map}. 
\end{remark}

\begin{prop}[{\cite[\S4.13]{Kottwitz_isocrystal2}}]\label{classification of B(G) via newton map and kottwitz map}
Every element of $B(G)$ is uniquely determined by its image under the Newton map and the Kottwitz map. In other words, the map
\[ \newtonmap \times \kottwitzmap : B(G) \longrightarrow \newtonset{G} \times \pi_1(G)_\absgalgrp\]
is injective. 
\end{prop}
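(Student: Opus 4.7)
The plan is to reduce the statement to the known bijection between basic elements of $B(\cdot)$ and the corresponding coinvariants of the algebraic fundamental group, applied to a Levi subgroup of $G$. Recall that $b \in B(G)$ is \emph{basic} if its Newton point $\newtonmap[b]$ is central, i.e.\ factors through $Z(G)$. A foundational result of Kottwitz (\cite[\S5]{Kottwitz_isocrystal}) asserts that the Kottwitz map restricts to a bijection
\[ \kottwitzmap \colon B(G)^{\mathrm{basic}} \xrightarrow{\sim} \pi_1(G)_\absgalgrp, \]
which I would invoke as a black box; its proof reduces to the case of a torus $T$, where $B(T) \simeq X_*(T)_\absgalgrp$ agrees with $\pi_1(T)_\absgalgrp$ tautologically.

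I would then deploy a \emph{Newton reduction}. For any $b \in B(G)$, choose a dominant rational cocharacter $\newtoncochar{b}$ representing $\newtonmap[b]$, and let $\genlevi := \mathrm{Cent}_G(\newtoncochar{b})$, which is a standard Levi subgroup of $G$. Using the defining property of $\newtoncochar{b}$ recorded in Proposition \ref{newton map and kottwitz map} (in particular, the relation $c \cdot \tilde{b} \cdot \frobfieldaut(\tilde{b}) \cdots \frobfieldaut^m(\tilde{b}) \cdot \frobfieldaut^m(c)^{-1} = c \cdot (m\newtoncochar{b})(\uniformizer) \cdot c^{-1}$), a standard conjugation argument produces a representative of $b$ lying in $\genlevi(\compmaxunram{\genlocfield})$, hence yields a \emph{reduction} $b_\genlevi \in B(\genlevi)$ mapping to $b$ with $\newtonmap[b_\genlevi] = \newtoncochar{b}$; in particular, $b_\genlevi$ is basic in $\genlevi$ since $\newtoncochar{b}$ is central in $\genlevi$ by construction.

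Finally, given $b, \modification{b} \in B(G)$ with $\newtonmap[b] = \newtonmap[\modification{b}]$ and $\kottwitzmap[b] = \kottwitzmap[\modification{b}]$, the equality of Newton points provides a common Levi $\genlevi$ and basic reductions $b_\genlevi, \modification{b}_\genlevi \in B(\genlevi)$ sharing the same central Newton cocharacter $\newtoncochar{b}$. Functoriality of the Kottwitz map yields the commutative square
\[
\begin{tikzcd}
B(\genlevi) \arrow[r] \arrow[d, "\kottwitzmap"'] & B(G) \arrow[d, "\kottwitzmap"] \\
\pi_1(\genlevi)_\absgalgrp \arrow[r] & \pi_1(G)_\absgalgrp,
\end{tikzcd}
\]
so $\kottwitzmap[b_\genlevi]$ and $\kottwitzmap[\modification{b}_\genlevi]$ share the same image in $\pi_1(G)_\absgalgrp$. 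The main obstacle is that the bottom arrow fails to be injective in general, its kernel being generated by the images of the coroots of $G$ not lying in $\genlevi$. To overcome this, I would observe that on basic elements of $\genlevi$ the rational image of $\kottwitzmap$ in $\pi_1(\genlevi)_\absgalgrp \otimes \Q$ recovers the central Newton point $\newtoncochar{b}$, so the difference $\kottwitzmap[b_\genlevi] - \kottwitzmap[\modification{b}_\genlevi]$ is a torsion element of $\pi_1(\genlevi)_\absgalgrp$. A coroot-lattice analysis (of the type carried out in \cite[\S4.13]{Kottwitz_isocrystal2}) shows that this torsion subgroup injects into $\pi_1(G)_\absgalgrp$, forcing $\kottwitzmap[b_\genlevi] = \kottwitzmap[\modification{b}_\genlevi]$. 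The basic-case bijection applied to $\genlevi$ then gives $b_\genlevi = \modification{b}_\genlevi$, and hence $b = \modification{b}$.
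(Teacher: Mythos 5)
The paper does not prove this proposition; it is cited to Kottwitz (\cite[\S4.13]{Kottwitz_isocrystal2}) as a known result, so there is no internal argument to compare against. Your sketch follows the expected Newton-reduction strategy: pass to $M = \mathrm{Cent}_G(\nu_b)$, where the reduction $b_M$ is basic, invoke $B(M)_{\mathrm{basic}} \simeq \pi_1(M)_\Gamma$ (Proposition \ref{kottwitz map for basic elements}), and then try to pin down $\kappa_M(b_M)$ from the pair $(\nu_b, \kappa_G(b))$ using the compatibility of Proposition \ref{comm diagram for newton map and kottwitz map}. This outline is correct and is essentially what Kottwitz does.

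The crux, however, is left unproved and is genuinely nontrivial. You need the kernel of $\pi_1(M)_\Gamma \to \pi_1(G)_\Gamma$ to be torsion-free — equivalently, that the torsion subgroup of $\pi_1(M)_\Gamma$ injects into $\pi_1(G)_\Gamma$. Before taking $\Gamma$-coinvariants this is automatic: the kernel of $\pi_1(M) \to \pi_1(G)$ is $Q_G^\vee/Q_M^\vee$, and $Q_M^\vee$ is saturated in $Q_G^\vee$ because the simple coroots are $\Q$-linearly independent, so $Q_G^\vee/Q_M^\vee$ is torsion-free. But $\Gamma$-coinvariants of a torsion-free module need not be torsion-free, so in the non-split case this step requires a real argument (dually, one must show $\pi_0(Z(\hat G)^\Gamma) \to \pi_0(Z(\hat M)^\Gamma)$ is surjective, which does not follow formally from the decomposition $Z(\hat M) = Z(\hat G)\cdot Z(\hat M)^\circ$ because of $H^1$ obstructions in the Galois descent). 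Your attribution of exactly this step to ``a coroot-lattice analysis of the type carried out in [Kottwitz \S4.13]'' is circular, since that is the source of the proposition itself. What you have is a correct reduction of the statement to this lattice-theoretic lemma, not a self-contained proof; if you want to close the gap, that lemma is precisely where the work lies.
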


\begin{prop}[{\cite[Theorem 1.15]{RR_isocrystals}, \cite[\S2]{RV_localshimura}}]\label{comm diagram for newton map and kottwitz map}
The maps $\newtonmap$ and $\kottwitzmap$ fit into a natural commutative diagram
\[
\begin{tikzcd}
B(G) \arrow[d, "\newtonmap", swap] \arrow[rr, "\kottwitzmap"] & & \pi_1(G)_\absgalgrp \arrow[d] \\[0.5em]
 \newtonset{G} \arrow[r, hookrightarrow] & \cocharlatt_\Q \arrow[r, twoheadrightarrow] & \pi_1(G)_\absgalgrp \otimes_{\Z} \Q
\end{tikzcd}
\]
where the inclusion $\newtonset{G} \inj \cocharlatt_\Q$ is obtained by identifying $\cocharlatt_\Q/\weylgrp$ with the set of dominant rational cocharacters. 
\end{prop}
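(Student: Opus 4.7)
The strategy is to reduce the commutativity of the diagram to Proposition \ref{newton map and kottwitz map}(2), which gives the statement for $G = \Gm$, by exploiting the naturality of both $\newtonmap$ and $\kottwitzmap$ in $G$ together with the explicit characterization of the Newton cocharacter supplied by Proposition \ref{newton map and kottwitz map}(1).

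Fix $b \in B(G)$ with a representative $\tilde b \in G(\compmaxunram{\genlocfield})$, and set $\nu := \newtoncochar{\tilde b} \in \cocharlatt_\Q$. Applying Proposition \ref{newton map and kottwitz map}(1) and replacing $\tilde b$ by an appropriate $\sigma$-conjugate so that the auxiliary element $c$ appearing there may be taken to be the identity, we may arrange that $m\nu \in \cocharlatt$ for some $m > 0$, that $m\nu : \Gm \to G$ is defined over the fixed field of $\frobfieldaut^m$ in $\compmaxunram{\genlocfield}$, and that the $\sigma$-twisted $m$-fold product
\[\tilde b \cdot \frobfieldaut(\tilde b) \cdots \frobfieldaut^{m-1}(\tilde b) = (m\nu)(\uniformizer)\]
holds as an identity in $G(\compmaxunram{\genlocfield})$. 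The plan is then to apply $\kottwitzmap$ to both sides of this identity and compare the resulting elements of $\pi_1(G)_\absgalgrp$.

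For the right-hand side, naturality of $\kottwitzmap$ in $G$ applied to the homomorphism $m\nu : \Gm \to G$, combined with Proposition \ref{newton map and kottwitz map}(2) (which identifies $\kottwitzmap([\uniformizer]) \in \pi_1(\Gm)_\absgalgrp = \Z$ with $1$), shows that $\kottwitzmap([(m\nu)(\uniformizer)])$ equals the image of $m\nu$ under the natural map $\cocharlatt \to \pi_1(G)_\absgalgrp$. For the left-hand side, pass to the degree-$m$ unramified extension $\genlocfield_m \subseteq \compmaxunram{\genlocfield}$ of $\genlocfield$: over $\genlocfield_m$, the $\sigma$-twisted product simplifies to an ordinary $m$-th power of (the image of) $\tilde b$, and naturality of $\kottwitzmap$ with respect to the unramified base change $B(G_\genlocfield) \to B(G_{\genlocfield_m})$ identifies the Kottwitz invariant of the left-hand side with $m \cdot \kottwitzmap(b)$. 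Combining these two computations gives $m \cdot \kottwitzmap(b) = (m\nu)_*$ in $\pi_1(G)_\absgalgrp$; tensoring with $\Q$ and dividing by $m$ then yields precisely the commutativity claimed in the diagram.

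The technical core of the argument is the identification on the left-hand side: one must verify that the Kottwitz invariant of the $\sigma$-twisted product $\tilde b \cdot \frobfieldaut(\tilde b) \cdots \frobfieldaut^{m-1}(\tilde b)$, viewed as a class in $B(G)$, is indeed $m$ times $\kottwitzmap(b)$. This requires a careful analysis of how $\kottwitzmap$ interacts with unramified base change, and is the most delicate step; once this compatibility is granted, the remainder of the proof is a routine unwinding of naturality applied to the defining identity from Proposition \ref{newton map and kottwitz map}(1). The technical step is precisely what is worked out in \cite[Theorem 1.15]{RR_isocrystals} and \cite[\S2]{RV_localshimura}.
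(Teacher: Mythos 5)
The paper states this proposition as a citation to \cite{RR_isocrystals} and \cite{RV_localshimura} and supplies no internal proof, so there is no argument of record to compare yours against. Your reconstruction follows the natural route: after $\frobfieldaut$-conjugating to absorb the auxiliary element $c$, apply $\kottwitzmap$ to the identity $\tilde b\cdot\frobfieldaut(\tilde b)\cdots\frobfieldaut^{m-1}(\tilde b) = (m\newtoncochar{\tilde b})(\uniformizer)$ coming from Proposition \ref{newton map and kottwitz map}(1), evaluate the right side using naturality of $\kottwitzmap$ along $m\newtoncochar{\tilde b}\colon\Gm\to G$ and the $\Gm$-normalization in Proposition \ref{newton map and kottwitz map}(2), and then compare with the left side. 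This skeleton is sound and correctly isolates the genuine technical content.

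Two places deserve tightening. First, ``simplifies to an ordinary $m$-th power'' is misleading: the twisted product $\tilde b\cdot\frobfieldaut(\tilde b)\cdots\frobfieldaut^{m-1}(\tilde b)$ is not $\tilde b^m$; it is a representative of the image of $[\tilde b]$ under the restriction $B(G)\to B(G_{\genlocfield_m})$, where the target is formed with $\frobfieldaut^m$-conjugacy. Second, and more substantively, the asserted equality ``$\kottwitzmap$ of the left-hand side $= m\cdot\kottwitzmap[b]$'' compares elements of a priori different groups: $\kottwitzmap$ for $G$ over $\genlocfield_m$ takes values in the coinvariants of $\pi_1(G)$ for the open subgroup $\Gal(\overline{\genlocfield}/\genlocfield_m)\subseteq\absgalgrp$, and the compatibility one actually needs is that this element projects to $m\cdot\kottwitzmap[b]$ under the canonical surjection onto $\pi_1(G)_{\absgalgrp}$. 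Making that identification precise is exactly what the cited references carry out, and since the paper itself defers to them you are entitled to do the same; granting it, your final passage to $\Q$-coefficients and division by $m$ does deliver the claimed commutativity.
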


\begin{cor}\label{injectivity of newton map and torsion on fund grp}
If $\pi_1(G)_\absgalgrp$ is torsion free, the Newton map of $G$ is injective. 
\end{cor}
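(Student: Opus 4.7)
The plan is to deduce this directly from the two preceding results: the injectivity of $\newtonmap \times \kottwitzmap$ (Proposition \ref{classification of B(G) via newton map and kottwitz map}) together with the compatibility encoded in the commutative diagram of Proposition \ref{comm diagram for newton map and kottwitz map}. The key observation is that under the torsion-freeness hypothesis, the Kottwitz invariant is already determined by the Newton invariant, so the Newton map alone suffices to separate points of $B(G)$.

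More concretely, suppose $b, \modification{b} \in B(G)$ satisfy $\newtonmap(b) = \newtonmap(\modification{b})$. Applying the commutative diagram of Proposition \ref{comm diagram for newton map and kottwitz map}, the images of $\kottwitzmap(b)$ and $\kottwitzmap(\modification{b})$ in $\pi_1(G)_\absgalgrp \otimes_\Z \Q$ coincide, since both equal the image of the common rational cocharacter $\newtonmap(b) \in \cocharlatt_\Q$ under the surjection $\cocharlatt_\Q \twoheadrightarrow \pi_1(G)_\absgalgrp \otimes_\Z \Q$.

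Now I would invoke the hypothesis: when $\pi_1(G)_\absgalgrp$ is torsion free, the canonical map $\pi_1(G)_\absgalgrp \hookrightarrow \pi_1(G)_\absgalgrp \otimes_\Z \Q$ is injective, so the equality in $\pi_1(G)_\absgalgrp \otimes_\Z \Q$ forces $\kottwitzmap(b) = \kottwitzmap(\modification{b})$. Combining this with $\newtonmap(b) = \newtonmap(\modification{b})$ and applying the injectivity of $\newtonmap \times \kottwitzmap$ from Proposition \ref{classification of B(G) via newton map and kottwitz map} yields $b = \modification{b}$, as desired.

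There is no real obstacle here; the statement is essentially a formal diagram chase once Propositions \ref{classification of B(G) via newton map and kottwitz map} and \ref{comm diagram for newton map and kottwitz map} are in hand. The only thing worth being careful about is distinguishing $\pi_1(G)_\absgalgrp$ from $\pi_1(G)$ itself: the torsion freeness of the coinvariants (rather than of $\pi_1(G)$) is what makes the rightmost vertical arrow in Proposition \ref{comm diagram for newton map and kottwitz map} injective, and this is precisely the hypothesis needed.
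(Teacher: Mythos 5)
Your proof is correct and takes exactly the same route as the paper: torsion-freeness gives injectivity of $\pi_1(G)_\absgalgrp \to \pi_1(G)_\absgalgrp \otimes_\Z \Q$, the commutative diagram of Proposition \ref{comm diagram for newton map and kottwitz map} then shows $\kottwitzmap$ is determined by $\newtonmap$, and Proposition \ref{classification of B(G) via newton map and kottwitz map} finishes. You have simply written out the diagram chase that the paper leaves implicit.
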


\begin{proof}
When $\pi_1(G)_\absgalgrp$ is torsion free, the natural map $\pi_1(G)_\absgalgrp \longrightarrow \pi_1(G)_\absgalgrp \otimes_\Z \Q$ is injective. Hence the assertion follows from Propositions \ref{classification of B(G) via newton map and kottwitz map} and \ref{comm diagram for newton map and kottwitz map}. 
\end{proof}

\begin{defn}
We say that an element $b \in B(G)$ is \emph{basic} if $\newtonmap[b]$ is represented by a central rational cocharacter. 
\end{defn}

\begin{prop}\label{kottwitz map for basic elements}
Let $B(G)_\mathrm{basic}$ denote the set of basic elements in $B(G)$. The Kottwitz map induces a bijection
\[ B(G)_\mathrm{basic} \simeq \pi_1(G)_\absgalgrp.\]
\end{prop}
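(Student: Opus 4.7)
The plan is to deduce the bijection from the Newton--Kottwitz formalism of Propositions \ref{classification of B(G) via newton map and kottwitz map} and \ref{comm diagram for newton map and kottwitz map}, combined with the classical case of tori, where $\kottwitzmap_T : B(T) \xrightarrow{\sim} \pi_1(T)_\absgalgrp$ is known to be a bijection. Write $\mathfrak{z}_\Q \subseteq \cocharlatt_\Q$ for the subspace of central rational cocharacters; the key structural input is the Galois-equivariant decomposition $\cocharlatt_\Q = \mathfrak{z}_\Q \oplus \langle\coroots\rangle_\Q$, which shows that the projection $\cocharlatt_\Q \twoheadrightarrow \pi_1(G)_\Q$ restricts to an isomorphism $\mathfrak{z}_\Q \xrightarrow{\sim} \pi_1(G)_\Q$.

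For injectivity, suppose $b, b' \in B(G)_{\mathrm{basic}}$ satisfy $\kottwitzmap[b] = \kottwitzmap[b']$. By the commutative diagram of Proposition \ref{comm diagram for newton map and kottwitz map}, the cocharacters $\newtonmap[b]$ and $\newtonmap[b']$ have equal image in $\pi_1(G)_\absgalgrp \otimes_\Z \Q$, so their difference lies in $\langle\coroots\rangle_\Q$. But both are central (since $b, b'$ are basic) and therefore lie in $\mathfrak{z}_\Q$, forcing $\newtonmap[b] = \newtonmap[b']$ via the direct sum decomposition above. Proposition \ref{classification of B(G) via newton map and kottwitz map} then yields $b = b'$.

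For surjectivity, I fix $x \in \pi_1(G)_\absgalgrp$ and choose a maximal torus $T$ of $G$. Since $\pi_1(T) = \cocharlatt$ surjects onto $\pi_1(G) = \cocharlatt/\langle\coroots\rangle$, the induced map $\pi_1(T)_\absgalgrp \twoheadrightarrow \pi_1(G)_\absgalgrp$ is surjective. I would choose a lift $x_T \in \pi_1(T)_\absgalgrp$ of $x$, set $b_T := \kottwitzmap_T^{-1}(x_T) \in B(T)$, and let $b \in B(G)$ be its image; naturality of $\kottwitzmap$ yields $\kottwitzmap[b] = x$. The nontrivial task is to arrange, by a judicious choice of the lift $x_T$, that $\newtonmap[b]$ lies in $\mathfrak{z}_\Q$, so that $b$ is basic. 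The isomorphism $\mathfrak{z}_\Q^\absgalgrp \xrightarrow{\sim} \pi_1(G)_\absgalgrp \otimes_\Z \Q$ obtained from the decomposition above pinpoints a canonical candidate for $\newtonmap[b]$, namely the preimage of the rational image of $x$; one must then realize this central cocharacter as the Newton cocharacter of some $b \in B(G)$.

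The main obstacle is the final step, where possible torsion in $\pi_1(G)_\absgalgrp$ requires extra care: different lifts $x_T$ differ by elements of the kernel of $\pi_1(T)_\absgalgrp \twoheadrightarrow \pi_1(G)_\absgalgrp$, and one must show that this flexibility suffices to push $\newtonmap[b]$ into $\mathfrak{z}_\Q$ while preserving $\kottwitzmap[b] = x$. This is the technical heart of Kottwitz's argument in \cite{Kottwitz_isocrystal2}, and I would reproduce his construction---essentially passing through the cocenter $G/G^{\mathrm{der}}$ or equivalently using the central cocharacter to build an explicit representative in $Z(G)(\compmaxunram{\genlocfield})$---to conclude.
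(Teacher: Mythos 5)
The paper states Proposition \ref{kottwitz map for basic elements} without proof; it is a foundational theorem of Kottwitz (cf.\ \cite{Kottwitz_isocrystal}, \cite{Kottwitz_isocrystal2}), so there is no in-paper argument to compare against, and one should evaluate your sketch on its own.

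Your injectivity argument is correct and is the natural deduction from Propositions \ref{classification of B(G) via newton map and kottwitz map} and \ref{comm diagram for newton map and kottwitz map}: with $\mathfrak{z}_\Q$ denoting the central rational cocharacters, the Newton cocharacters of basic elements land in $\mathfrak{z}_\Q^\absgalgrp$, and the $\absgalgrp$-equivariant splitting $\cocharlatt_\Q = \mathfrak{z}_\Q \oplus \langle\coroots\rangle_\Q$ makes the composite $\mathfrak{z}_\Q^\absgalgrp \hookrightarrow \cocharlatt_\Q \twoheadrightarrow \pi_1(G)_\absgalgrp \otimes_\Z \Q$ an isomorphism, so $\kottwitzmap[b]=\kottwitzmap[b']$ forces $\newtonmap[b]=\newtonmap[b']$ and then $b=b'$. (To get the intermediate claim ``the difference lies in $\langle\coroots\rangle_\Q$'' you should first invoke $\absgalgrp$-invariance of the difference, since the kernel of $\cocharlatt_\Q \to \pi_1(G)_\absgalgrp \otimes_\Z \Q$ is strictly larger than $\langle\coroots\rangle_\Q$ in general.)

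The surjectivity half is a genuine gap, and I would push back on your diagnosis of the obstacle. After choosing a lift $x_T \in \pi_1(T)_\absgalgrp$ and taking $b_T = \kottwitzmap_T^{-1}(x_T)$, the Newton cocharacter of the image $b\in B(G)$ is the dominant representative of the Galois average of an integral lift $\tilde x_T\in\cocharlatt$; nothing forces this to be central. Varying $x_T$ within the (discrete) fiber of $\pi_1(T)_\absgalgrp \twoheadrightarrow \pi_1(G)_\absgalgrp$ shifts the Galois average only by a lattice inside $\langle\coroots\rangle_\Q^\absgalgrp$, so one cannot continuously tune it to the required central value $\nu$. In other words, the difficulty is not ``torsion in $\pi_1(G)_\absgalgrp$''; lifting through an arbitrary maximal torus simply does not produce a basic representative. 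The construction that does --- building an explicit $\frobfieldaut$-conjugacy class out of the central fractional cocharacter, or passing through the cocenter $D = G/G_{\mathrm{der}}$ and back --- is exactly the substance of Kottwitz's proposition, and deferring it means the surjectivity is being asserted rather than proved.
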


\begin{example}\label{examples of newton map and kottwitz map}
Let us discuss the Newton map and the Kottwitz map for some groups. 
\begin{enumerate}[label=(\arabic*)]
\item For $\GL_n$, we have natural identifications
\[ \newtonset{\GL_n} \cong \{(\genslope_1, \cdots, \genslope_n) \in \Q^n: \genslope_1 \geq \cdots \geq \genslope_n\} \quad \text{ and } \quad \pi_1(\GL_n)_\absgalgrp \cong \Z.\]
We regard elements of $\newtonset{\GL_n}$ as concave polygons on the interval $[0, n]$ with rational slopes. Then we have the following facts:
\smallskip
\begin{enumerate}[label=(\alph*)]
\item The image of the Newton map for $\GL_n$ consists of all concave polygons on $[0, n]$ with integer breakpoints, essentially by the classical result of Manin \cite{Manin_maninclassification}. 
\smallskip

\item An element $b \in B(\GL_n)$ is basic if and only if its image under the Newton map is a line segment. 
\smallskip

\item The Newton map of $\GL_n$ is injective by Corollary \ref{injectivity of newton map and torsion on fund grp}. 
\smallskip

\item Proposition \ref{comm diagram for newton map and kottwitz map} yields a commutative diagram
\begin{equation*}\label{comm diagram for newton map and kottwitz map of GLn}
\begin{tikzcd}
B(\GL_n) \arrow[d, "\newtonmap", swap] \arrow[r, "\kottwitzmap"] &[2em] \pi_1(\GL_n)_\absgalgrp \cong \Z \arrow[d, hookrightarrow] \\[0.5em]
 \newtonset{\GL_n} \arrow[r, "\sum"] & \Q
\end{tikzcd}
\end{equation*}
where $\sum$ maps each element in $\newtonset{\GL_n}$ to the sum of its entries. 
\end{enumerate}
\smallskip

\item For $\GSp_{2n}$, we consider the natural embedding into $\GL_{2n}$ and obtain an identification
\[ 
\newtonset{\GSp_{2n}} \cong \{(\genslope_1, \cdots, \genslope_{2n}) \in \newtonset{\GL_{2n}}: \genslope_i + \genslope_{2n+1-i} \text{ constant}\}.
\]
Moreover, we have $\pi_1(\GSp_{2n})_\absgalgrp \cong \Z$. 
Then we find the following facts:
\smallskip
\begin{enumerate}[label=(\alph*)]
\item An element $(\genslope_1, \cdots, \genslope_{2n}) \in \newtonset{\GSp_{2n}}$ lies in the image of the Newton map if and only if it has integer breakpoints with $\genslope_i + \genslope_{2n+1-i} \in \Z$ for $i = 1, \cdots, n$.  
\smallskip

\item An element $b \in B(\GSp_{2n})$ is basic if and only if its image under the Newton map is a line segment. 
\smallskip

\item The Newton map of $\GSp_{2n}$ is injective by Corollary \ref{injectivity of newton map and torsion on fund grp}. 
\smallskip

\item Proposition \ref{comm diagram for newton map and kottwitz map} yields a commutative diagram
\begin{equation*}\label{comm diagram for newton map and kottwitz map of GLn}
\begin{tikzcd}
B(\GSp_{2n}) \arrow[d, "\newtonmap", swap] \arrow[r, "\kottwitzmap"] &[2em] \pi_1(\GSp_{2n})_\absgalgrp \cong \Z \arrow[d, hookrightarrow] \\[0.5em]
 \newtonset{\GSp_{2n}} \arrow[r, "\frac{1}{n}\sum"] & \Q
\end{tikzcd}
\end{equation*}
where $\sum$ maps each element in $\newtonset{\GSp_{2n}}$ to the sum of its entries. 
\end{enumerate}
\end{enumerate}
\end{example}

\begin{remark}
Let us provide an explicit description of the Newton map and the Kottwitz map for $\GL_n$ in terms of vector bundles on $\schff$. 
By Theorem \ref{geom properties of FF curve}, the Harder-Narasimhan formalism applies to the category of vector bundles on $\schff$. Under the identifications for $\newtonset{\GL_n}$ and $\pi_1(\GL_n)_\absgalgrp$ in Example \ref{examples of newton map and kottwitz map}, $\newtonmap[b]$ and $\kottwitzmap[b]$ for each $b \in B(\GL_n)$ are respectively given by the Harder-Narasimhan polygon and degree of $\genvb_b$. 
In addition, an element $b \in B(\GL_n)$ is basic if and only if $\genvb_b$ is semistable. 
The injectivity of the Newton map for $\GL_n$ means that vector bundles on $\schff$ are determined by its Harder-Narasimhan polygon. 
\end{remark}

\section{The Newton stratification of the $\BdR^+$-Grassmannian}

\begin{prop}[{\cite[Proposition 2.4]{Fontaine_BdR}}, {\cite[Lemma 3.6.3]{KL_relpadic1}}]\label{relative fontaine map}
Every perfectoid affinoid algebra $(\genperfdring, \genperfdring^+)$ over $\genlocfield$ induces a natural surjective homomorphism $\fontainemap_{\genperfdring, \genperfdring^+}: \witt(\inttilt{\genperfdring}) \surj \genperfdring^+$ whose kernel is a principal ideal of $\witt(\inttilt{\genperfdring})$. 
\end{prop}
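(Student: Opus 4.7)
The plan is to construct $\fontainemap_{\genperfdring, \genperfdring^+}$ using the universal property of Witt vectors, then verify surjectivity and principality of the kernel as separate stages.

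\textbf{Construction.} The tilting equivalence for perfectoid algebras provides a multiplicative ``sharp'' map $\sharp: \inttilt{\genperfdring} \to \genperfdring^+$ that realizes the identification $\inttilt{\genperfdring}/\pseudounif \cong \genperfdring^+/\uniformizer$ for a suitable pseudouniformizer $\pseudounif$ of $\inttilt{\genperfdring}$. Since $\inttilt{\genperfdring}$ is a perfect $\F_q$-algebra and $\genperfdring^+$ is $\uniformizer$-adically complete and $\integerring{\genlocfield}$-flat, the universal property of the $\integerring{\genlocfield}$-typical Witt vectors lifts $\sharp$ to a unique $\integerring{\genlocfield}$-algebra homomorphism $\fontainemap: \witt_{\integerring{\genlocfield}}(\inttilt{\genperfdring}) \to \genperfdring^+$, described on Teichm\"uller expansions by $\sum_{n \geq 0} [x_n] \uniformizer^n \mapsto \sum_{n \geq 0} x_n^\sharp \uniformizer^n$.

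\textbf{Surjectivity.} Reducing modulo $\uniformizer$, the map $\fontainemap$ factors as $\witt_{\integerring{\genlocfield}}(\inttilt{\genperfdring}) \twoheadrightarrow \inttilt{\genperfdring} \twoheadrightarrow \inttilt{\genperfdring}/\pseudounif \cong \genperfdring^+/\uniformizer$ and is therefore surjective modulo $\uniformizer$. Since both $\witt_{\integerring{\genlocfield}}(\inttilt{\genperfdring})$ and $\genperfdring^+$ are $\uniformizer$-adically complete and separated, a standard successive-approximation argument promotes this to surjectivity of $\fontainemap$ itself.

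\textbf{Principality of the kernel.} The plan is to exhibit an explicit generator. Using the perfectoid hypothesis, I would choose a pseudouniformizer $\pi^\flat \in \inttilt{\genperfdring}$ whose sharp $(\pi^\flat)^\sharp$ agrees with $\uniformizer$ up to a unit $u \in \genperfdring^+$, and set $\xi := \uniformizer - u^{-1} [\pi^\flat]$, which by construction lies in $\ker \fontainemap$. The inclusion $(\xi) = \ker \fontainemap$ would then follow by successive approximation: any $\eta \in \ker\fontainemap$ has leading Teichm\"uller term divisible by $\uniformizer$, hence is congruent to a multiple of $\xi$ modulo $\uniformizer \witt_{\integerring{\genlocfield}}(\inttilt{\genperfdring})$, and iterating this step together with $\uniformizer$-adic completeness forces $\eta \in (\xi)$. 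That $\xi$ is a non-zero-divisor would be verified in parallel, using that $\xi$ reduces to $\uniformizer$ times a unit on the relevant graded pieces of the Teichm\"uller filtration.

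The delicate step, and the one I expect to be the main obstacle, is the principality claim: it requires simultaneously that $\xi$ generates $\ker \fontainemap$ and that $\xi$ is a non-zero-divisor in $\witt_{\integerring{\genlocfield}}(\inttilt{\genperfdring})$. In practice one reduces, via the local structure theory of perfectoid algebras, to the case where $\genperfdring$ is a perfectoid field, where Fontaine's original argument applies directly; the relative case is treated carefully by Kedlaya-Liu.
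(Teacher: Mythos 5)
The paper does not prove this proposition; it is imported from Fontaine (Proposition 2.4) and Kedlaya--Liu (Lemma 3.6.3), so there is no in-paper argument to compare against. Your outline does follow the same three-stage structure as the cited sources: construct $\fontainemap$ from the universal property of Witt vectors for perfect $\F_p$-algebras, deduce surjectivity from surjectivity modulo $\uniformizer$ together with $\uniformizer$-adic completeness, and exhibit a distinguished generator of the kernel.

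A few points need repair. First, the proposition as stated is about $\witt(\inttilt{\genperfdring})$, the $p$-typical Witt vectors (a $\Z_p$-algebra, per the paper's own conventions), whereas you construct $\fontainemap$ from the ramified Witt ring $\witt_{\integerring{\genlocfield}}(\inttilt{\genperfdring})$; these are genuinely different rings when $\genlocfield \neq \Q_p$, and while both maps exist, the one being quoted here is the former, with $p$ rather than $\uniformizer$ playing the structural role throughout. Second, your candidate generator $\xi := \uniformizer - u^{-1}[\pi^\flat]$ is ill-formed: $u^{-1}$ lives in $\genperfdring^+$, not in the Witt ring, so this expression is not an element of $\witt(\inttilt{\genperfdring})$. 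The standard fix (Kedlaya--Liu, Fontaine) is to work with a \emph{primitive element of degree one}, namely an element $\xi = \sum_{n \geq 0} [c_n] p^n$ with $c_0$ a topologically nilpotent non-zero-divisor and $c_1$ a unit of $\inttilt{\genperfdring}$; one then proves that such an element is automatically a non-zero-divisor, that $\ker\fontainemap$ contains one (this is where the perfectoid hypothesis, i.e.\ the approximation of $p$ by $p$-th powers in $\genperfdring^+$, enters), and that any primitive element of degree one in the kernel generates it. Third, the closing remark about reducing to the perfectoid-field case does not reflect the actual proof: there is no reduction from perfectoid affinoid algebras to perfectoid fields here, and Kedlaya--Liu treat the relative statement directly by approximation arguments in the Witt ring.
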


\begin{defn}
Let $(\genperfdring, \genperfdring^+)$ be a perfectoid affinoid algebra over $\genlocfield$. 
\begin{enumerate}[label=(\arabic*)]
\item We refer to the map $\fontainemap_{\genperfdring, \genperfdring^+}$ in Proposition \ref{relative fontaine map} as the \emph{Fontaine map} for $(\genperfdring, \genperfdring^+)$
\smallskip

\item We define the \emph{positive de Rham ring} associated to $\genperfdring$, denoted by $\BdR^+(\genperfdring)$, to be the completion of $\witt(\inttilt{\genperfdring})[1/p]$ with respect to the kernel of $\fontainemap_{\genperfdring, \genperfdring^+}$. 
\smallskip

\item We fix an element $\gendRunif_{\genperfdring, \genperfdring^+} \in \witt(\inttilt{\genperfdring})$ which generates the kernel of $\fontainemap_{\genperfdring, \genperfdring^+}$, and define the \emph{de Rham ring} associated to $\genperfdring$ by $\BdR(\genperfdring):= \BdR^+(\genperfdring)[1/\gendRunif_{\genperfdring, \genperfdring^+}]$.
\smallskip

\item For $\genperfdring = \FFuntilt$, we refer to $\BdR:= \BdR(\FFuntilt)$ and $\BdR^+:= \BdR^+(\FFuntilt)$ respectively as the \emph{de Rham period ring} and the \emph{positive de Rham period ring}, and write $\gendRunif:= \gendRunif_{\FFuntilt, \integerring{\FFuntilt}}$. 
\end{enumerate}
\end{defn}

\begin{remark}
As the notation suggests, the construction of $\BdR^+(\genperfdring)$ and $\BdR(\genperfdring)$ does not depend on $\genperfdring^+$ or $\gendRunif_{\genperfdring, \genperfdring^+}$. 
\end{remark}

\begin{prop}[{\cite[Proposition 2.17]{Fontaine_BdR}}]\label{BdR as a discrete valued field} 
The ring $\BdR$ is a discretely valued field with $\BdR^+$ as the valuation ring and $\FFuntilt$ as the residue field. 
\end{prop}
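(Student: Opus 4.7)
The plan is to verify directly from the construction that $\BdR^+$ is a complete discrete valuation ring with uniformizer $\gendRunif$ and residue field $\FFuntilt$, whence $\BdR = \BdR^+[1/\gendRunif]$ is its fraction field, a discretely valued field with valuation ring $\BdR^+$. The key leverage is the principality of $\ker(\fontainemap_{\FFuntilt, \integerring{\FFuntilt}})$ from Proposition \ref{relative fontaine map}.

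First, I would check that $\gendRunif$ is a nonzerodivisor in $\witt(\inttilt{\FFuntilt})[1/p]$. Since $\inttilt{\FFuntilt}$ is the valuation ring of the perfectoid field $\tilt{\FFuntilt}$, it is a perfect domain, so $\witt(\inttilt{\FFuntilt})$ is a $p$-torsion free domain and hence so is $\witt(\inttilt{\FFuntilt})[1/p]$. Inverting $p$ in the exact sequence $0 \to (\gendRunif) \to \witt(\inttilt{\FFuntilt}) \to \integerring{\FFuntilt} \to 0$ yields
\[ 0 \longrightarrow (\gendRunif) \longrightarrow \witt(\inttilt{\FFuntilt})[1/p] \longrightarrow \FFuntilt \longrightarrow 0,\]
which identifies the quotient $\witt(\inttilt{\FFuntilt})[1/p]/(\gendRunif)$ with $\FFuntilt$.

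Next, I would study the $\gendRunif$-adic filtration. The core claim is that multiplication by $\gendRunif$ induces an isomorphism $(\gendRunif^n)/(\gendRunif^{n+1}) \stackrel{\sim}{\longrightarrow} \witt(\inttilt{\FFuntilt})[1/p]/(\gendRunif) = \FFuntilt$ for every $n \geq 0$; this follows from the fact that $\gendRunif$ is a nonzerodivisor. Passing to the $\gendRunif$-adic completion $\BdR^+$, the successive quotients are preserved, so the associated graded is the polynomial ring $\FFuntilt[\gendRunif]$, which implies that $\gendRunif$ remains a nonzerodivisor in $\BdR^+$ and that $\bigcap_n \gendRunif^n \BdR^+ = 0$. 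In particular, $\BdR^+/\gendRunif\BdR^+ \cong \FFuntilt$.

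From here the DVR structure is formal. Any $x \in \BdR^+$ not in $\gendRunif\BdR^+$ maps to a unit in the field $\FFuntilt$, so $x$ is a unit in $\BdR^+$ by the standard geometric series argument in a $\gendRunif$-adically complete ring; thus $\BdR^+$ is local with maximal ideal $\gendRunif\BdR^+$. Combined with $\bigcap_n \gendRunif^n \BdR^+ = 0$, every nonzero element $x \in \BdR^+$ has a unique expression $x = \gendRunif^n u$ with $u \in (\BdR^+)^\times$, so $\BdR^+$ is a complete DVR with uniformizer $\gendRunif$ and residue field $\FFuntilt$. Inverting $\gendRunif$ therefore produces the fraction field $\BdR$, a discretely valued field with valuation ring $\BdR^+$, as desired.

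The main obstacle is the graded analysis of the $\gendRunif$-adic filtration: one needs to know that $\gendRunif$ remains a nonzerodivisor after completion and that the filtration is separated, which together force the polynomial structure on the associated graded. Once this is in hand, the remaining facts (locality, DVR structure, identification of the residue field) are standard consequences of $\gendRunif$-adic completeness.
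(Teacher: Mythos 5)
The paper does not prove this proposition; it is stated with a citation to Fontaine's work \cite{Fontaine_BdR}, so there is no in-paper argument to compare against. Your direct verification is correct and is the standard argument: you rightly reduce everything to the fact that $\gendRunif$ is a nonzerodivisor in $\witt(\inttilt{\FFuntilt})[1/p]$ (which follows since $\witt$ of a perfect domain is a domain), use this to identify the successive graded pieces of the $\gendRunif$-adic filtration with $\FFuntilt$, observe that completion preserves these quotients because multiplication by $\gendRunif$ gives compatible isomorphisms $A/\gendRunif^{n-1}A \cong \gendRunif A/\gendRunif^n A$ throughout the inverse system, and then invoke the standard criterion (complete, separated, residue ring a field) to conclude that $\BdR^+$ is a complete DVR. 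One point you gloss over slightly is that for the ``successive quotients are preserved'' step to go through without Noetherian or Artin--Rees hypotheses, one should explicitly verify that $\gendRunif$ remains a nonzerodivisor on the completion and that $\hat A/\gendRunif\hat A \to A/\gendRunif A$ is an isomorphism; both do hold by a direct check on compatible sequences using the nonzerodivisor property, so the gap is only expository, not mathematical.
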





\begin{defn}
The \emph{$\BdR^+$-Grassmannian} is the functor $\Gr_G$ which associates to each perfectoid affinoid $\genlocfield$-algebra $(\genperfdring, \genperfdring^+)$ the set of pairs $(\gentorsor, \gentrivialization)$ consisting of a $G$-torsor $\gentorsor$ on $\Spec(\BdR^+(\genperfdring))$ and a trivialization $\gentrivialization$ of $\gentorsor$ over $\Spec(\BdR(\genperfdring))$. For simplicity, we write $\Gr_G(\FFuntilt):= \Gr_G(\FFuntilt, \integerring{\FFuntilt})$. 
\end{defn}


\begin{prop}[{\cite[Proposition 19.1.2]{SW_berkeley}}]\label{points of BdR-Grassmannian}
There exists a canonical identification 
\[\Gr_G(\FFuntilt) \cong G(\BdR)/G(\BdR^+).\]
\end{prop}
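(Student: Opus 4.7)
The plan is to exhibit the bijection explicitly, reducing the claim to the triviality of $G$-torsors on $\Spec(B_{\dR}^+)$ together with the observation that $G(B_{\dR}^+)$ acts simply transitively on the set of trivializations of such a torsor.

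First I would establish that every $G$-torsor on $\Spec(B_{\dR}^+)$ is trivial. By Proposition \ref{BdR as a discrete valued field}, $B_{\dR}^+$ is a complete discrete valuation ring with uniformizer $\gendRunif$ and residue field $\FFuntilt$, which is an algebraically closed field of characteristic zero. Steinberg's theorem then provides triviality of any $G$-torsor over $\Spec(\FFuntilt)$. For each $n \geq 1$, the obstruction to lifting a trivialization from $B_{\dR}^+/\gendRunif^{n-1}$ to $B_{\dR}^+/\gendRunif^n$ lives in an $H^1$ of a square-zero thickening with coefficients in $\Lie(G)$, which vanishes on an affine scheme by smoothness of $G$; hence the lift is unobstructed at each step. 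Passing to the inverse limit using $\gendRunif$-adic completeness of $B_{\dR}^+$, together with the algebraization of formal $G$-bundles over the formal spectrum of a complete local Noetherian ring, yields triviality over $\Spec(B_{\dR}^+)$.

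Given a pair $(\gentorsor, \gentrivialization)$, pick any trivialization $\alpha\colon \gentorsor \cong G_{\Spec(B_{\dR}^+)}$. The composite $\gentrivialization \circ \alpha^{-1}$ is an automorphism of the trivial $G$-torsor over $\Spec(B_{\dR})$, and therefore is given by left multiplication by a unique element $g \in G(B_{\dR})$. Replacing $\alpha$ by $h \alpha$ for $h \in G(B_{\dR}^+)$ replaces $g$ by $g h^{-1}$, so the coset $g \cdot G(B_{\dR}^+)$ is a well-defined invariant of the isomorphism class of $(\gentorsor, \gentrivialization)$. Conversely, every coset arises in this way by taking $\gentorsor = G_{\Spec(B_{\dR}^+)}$ with $\gentrivialization$ equal to left multiplication by a representative, producing the mutually inverse constructions that establish the claimed bijection.

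The main obstacle is the triviality of $G$-torsors on $\Spec(B_{\dR}^+)$. While each successive lift $B_{\dR}^+/\gendRunif^{n-1} \to B_{\dR}^+/\gendRunif^n$ is unobstructed by the deformation-theoretic argument above, the algebraization step at the inverse limit requires invoking a nontrivial structural input, namely effectivity of formal $G$-bundles for a connected reductive $G$ over a complete Noetherian local ring with algebraically closed residue field.
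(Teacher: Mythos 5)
The paper does not give its own proof of this proposition; it imports it from Scholze--Weinstein (Proposition 19.1.2 of the Berkeley lecture notes), so there is no internal argument to compare your proposal against. Your reconstruction is nonetheless essentially correct: the second paragraph, identifying the fiber of the forgetful map $(\gentorsor,\gentrivialization)\mapsto\gentorsor$ with a $G(\BdR^+)$-coset once triviality of $\gentorsor$ is known, is exactly the standard bookkeeping.

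Where the argument is heavier than necessary is the triviality of $G$-torsors over $\Spec(\BdR^+)$, and the ``algebraization'' step you flag as the main obstacle is not actually an obstacle. Two remarks. First, since $G$ is affine, any $G$-torsor $\gentorsor$ over $\Spec(\BdR^+)$ is representable by an affine $\BdR^+$-scheme (affine morphisms satisfy effective fppf descent), so a compatible system of sections modulo $\gendRunif^n$ for all $n$ produces a genuine $\BdR^+$-point by passing to the limit at the level of ring homomorphisms; no effectivity statement for formal torsors is needed. Second, and more directly, one can discard the deformation-theoretic tower altogether: by Proposition \ref{BdR as a discrete valued field}, $\BdR^+$ is a complete discrete valuation ring, hence Henselian, with algebraically closed residue field $\FFuntilt$, hence strictly Henselian. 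The torsor $\gentorsor\to\Spec(\BdR^+)$ is smooth and surjective (being fppf-locally isomorphic to $G$), and it has an $\FFuntilt$-point since $H^1_{\et}(\FFuntilt,G)$ is trivial for $\FFuntilt$ algebraically closed (no appeal to Steinberg's theorem is needed here); Hensel's lemma for smooth morphisms then lifts this to a $\BdR^+$-point, i.e.\ a trivialization of $\gentorsor$.
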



\begin{prop}[{\cite[Corollary 19.3.4]{SW_berkeley}}]\label{schubert cells as diamonds}
Let $\gencocharacter$ be a dominant cocharacter of $G$. There exists a locally spatial diamond $\Gr_{G, \gencocharacter}$ with
\[ \Gr_{G, \gencocharacter}(\FFuntilt) =  G(\BdR^+)\gencocharacter(\gendRunif)^{-1}G(\BdR^+)/G(\BdR^+).\]
\end{prop}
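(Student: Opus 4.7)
The plan is to first define $\Gr_{G, \gencocharacter}$ as a subfunctor of $\Gr_G$ and compute its $\FFuntilt$-points via a Cartan decomposition for $G(\BdR)$, then establish representability as a locally spatial diamond by reducing to the case $G = \GL_n$. Functorially, for a test perfectoid affinoid $(\genperfdring, \genperfdring^+)$ I would let $\Gr_{G, \gencocharacter}(\genperfdring, \genperfdring^+)$ consist of those pairs $(\gentorsor, \gentrivialization) \in \Gr_G(\genperfdring, \genperfdring^+)$ whose relative position at every geometric point equals the Weyl orbit of $\gencocharacter$. The relative position is well-defined pointwise because, over any geometric point $\FFuntilt'$, the ring $\BdR(\FFuntilt')$ is a complete discretely valued field with algebraically closed residue field by Proposition \ref{BdR as a discrete valued field}, so the classical theory of invariant factors for modifications of $G$-torsors applies.

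Next I would compute the $\FFuntilt$-points via the Cartan decomposition. Since $\BdR$ is a complete discretely valued field with algebraically closed residue field $\FFuntilt$ and $G$ is reductive, the classical Cartan decomposition gives
\[
G(\BdR) = \bigsqcup_{\gencocharacter' \in \cocharlatt^+} G(\BdR^+)\, \gencocharacter'(\gendRunif)^{-1}\, G(\BdR^+),
\]
where $\gencocharacter'$ ranges over dominant cocharacters of $G$. Combined with the identification $\Gr_G(\FFuntilt) \cong G(\BdR)/G(\BdR^+)$ from Proposition \ref{points of BdR-Grassmannian}, this yields the desired description of $\Gr_{G, \gencocharacter}(\FFuntilt)$ as the single double coset indexed by $\gencocharacter$, and in particular confirms nonemptiness.

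For representability, I would first handle $G = \GL_n$, where $\Gr_{\GL_n}$ has a concrete description as the moduli of $\BdR^+$-lattices in $\BdR^n$ and $\Gr_{\GL_n, \gencocharacter}$ picks out lattices with prescribed elementary divisors determined by $\gencocharacter$. This subfunctor can be presented as the orbit of the base point $\gencocharacter(\gendRunif)^{-1}$ under the positive loop group of $\GL_n$, and a perfectoid presentation of that loop group, together with a computation of the stabilizer, exhibits the orbit as a locally spatial diamond. For general reductive $G$, I would fix a faithful representation $G \hookrightarrow \GL_N$, use that the induced map $\Gr_G \hookrightarrow \Gr_{\GL_N}$ is a closed immersion of v-sheaves, and cut out $\Gr_{G, \gencocharacter}$ inside the preimage of the relevant Schubert locus for $\GL_N$ by imposing the pointwise relative position condition. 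The hard part is this last step: rigorously verifying that the loop group orbit carries the structure of a locally spatial diamond in the $\GL_n$ case and that this structure descends along a faithful representation to an arbitrary reductive $G$, which is the technical core of the Scholze-Weinstein construction.
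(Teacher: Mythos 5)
The paper does not give its own proof of this proposition; it is quoted directly from Scholze--Weinstein (Corollary 19.3.4), so there is no in-paper argument to compare against. Your computation of $\Gr_{G,\gencocharacter}(\FFuntilt)$ via the Cartan decomposition is exactly right: $\BdR^+$ is a complete discrete valuation ring with algebraically closed residue field $\FFuntilt$ (Proposition \ref{BdR as a discrete valued field}), so $G_{\BdR^+}$ is a split reductive group scheme, $G(\BdR^+)$ is a hyperspecial maximal compact in $G(\BdR)$, and the double coset decomposition of $G(\BdR)$ indexed by dominant cocharacters follows; together with Proposition \ref{points of BdR-Grassmannian} this gives the stated description of the $\FFuntilt$-points.

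Where you diverge from the source is in how the locally spatial diamond structure is established. You propose to present $\Gr_{G,\gencocharacter}$ directly as the orbit of the base point $\gencocharacter(\gendRunif)^{-1}$ under the positive loop group and exhibit a perfectoid presentation of that orbit. Scholze--Weinstein instead first prove that the \emph{bounded} Schubert variety $\Gr_{G,\leq\gencocharacter}$ is a closed subfunctor of $\Gr_{G}$ which is a proper spatial diamond over $\Spd(\FFuntilt)$: for $\GL_n$ one cuts out the lattice condition $\gendRunif^{c}\,\BdR^+(\genperfdring)^{n}\subseteq\Lambda\subseteq\gendRunif^{-c}\,\BdR^+(\genperfdring)^{n}$ for suitable $c$ as a closed condition and presents the result as a quotient of an affinoid perfectoid space; the general case is pulled back along the closed immersion $\Gr_{G}\hookrightarrow\Gr_{\GL_N}$ induced by a faithful representation. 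The open cell $\Gr_{G,\gencocharacter}$ is then the complement in $\Gr_{G,\leq\gencocharacter}$ of the finite union of the closed substrata $\Gr_{G,\leq\gencocharacter'}$ with $\gencocharacter'<\gencocharacter$, hence is open in a spatial diamond and therefore locally spatial. This closed-then-open route is both what the cited reference actually does and the cleanest way to carry out the ``hard part'' you flag: the positive loop group is only a pro-object in v-sheaves, not itself a (spatial) diamond, so making the orbit presentation rigorous and verifying that the quotient inherits the diamond property is considerably more delicate than passing to an open subfunctor of a spatial diamond.
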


\begin{remark}
We will not use the language of diamonds in a significant way as we will be mainly interested in the $\FFuntilt$-valued points of $\Gr_G$ and $\Gr_{G, \gencocharacter}$.
\end{remark}

\begin{defn}
Given a dominant cocharacter $\gencocharacter$ of $G$, we refer to the locally spatial diamond $\Gr_{G, \gencocharacter}$ given by Proposition \ref{schubert cells as diamonds} as the \emph{Schubert cell} of $\Gr_G$ associated to $\gencocharacter$. 
\end{defn}

\begin{prop}[{\cite[Th\'eor\`emes 6.5.2 and 7.3.3]{FF_curve}}]\label{distinguished closed pt on FF curve}
There exists a closed point $\FFclosedpt$ on $\schff$ satisfying the following properties:
\begin{enumerate}[label=(\roman*)]
\item $\schff - \FFclosedpt$ is isomorphic to $\Spec(\Be)$ for some principal ideal domain $\Be \subseteq \BdR$. 
\smallskip

\item The completed local ring at $\FFclosedpt$ is naturally isomorphic to $\BdR^+$. 
\smallskip

\item Every $G$-bundle on $\schff$ pulls back to a trivial $G$-bundle via the map $\Spec(\BdR^+) \to \schff$ induced by $\FFclosedpt$. 
\end{enumerate}
\end{prop}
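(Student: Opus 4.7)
The plan is to construct $\FFclosedpt$ explicitly from the fixed untilt and then read off each property, invoking the cited results of Fargues-Fontaine \cite{FF_curve} where needed.

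First, I would produce the point $\FFclosedpt$. Since $\FFuntilt$ is an untilt of $\tilt{\FFuntilt}$, the Fontaine map $\fontainemap_{\FFuntilt, \integerring{\FFuntilt}}: \witt(\integerring{\tilt{\FFuntilt}}) \surj \integerring{\FFuntilt}$ of Proposition \ref{relative fontaine map} has principal kernel, generated by some element $\xi \in \witt_{\integerring{\genlocfield}}(\integerring{\tilt{\FFuntilt}})$. The vanishing locus of $\xi$ defines a closed point $y \in \Ycal$. Since $\genfrob$ acts properly discontinuously on $\Ycal$ and its translates $\genfrob^n(y)$ correspond to pairwise distinct untilts, the image of $y$ in $\adicff = \Ycal/\genfrob^\Z$ is a single closed point. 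Transporting this through the equivalence of Theorem \ref{geom properties of FF curve}\ref{GAGA for FF curve} yields the desired closed point $\FFclosedpt$ on $\schff$.

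For part (ii), I would compute directly: the local ring $\Ocal_{\schff, \FFclosedpt}$ is a discrete valuation ring obtained by localizing $\witt_{\integerring{\genlocfield}}(\integerring{\tilt{\FFuntilt}})[1/p]$ at $(\xi)$, and its $\xi$-adic completion is by construction $\BdR = \BdR(\FFuntilt)$'s ring of integers $\BdR^+$, giving the canonical isomorphism claimed. For part (i), I would use that $\schff$ is a Dedekind scheme (Theorem \ref{geom properties of FF curve}), so $\schff - \FFclosedpt$ is an affine Dedekind scheme $\Spec(\Be)$ whose fraction field embeds into $\BdR$ via the stalk at $\FFclosedpt$. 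To upgrade ``Dedekind domain'' to ``PID'', I would invoke the ``genus zero'' property of the Fargues-Fontaine curve established in \cite[Th\'eor\`eme 6.5.2]{FF_curve}: every degree-zero line bundle on $\schff$ is trivial, hence $\Pic(\schff - \FFclosedpt) = 0$, forcing $\Be$ to be a PID.

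For part (iii), let $\genvb$ be any $G$-bundle on $\schff$, and consider its pullback to $\Spec(\BdR^+)$. By Proposition \ref{BdR as a discrete valued field}, $\BdR^+$ is a complete discrete valuation ring with residue field the algebraically closed perfectoid field $\FFuntilt$. Over $\FFuntilt$, Steinberg's theorem implies that every $G$-torsor is trivial. Since $G$ is smooth over $\genlocfield$, the scheme of trivializations of $\genvb\vert_{\Spec(\BdR^+)}$ is smooth over $\Spec(\BdR^+)$, so Hensel's lemma lifts any trivialization over the residue field to one over $\BdR^+$ itself.

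The main obstacle I anticipate is part (i), specifically upgrading the Dedekind property to the PID property. All the heavy machinery from \cite{FF_curve} -- the computation of $\Pic(\schff)$ via the degree map, and the classification of vector bundles that underpins the ``genus zero'' statement -- is concentrated at this step; everything else reduces to unwinding definitions and applying standard smoothness/Hensel arguments.
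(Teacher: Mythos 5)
The paper provides no proof of this proposition; it is stated purely as a citation to Fargues--Fontaine, so there is no internal argument to compare against. Your sketch is a plausible reconstruction of the FF argument, and the overall logic is sound: the distinguished closed point is cut out by the kernel of the Fontaine map, the isomorphism $\Pic(\schff)\cong\Z$ via the degree map forces $\Be$ to have trivial class group and hence to be a PID, and triviality of $G$-bundles over $\Spec(\BdR^+)$ follows from the torsor being trivial over the algebraically closed residue field $\FFuntilt$ together with lifting of sections over the complete (hence Henselian) local ring $\BdR^+$ for the smooth group $G$. Two points deserve a caveat, however. First, in part (ii) you say the local ring $\Ocal_{\schff,\FFclosedpt}$ ``is a discrete valuation ring obtained by localizing $\witt_{\integerring{\genlocfield}}(\integerring{\tilt{\FFuntilt}})[1/p]$ at $(\xi)$,'' but $\schff$ is a $\Proj$ of a graded ring of $\genfrob$-invariants, not the $\Spec$ of that ring, so the local ring at $\FFclosedpt$ is a priori the degree-zero part of a graded localization and not literally that localization; identifying its completion with $\BdR^+$ is a genuine computation carried out in Fargues--Fontaine, not something that holds ``by construction.'' Second, the passage from a closed point of $\adicff$ to one of $\schff$ is not an application of the GAGA statement in Theorem~\ref{geom properties of FF curve}, which only concerns vector bundles; it is a consequence of the underlying morphism of locally ringed spaces $\adicff\to\schff$, or, more cleanly, of the Kedlaya--Liu identification of closed points of $\schff$ with equivalence classes of untilts that the paper invokes in the remark following the proposition.
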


\begin{defn}
We fix a closed point $\FFclosedpt$ on $\schff$ given by Proposition \ref{distinguished closed pt on FF curve}, and refer to $\FFclosedpt$ as the \emph{distinguished closed point} on $\schff$. 
\end{defn}

\begin{remark}
Our terminology is justified by the fact that there is a canonical choice of $\FFclosedpt$. The key technical fact is that closed points on $\schff$ are naturally in bijection with the equivalence classes of untilts of $\tilt{\FFuntilt}$, as shown by Kedlaya-Liu \cite[Theorem 8.7.7]{KL_relpadic1}. Our canonical choice of $\FFclosedpt$ corresponds to the identity map on $\tilt{\FFuntilt}$.
\end{remark}

\begin{prop}\label{Beauville-Laszlo for FF curve}
The set $H_{\et}^1(\schff, G)$ is canonically in bijection with the set of isomorphism classes of triples $(\punctuation{\genvb}, \completion{\genvb}, \gentrivialization)$ where
\begin{itemize}
\item $\punctuation{\genvb}$ is a $G$-bundle on $\schff - \FFclosedpt$,
\smallskip

\item $\completion{\genvb}$ is a trivial $G$-bundle on $\Spec(\BdR^+)$, and
\smallskip

\item $\gentrivialization$ is a gluing map of $\punctuation{\genvb}$ and $\completion{\genvb}$ over $\Spec(\BdR)$. 
\end{itemize}
\end{prop}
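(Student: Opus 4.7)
The plan is to apply the Beauville-Laszlo glueing theorem to the covering of $\schff$ by the open subscheme $\schff - \FFclosedpt$ and the formal neighborhood of $\FFclosedpt$, and then invoke Proposition \ref{distinguished closed pt on FF curve}\,(iii) to force the completed $G$-bundle to be trivial.

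First I would establish the corresponding glueing statement for vector bundles. Since $\schff$ is a Dedekind scheme by Theorem \ref{geom properties of FF curve}\,(2), the pair consisting of $\schff - \FFclosedpt$ and $\Spec(\widehat{\Ocal}_{\schff, \FFclosedpt})$ fits into the classical Beauville-Laszlo framework. By Proposition \ref{distinguished closed pt on FF curve} we have $\schff - \FFclosedpt \cong \Spec(\Be)$ and $\widehat{\Ocal}_{\schff, \FFclosedpt} \cong \BdR^+$, whose fraction field is $\BdR$ by Proposition \ref{BdR as a discrete valued field}. This yields a natural equivalence between the category of vector bundles on $\schff$ and the category of triples consisting of a vector bundle on $\Spec(\Be)$, a vector bundle on $\Spec(\BdR^+)$, and a glueing isomorphism over $\Spec(\BdR)$.

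Next I would upgrade this to $G$-bundles via the Tannakian description: a $G$-bundle on a scheme $Y$ is the same as an exact tensor functor from $\Rep_\genlocfield(G)$ to vector bundles on $Y$. Since the Beauville-Laszlo equivalence above respects tensor products, duals, and short exact sequences, it induces termwise an equivalence between $G$-bundles on $\schff$ and triples $(\punctuation{\genvb}, \completion{\genvb}, \gentrivialization)$ where $\punctuation{\genvb}$ is a $G$-bundle on $\schff - \FFclosedpt$, $\completion{\genvb}$ is a $G$-bundle on $\Spec(\BdR^+)$, and $\gentrivialization$ glues the two over $\Spec(\BdR)$.

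Finally, I would invoke Proposition \ref{distinguished closed pt on FF curve}\,(iii): every $G$-bundle on $\schff$ pulls back to a trivial $G$-bundle on $\Spec(\BdR^+)$. Hence in every triple arising from the equivalence the middle component $\completion{\genvb}$ is a trivial $G$-bundle, and conversely any triple of the form described in the statement glues to a $G$-bundle on $\schff$. Passing to isomorphism classes and identifying $H_{\et}^1(\schff, G)$ with isomorphism classes of $G$-bundles on $\schff$ yields the claimed bijection. The main obstacle I anticipate is the promotion from vector bundles to $G$-bundles: the Tannakian route is clean but requires a careful check that the vector-bundle equivalence is compatible with tensor structure and exactness, and an alternative argument via an embedding $G \inj \GL_N$ together with faithfully flat descent for the associated torsors requires additional care because $\Spec(\Be)$ is not quasi-compact in general.
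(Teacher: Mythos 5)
Your proof is correct and takes the same route the paper does: the paper's entire proof is the one-line citation of Beauville--Laszlo, and you are simply unpacking that citation, applying it first to vector bundles and then promoting to $G$-bundles via the Tannakian description, with Proposition~\ref{distinguished closed pt on FF curve}\,(iii) supplying the triviality of $\completion{\genvb}$. One small correction to your closing remark: $\Spec(\Be)$ \emph{is} quasi-compact --- the spectrum of any commutative ring is --- so that is not an obstacle to the $G \hookrightarrow \GL_N$ plus descent route; the genuine subtlety there is rather that $\Be$ is a non-Noetherian PID of infinite Krull dimension zero... no, of dimension one but with enormous class of primes, and one has to be careful about which descent statements for torsors apply, which is why the Tannakian route is cleaner.
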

\begin{proof}
The assertion is evident by the theorem of Beauville-Laszlo \cite{BL_beauvillelaszlothm}. 
\end{proof}

\begin{defn}\label{modification of G-bundle on FF curve}
Let $\genvb$ be a $G$-bundle on $\schff$. By a \emph{modification} of $\genvb$ at $\FFclosedpt$, we mean a $G$-bundle $\modification{\genvb}$ on $\schff$ together with an isomorphism between the restrictions of $\genvb$ and $\modification{\genvb}$ on $\schff - \FFclosedpt$. 
\end{defn}

\begin{example}\label{modification given by a point on BdR+ Grassmannian}
Let us consider an element $b \in B(G)$ and a point $x \in \Gr_G(\FFuntilt)$. We have $x = gG(\BdR^+)$ for some $g \in G(\BdR)$ under the identification $\Gr_G(\FFuntilt) \cong G(\BdR)/G(\BdR^+)$ given by Proposition \ref{points of BdR-Grassmannian}. In light of Proposition \ref{Beauville-Laszlo for FF curve} we take a triple $(\punctuation{\genvb}, \completion{\genvb}, \gentrivialization)$ corresponding to $\genvb_b$ and a $G$-bundle $\genvb_{b, x}$ on $\schff$ corresponding to $(\punctuation{\genvb}, \completion{\genvb}, g\gentrivialization)$. 
By construction, 
$\genvb_{b, x}$ is a modification of $\genvb_b$ at $\FFclosedpt$. 
\end{example}

\begin{remark}
It is not hard to see that the isomorphism type of $\genvb_{b, x}$ does not depend on the choice of $g \in G(\BdR)$.
\end{remark}

\begin{defn}\label{def of newton stratification}
Consider an element $b \in B(G)$ and a dominant cocharacter $\gencocharacter$ of $G$. 
\begin{enumerate}[label=(\arabic*)]
\item For every $x \in \Gr_G(\FFuntilt)$, we refer to the $G$-bundle $\genvb_{b, x}$ in Example \ref{modification given by a point on BdR+ Grassmannian} as the \emph{modification of $\genvb_b$ at $\FFclosedpt$ induced by $x$}. 
\smallskip

\item For every $\modification{b} \in B(G)$, we define the associated \emph{Newton stratum with respect to $b$} in $\Gr_{G, \gencocharacter}$ to be the subdiamond $\Gr_{G, \gencocharacter, b}^{\modification{b}}$ of $\Gr_{G, \gencocharacter}$ with
\[\Gr_{G, \gencocharacter, b}^{\modification{b}}(\FFuntilt) = \{x \in \Gr_{G, \gencocharacter}(\FFuntilt): \genvb_{b, x} \simeq \genvb_{\modification{b}}\}.\]
\end{enumerate}
\end{defn}

\begin{remark}
The subdiamond $\Gr_{G, \gencocharacter, b}^{\modification{b}}$ of $\Gr_{G, \gencocharacter}$ is uniquely determined by its set of $\FFuntilt$-points since $\Gr_{G, \gencocharacter}$ is a locally spatial diamond.
\end{remark}

\begin{prop}\label{nonemptiness of newton strata via points}
Let $\gencocharacter$ be a dominant cocharacter of $G$. Given two elements $b, \modification{b} \in B(G)$, the Newton stratum $\Gr_{G, \gencocharacter, b}^{\modification{b}}$ is not empty if and only if it has a $\FFuntilt$-valued point. 
\end{prop}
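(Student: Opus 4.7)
The plan is to derive this proposition essentially for free from the remark immediately preceding its statement, which records that any subdiamond of $\Gr_{G, \gencocharacter}$ is uniquely determined by its set of $\FFuntilt$-valued points, due to $\Gr_{G, \gencocharacter}$ being a locally spatial diamond.

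First I would dispose of the trivial ``if'' direction: if $\Gr_{G, \gencocharacter, b}^{\modification{b}}$ admits an $\FFuntilt$-valued point, then it is nonempty tautologically.

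For the converse, I would argue by contraposition. Suppose $\Gr_{G, \gencocharacter, b}^{\modification{b}}(\FFuntilt) = \emptyset$. Then this subdiamond of $\Gr_{G, \gencocharacter}$ and the empty subdiamond of $\Gr_{G, \gencocharacter}$ have the same (empty) set of $\FFuntilt$-valued points. Invoking the uniqueness principle from the remark, the two subdiamonds must coincide, so $\Gr_{G, \gencocharacter, b}^{\modification{b}}$ is itself empty.

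The only non-trivial ingredient in this strategy is precisely the uniqueness principle borrowed from the remark, which itself rests on the general fact that $\FFuntilt$-valued points of a locally spatial diamond surject onto its underlying topological space; this would be the main obstacle if one wished to reprove the statement from scratch rather than treating it as a black box from the locally spatial diamond formalism, but here it may simply be cited.
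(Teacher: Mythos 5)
Your proposal is correct and coincides in substance with the paper's own (very terse) proof, which reads simply ``The assertion is evident by definition.'' The ``definition'' in question is Definition \ref{def of newton stratification}, which pins down $\Gr_{G, \gencocharacter, b}^{\modification{b}}$ as a subdiamond of $\Gr_{G, \gencocharacter}$ by prescribing its set of $\FFuntilt$-valued points, together with the remark that this prescription determines the subdiamond uniquely because $\Gr_{G, \gencocharacter}$ is a locally spatial diamond; your contrapositive argument (empty set of $\FFuntilt$-points $\Rightarrow$ coincides with the empty subdiamond) is exactly the content unpacked.
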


\begin{proof}
The assertion is evident by definition. 
\end{proof}

\begin{prop}\label{functoriality of newton strata}
Let $f: G \inj \genlift{G}$ be an embedding of reductive groups over $\genlocfield$. We take elements $b, \modification{b} \in B(G)$ and write $\genlift{b}, \genlift{\modification{b}}$ for their images under the map $B(G) \longrightarrow B(\genlift{G})$ induced by $f$. In addition, we consider a dominant cocharacter $\gencocharacter$ of $G$ and denote by $\genlift{\gencocharacter}$ the dominant cocharacter of $\genlift{G}$ associated to $f \circ \gencocharacter$. There exists a natural map
\[ \Gr_{G, \gencocharacter, b}^{\modification{b}}(\FFuntilt) \longrightarrow \Gr_{\genlift{G}, \genlift{\gencocharacter}, \genlift{b}}^{\modification{\genlift{b}}}(\FFuntilt).\]
\end{prop}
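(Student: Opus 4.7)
The plan is to construct the map at the level of $\FFuntilt$-valued points via functoriality, in three steps.

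First, using the identification $\Gr_G(\FFuntilt) \cong G(\BdR)/G(\BdR^+)$ from Proposition \ref{points of BdR-Grassmannian}, I would define the natural map $\Gr_G(\FFuntilt) \to \Gr_{\genlift{G}}(\FFuntilt)$ by sending the class of $g \in G(\BdR)$ to the class of $f(g) \in \genlift{G}(\BdR)$. This is well-defined since $f$ restricts to a homomorphism $G(\BdR^+) \to \genlift{G}(\BdR^+)$, so the coset of $f(g)$ is independent of the chosen representative $g$.

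Second, I would verify that this map restricts to $\Gr_{G, \gencocharacter}(\FFuntilt) \to \Gr_{\genlift{G}, \genlift{\gencocharacter}}(\FFuntilt)$. By construction, $\genlift{\gencocharacter}$ is the unique dominant representative of the $\genlift{G}$-conjugacy class of $f \circ \gencocharacter$, so these two cocharacters are related by an element of the Weyl group of $\genlift{G}$ with respect to a maximal torus of $\genlift{G}$ containing $f(\genmaxtorus)$. Such a Weyl group element can be lifted to $\genlift{G}(\genlocfield) \subseteq \genlift{G}(\BdR^+)$, so $\genlift{\gencocharacter}(\gendRunif)^{-1}$ and $f(\gencocharacter(\gendRunif)^{-1}) = (f \circ \gencocharacter)(\gendRunif)^{-1}$ lie in the same $\genlift{G}(\BdR^+)$-double coset. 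Hence for every $g \in G(\BdR^+) \gencocharacter(\gendRunif)^{-1} G(\BdR^+)$, the image $f(g)$ lies in $\genlift{G}(\BdR^+) \genlift{\gencocharacter}(\gendRunif)^{-1} \genlift{G}(\BdR^+)$, as required by the description in Proposition \ref{schubert cells as diamonds}.

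Third, I would verify compatibility with Newton strata. The pushout functor $\genvb \mapsto f_* \genvb := \genvb \times^G \genlift{G}$ on $G$-bundles over $\schff$ sends $\genvb_b$ to a $\genlift{G}$-bundle isomorphic to $\genvb_{\genlift{b}}$, by naturality of the construction preceding Theorem \ref{classification of G-bundles on FF curve}. Moreover, the modification construction in Example \ref{modification given by a point on BdR+ Grassmannian} commutes with pushout: if the gluing triple $(\undescent{\genvb}, \completion{\genvb}, g \gentrivialization)$ computes $\genvb_{b, x}$, then the pushed-out triple $(f_* \undescent{\genvb}, f_* \completion{\genvb}, f(g) \cdot f_* \gentrivialization)$ simultaneously represents $f_* \genvb_{b, x}$ and $\genvb_{\genlift{b}, \genlift{x}}$, where $\genlift{x}$ denotes the image of $x$ constructed in the first step. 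Therefore an isomorphism $\genvb_{b, x} \simeq \genvb_{\modification{b}}$ yields
\[ \genvb_{\genlift{b}, \genlift{x}} \simeq f_* \genvb_{b, x} \simeq f_* \genvb_{\modification{b}} \simeq \genvb_{\modification{\genlift{b}}}, \]
so $\genlift{x}$ lies in $\Gr_{\genlift{G}, \genlift{\gencocharacter}, \genlift{b}}^{\modification{\genlift{b}}}(\FFuntilt)$, as desired.

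The main subtlety is the second step, where one must reconcile the Weyl-group-based definition of $\genlift{\gencocharacter}$ as the dominant representative of $f \circ \gencocharacter$ with the Cartan-type presentation of $\Gr_{\genlift{G}, \genlift{\gencocharacter}}(\FFuntilt)$ in Proposition \ref{schubert cells as diamonds}; the other two steps amount to direct functoriality of the pushout construction for $G$-bundles.
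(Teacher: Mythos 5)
Your proof is correct and follows essentially the same route as the paper: the paper's argument likewise reduces to the isomorphism chain $\genvb_{\genlift{\modification{b}}} \cong \genlift{\genvb}_{\modification{b}} \simeq \genlift{\genvb}_{b,x} \cong \genvb_{\genlift{b},\genlift{x}}$, with the compatibility of pushout and modification (your third step) as the substantive point, while treating the Schubert-cell compatibility (your second step) as automatic. One small imprecision in your second step: for a general reductive $\genlift{G}$ the Weyl element relating $f\circ\gencocharacter$ to $\genlift{\gencocharacter}$ need not be defined over $\genlocfield$; rather it lifts to $\genlift{G}(\BdR^+)$ because $\genlift{G}_{\BdR^+}$ is split (as $\BdR^+$ is Henselian with algebraically closed residue field of characteristic $0$), which is all the Cartan-type description of $\Gr_{\genlift{G},\genlift{\gencocharacter}}(\FFuntilt)$ requires.
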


\begin{proof}
Let us consider the natural map $\Gr_{G, \gencocharacter}(\FFuntilt) \longrightarrow \Gr_{\genlift{G}, \genlift{\gencocharacter}}(\FFuntilt)$ induced by $f$. It suffices to show that the image of $\Gr_{G, \gencocharacter, b}^{\modification{b}}(\FFuntilt)$ lies in $\Gr_{\genlift{G}, \genlift{\gencocharacter}, \genlift{b}}^{\modification{\genlift{b}}}(\FFuntilt)$. For every $G$-bundle $\genvb$ on $\schff$, we write $\genlift{\genvb}$ for the corresponding $\genlift{G}$-bundle on $\schff$. Given an arbitrary point $x \in \Gr_{G, \gencocharacter, b}^{\modification{b}}(\FFuntilt)$, we denote by $\genlift{x}$ its image in $\Gr_{\genlift{G}}(\FFuntilt)$ and find
\[ \genvb_{\genlift{\modification{b}}} \cong \genlift{\genvb}_{\modification{b}} \simeq \genlift{\genvb}_{b, x} \cong \genvb_{\genlift{b}, \genlift{x}},\]
thereby deducing that $\genlift{x}$ lies in $\Gr_{\genlift{G}, \genlift{\gencocharacter}, \genlift{b}}^{\modification{\genlift{b}}}(\FFuntilt)$ as desired. 
\end{proof}

\begin{defn}
Let us regard elements of $\newtonset{G}$ as dominant rational cocharacters. We define the \emph{Bruhat order} $\leq$ on $\newtonset{G}$ by writing $\gencocharacter \leq \gencocharacter'$ if $\gencocharacter' - \gencocharacter$ is a linear combination of positive coroots with nonnegative coefficients. 
\end{defn}

\begin{prop}\label{functoriality of bruhat order on newton set}
Given a closed embedding $G \inj \genlift{G}$ of reductive groups over $\genlocfield$, the induced map $\newtonset{G} \longrightarrow \newtonset{\genlift{G}}$ is compatible with the Bruhat order. 
\end{prop}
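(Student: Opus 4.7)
The plan is to reduce the claim to the convex-hull description of the Bruhat order. Since the statement concerns the intrinsic poset structure of $\cocharlatt_\Q/\weylgrp$ (the $\absgalgrp$-invariance defining $\newtonset{\cdot}$ is preserved by the map in question), I would first work over an algebraic closure of $\genlocfield$ and choose compatible maximal tori $\genmaxtorus \subseteq \genlift{\genmaxtorus}$ of $G \subseteq \genlift{G}$; this is always possible, since $\genmaxtorus \subseteq \genlift{G}$ extends to a maximal torus of $\genlift{G}$. With such compatible choices, the induced map on Newton sets is read off from the lattice inclusion $\cocharlatt \hookrightarrow X_*(\genlift{\genmaxtorus})$ followed by passage to the $\genlift{G}$-dominant $\genlift{\weylgrp}$-orbit representative.

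Next I would invoke the standard convex-hull characterization of the Bruhat order: for dominant rational cocharacters $\nu, \nu' \in \cocharlatt_\Q$, one has $\nu \leq \nu'$ if and only if $\nu \in \mathrm{conv}(\weylgrp \cdot \nu')$. One direction is immediate: if $\nu = \sum_w c_w\, w\nu'$ with $c_w \geq 0$ and $\sum_w c_w = 1$, then $\nu' - \nu = \sum_w c_w(\nu' - w\nu')$ is a nonnegative combination of positive coroots because $\nu'$ is dominant. The converse is classical, and I would cite it from Bourbaki, \emph{Groupes et alg\`ebres de Lie}, Chapitre VI.

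The crux of the argument is then to establish the inclusion $\weylgrp \cdot \mu' \subseteq \genlift{\weylgrp} \cdot \mu'$ inside $X_*(\genlift{\genmaxtorus})_\Q$. For $w \in \weylgrp$ with a lift $\dot{w} \in N_G(\genmaxtorus) \subseteq \genlift{G}$, the element $w\mu' = \mathrm{Int}(\dot{w}) \circ \mu'$ factors through $\genmaxtorus \subseteq \genlift{\genmaxtorus}$ and is a $\genlift{G}$-conjugate of $\mu'$; the standard bijection between $\genlift{G}$-conjugacy classes of cocharacters and $\genlift{\weylgrp}$-orbits in $X_*(\genlift{\genmaxtorus})$ forces $w\mu' \in \genlift{\weylgrp} \cdot \mu'$. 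Assuming $\mu \leq \mu'$ in the $G$-Bruhat order, the convex-hull criterion yields $\mu \in \mathrm{conv}(\weylgrp \cdot \mu') \subseteq \mathrm{conv}(\genlift{\weylgrp} \cdot \mu')$; since this convex hull is $\genlift{\weylgrp}$-stable, the $\genlift{G}$-dominant representative of $\mu$ also lies in the convex hull of the $\genlift{\weylgrp}$-orbit of the $\genlift{G}$-dominant representative of $\mu'$. A final application of the convex-hull criterion inside $\genlift{G}$ then yields the desired Bruhat inequality in $\newtonset{\genlift{G}}$.

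The main obstacle is the convex-hull characterization itself in the rational reductive setting: the reverse direction is classical but nontrivial, and I would cite a reference rather than reprove it. The remaining ingredients --- the existence of compatible maximal tori and the classification of cocharacter conjugacy classes of $\genlift{G}$ by $\genlift{\weylgrp}$-orbits in $X_*(\genlift{\genmaxtorus})$ --- are routine.
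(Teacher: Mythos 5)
Your argument is correct, and it gives a genuine proof where the paper offers only the one-line remark that the assertion is ``straightforward to verify by definition.'' The paper's implicit stance is that for the groups it actually needs --- $\GL_n$, $\GSp_{2n}$, and their standard Levi subgroups --- the Newton sets are explicit spaces of ordered tuples, the dominance conditions on both sides literally coincide (so there is no passage to a dominant representative), and the Bruhat order in each case is the partial-sums order of Example~\ref{bruhat order for classical groups}; in that concrete setting compatibility is indeed an immediate unwinding of definitions. Your proof, by contrast, works for an arbitrary closed embedding $G \inj \genlift{G}$, where a $G$-dominant cocharacter need not be $\genlift{G}$-dominant and one must control the passage to the $\genlift{\weylgrp}$-dominant representative; there the naive decomposition $\tilde{\mu}' - \tilde{\mu} = (\tilde{\mu}' - \mu') + (\mu' - \mu) + (\mu - \tilde{\mu})$ fails because the last term has the wrong sign, and the convex-hull description of the dominance order is exactly what is needed. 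Your handling of it is sound: the inclusion $\weylgrp\cdot\mu' \subseteq \genlift{\weylgrp}\cdot\mu'$ is correctly deduced from the standard fact that two cocharacters of $\genlift{\genmaxtorus}$ are $\genlift{G}$-conjugate if and only if they are $\genlift{\weylgrp}$-conjugate, and the $\genlift{\weylgrp}$-stability of $\mathrm{conv}(\genlift{\weylgrp}\cdot\mu')$ correctly transports $\mu$ to its $\genlift{G}$-dominant representative. A minor point worth making explicit: one should fix compatible Borel subgroups $\genborel \subseteq \genlift{\genborel}$ (possible since $\genborel$ is connected solvable, hence lies in a Borel of $\genlift{G}$) so that the two notions of positivity and dominance are aligned before invoking the convex-hull criterion on either side. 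Also note that in the rational setting the convex-hull characterization needs no auxiliary lattice-coset hypothesis, since $\mathrm{conv}(\weylgrp\cdot\nu')$ is automatically contained in the affine subspace $\nu' + \Q\langle\coroots\rangle$; your formulation already reflects this correctly. In short: correct, more general, and more work than the paper intended to spend --- which, for a proposition stated in the generality this one is, is a virtue rather than a defect.
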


\begin{proof}
The assertion is straightforward to verify by definition. 
\end{proof}

\begin{example}\label{bruhat order for classical groups}
Let us describe the Bruhat order for groups considered in Example \ref{examples of newton map and kottwitz map}. 
\begin{enumerate}[label=(\arabic*)]
\item Given two elements $\gencocharacter = (\gencocharacter_1, \cdots, \gencocharacter_n)$ and $\gencocharacter' = (\gencocharacter_1', \cdots, \gencocharacter_n')$ of $\newtonset{\GL_n}$, we have $\gencocharacter \leq \gencocharacter'$ if and only if the following equivalent conditions are satisfied:
\smallskip

\begin{enumerate}[label=(\roman*)]
\item We have inequalities
\[ \sum_{i = 1}^j \gencocharacter_i \leq \sum_{i = 1}^j \gencocharacter_i' \quad \text{ for } j = 1, \cdots, n\]
with equality for $j = n$. 
\smallskip

\item If $\gencocharacter$ and $\gencocharacter'$ are regarded as polygons on the interval $[0, n]$, then $\gencocharacter$ lies below $\gencocharacter'$ with the same endpoints. 
\end{enumerate}
\smallskip

\item Since $\newtonset{\GSp_{2n}}$ is a subset of $\newtonset{\GL_{2n}}$, Proposition \ref{functoriality of bruhat order on newton set} implies that the Bruhat order on $\newtonset{\GL_n}$ restricts to the Bruhat order on $\newtonset{\GSp_{2n}}$. 
\end{enumerate}
\end{example}

\begin{defn}\label{invariants of dom cochars}
Let $\gencocharacter$ be a dominant cocharacter of $G$.
\begin{enumerate}[label=(\arabic*)]
\item The \emph{dual} of $\gencocharacter$ is the unique dominant cocharacter $\dualpolygon{\gencocharacter}$ in the conjugacy class of $-\gencocharacter$. 
\smallskip

\item The \emph{Galois average} of $\gencocharacter$ is defined by
\[\galavg{\gencocharacter}:= \dfrac{1}{[\absgalgrp: \absgalgrp_\gencocharacter]} \sum_{\tau \in \absgalgrp/\absgalgrp_\gencocharacter} \tau(\gencocharacter) \in \newtonset{G}\]
where $\absgalgrp_\gencocharacter$ denotes the stabilizer of $\gencocharacter$ in $\absgalgrp$. 
\smallskip

\item The \emph{degree} of $\gencocharacter$, denoted by $\cochardeg{\gencocharacter}$, is the image of $\gencocharacter$ under the natural projection map $\cocharlatt \surj \pi_1(G)_\absgalgrp$. 
\end{enumerate}
\end{defn}

\begin{example}\label{cochar invariants for classical groups}
Let us illustrate Definition \ref{invariants of dom cochars} for groups considered in Example \ref{examples of newton map and kottwitz map}. 
\begin{enumerate}[label=(\arabic*)]
\item For $\GL_n$, the Galois group $\absgalgrp$ acts trivially on cocharacters as $\GL_n$ is split. Hence all dominant cocharacters of $\GL_n$ are elements of $\newtonset{\GL_n}$. Given a dominant cocharacter $\gencocharacter = (\gencocharacter_1, \cdots, \gencocharacter_n)$ of $\GL_n$, we have
\[ \dualpolygon{\gencocharacter} = (-\gencocharacter_n, \cdots, -\gencocharacter_1), \quad \galavg{\gencocharacter} = \gencocharacter = (\gencocharacter_1, \cdots, \gencocharacter_n), \quad \cochardeg{\gencocharacter} = \gencocharacter_1 + \cdots + \gencocharacter_n.\]

\item For $\GSp_{2n}$, the Galois group $\absgalgrp$ acts trivially on cocharacters as $\GSp_{2n}$ is split. Hence all dominant cocharacters of $\GSp_{2n}$ are elements of $\newtonset{\GSp_{2n}}$. Given a dominant cocharacter $\gencocharacter = (\gencocharacter_1, \cdots, \gencocharacter_{2n})$ of $\GSp_{2n}$, we have
\[ \dualpolygon{\gencocharacter} = (-\gencocharacter_{2n}, \cdots, -\gencocharacter_1), \quad \galavg{\gencocharacter} = \gencocharacter = (\gencocharacter_1, \cdots, \gencocharacter_{2n}), \quad \cochardeg{\gencocharacter} = \dfrac{1}{n} (\gencocharacter_1 + \cdots + \gencocharacter_{2n}).\]

\end{enumerate}
\end{example}

\begin{prop}[{\cite[Proposition 5.2]{CFS_admlocus}, \cite[Corollary 5.4]{Viehmann_weakadmlocNewton}}]\label{nonempty newton strata assoc to basic element}
Let $\gencocharacter$ be a dominant cocharacter of $G$. Take elements $b, \modification{b} \in B(G)$ such that $b$ is basic. The Newton stratum $\Gr_{G, \gencocharacter, b}^{\modification{b}}$ is not empty if and only if we have $\kottwitzmap[\modification{b}] = \kottwitzmap[b] - \cochardeg{\gencocharacter}$ and $\newtonmap[\modification{b}] \leq \newtonmap[b] + \galavg{(\dualpolygon{\gencocharacter})}$. 
\end{prop}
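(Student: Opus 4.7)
The plan is to prove the two directions separately: necessity by tracking the degree and slopes under a single-point modification, and sufficiency by a twist that reduces the problem to the classical nonemptiness theorem for the admissible locus.

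For necessity, take $x \in \Gr_{G, \gencocharacter, b}^{\modification{b}}(\FFuntilt)$, so that $\genvb_{b,x} \simeq \genvb_{\modification{b}}$ and the identification $\restr{\genvb_b}{\schff - \FFclosedpt} \simeq \restr{\genvb_{b,x}}{\schff - \FFclosedpt}$ has relative position in the $\weylgrp$-orbit of $\gencocharacter$ at $\FFclosedpt$. The Kottwitz identity $\kottwitzmap[\modification{b}] = \kottwitzmap[b] - \cochardeg{\gencocharacter}$ follows from functoriality of $\kottwitzmap$ and Proposition \ref{newton map and kottwitz map}(2), reducing to a maximal torus containing a Weyl-conjugate of $\gencocharacter$ and computing the degree shift along such a modification (the minus sign stemming from the $\gencocharacter(\gendRunif)^{-1}$ convention in Proposition \ref{schubert cells as diamonds}). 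For the Newton inequality, use that $b$ basic is equivalent to $\genvb_b$ being semistable with slope $\newtonmap[b]$. Embedding $G \inj \GL_n$ via a faithful representation and pushing forward reduces the claim to the classical Mazur inequality for Harder--Narasimhan polygons of single-point modifications of semistable vector bundles. Varying over faithful representations and invoking Proposition \ref{functoriality of bruhat order on newton set}, the inequality lifts to $\newtonmap[\modification{b}] \leq \newtonmap[b] + \galavg{\dualpolygon{\gencocharacter}}$ in $\newtonset{G}$.

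For sufficiency, twist the entire setup by $b$. Since $b$ is basic, the automorphism group $\Aut(\genvb_b)$ is an inner form $J_b$ of $G$, and viewing $\genvb_b$ as a trivialized $J_b$-torsor sets up a bijection between modifications of $\genvb_b$ at $\FFclosedpt$ of type $\gencocharacter$ and modifications of the trivial $J_b$-bundle of the same type; this bijection preserves the Schubert stratification indexed by $\gencocharacter$ and induces a natural identification $B(G) \simeq B(J_b)$. Under this correspondence, $\modification{b}$ maps to some $\modification{b}_{J_b} \in B(J_b)$, and the hypotheses $\kottwitzmap[\modification{b}] = \kottwitzmap[b] - \cochardeg{\gencocharacter}$ and $\newtonmap[\modification{b}] \leq \newtonmap[b] + \galavg{\dualpolygon{\gencocharacter}}$ translate precisely to the admissibility conditions $\kottwitzmap[\modification{b}_{J_b}] = -\cochardeg{\gencocharacter}$ and $\newtonmap[\modification{b}_{J_b}] \leq \galavg{\dualpolygon{\gencocharacter}}$ for a modification of the trivial $J_b$-bundle. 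Nonemptiness of $\Gr_{J_b, \gencocharacter, 1}^{\modification{b}_{J_b}}$ under these conditions is precisely \cite[Proposition 5.2]{CFS_admlocus} combined with \cite[Corollary 5.4]{Viehmann_weakadmlocNewton}, and pulling back through the twist produces the required point of $\Gr_{G, \gencocharacter, b}^{\modification{b}}(\FFuntilt)$.

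The main obstacle is the sufficiency direction, which reduces to the classical nonemptiness theorem for the admissible locus. The twist step itself is formal once one carefully identifies the Schubert cells of $G$ and $J_b$; the substantive work in \cite{CFS_admlocus,Viehmann_weakadmlocNewton} consists of an induction on the Bruhat order in $\newtonset{J_b}$, starting from the extremal case $\newtonmap[\modification{b}_{J_b}] = \galavg{\dualpolygon{\gencocharacter}}$ (the weakly admissible locus) and descending to smaller Newton polygons via explicit modification arguments.
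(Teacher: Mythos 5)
The paper does not prove this proposition: it is stated purely as a citation to Chen--Fargues--Shen and Viehmann, with no proof supplied, so there is no internal argument to compare against. Evaluated on its own merits, your proposal has two issues.

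For necessity, the step ``varying over faithful representations and invoking Proposition~\ref{functoriality of bruhat order on newton set}, the inequality lifts to $\newtonmap[\modification{b}] \leq \newtonmap[b] + \galavg{(\dualpolygon{\gencocharacter})}$'' uses that proposition in the wrong direction: it asserts that the Bruhat order is \emph{preserved} by a closed embedding $G \inj \GL_n$, not \emph{reflected}. An inequality among images in $\newtonset{\GL_n}$ does not by itself descend to an inequality in $\newtonset{G}$, even varying over a family of representations, unless one is careful to use enough of them (e.g.\ all highest weight representations corresponding to fundamental weights of the adjoint group) and to supplement with the Kottwitz identity to control the central direction. The standard way to avoid this is to cite the Rapoport--Richartz formulation of Mazur's inequality for $G$-isocrystals directly rather than trying to bootstrap from $\GL_n$.

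For sufficiency, the twist to the inner form $J_b = \Aut(\genvb_b)$ is a legitimate and well-known manoeuvre, and the translation of the Kottwitz and Newton conditions under $B(G) \simeq B(J_b)$ is correct. However, both cited results already state the nonemptiness criterion for an \emph{arbitrary} basic $b$, so the reduction to the trivial element of $B(J_b)$ is not needed, and more to the point your argument ultimately invokes the same two references that the proposition attributes. The proposal is therefore not an independent proof: it recasts the necessity direction (with the gap above) and reduces the sufficiency direction to the sources, which is exactly what the paper does by citing them.
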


\section{Nonempty Newton strata in minuscule Schubert cells for $\GSp_{2n}$}

\begin{defn}\label{defn of id cochar, ord cochar, slopewise dominance}
Given a rational tuple $\gentuple$, we denote its $i$-th entry by $\HNslope{\gentuple}{i}$.
\begin{enumerate}[label=(\arabic*)]
%
\item 
The \emph{identity cocharacter} of $\GSp_{2n}$ is the element $\idcochar \in \newtonset{\GSp_{2n}}$ with 
\[\HNslope{\idcochar}{i} = 1 \quad \text{ for } i = 1, \cdots, 2n.\]

\item 
The \emph{ordinary cocharacter} of $\GSp_{2n}$ is the element $\ordcochar \in \newtonset{\GSp_{2n}}$ with 
\[ \HNslope{\ordcochar}{i} = \begin{cases} 1 & \text{ for } i = 1, \cdots, n, \\ 0 & \text{ for } i = n+1, \cdots, 2n. \end{cases}\]

\item We define the \emph{slopewise dominance order} $\slopedom$ on $\newtonset{\GSp_{2n}}$ by writing $\gencocharacter \slopedom \gencocharacter'$ if we have $\HNslope{\gencocharacter}{i} \leq \HNslope{\gencocharacter'}{i}$ for $i = 1, \cdots, 2n$. 
\end{enumerate}
\end{defn}

\begin{remark}
Our terminologies for $\idcochar$ and $\ordcochar$ come from the following facts:
\begin{enumerate}[label=(\alph*)]
\item As a polygon, $\idcochar$ coincides with the graph of the identity function on $[0, 2n]$. 
\smallskip

\item As a polygon, $\ordcochar$ coincides with the Newton polygon of the ordinary polarized abelian variety of dimension $2n$. 
%
\end{enumerate}
\end{remark}


\begin{prop}\label{embedding of B(GSp2n) into B(GL2n)}
The natural map $B(\GSp_{2n}) \longrightarrow B(\GL_{2n})$ is injective. 
\end{prop}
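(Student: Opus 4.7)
The plan is to reduce injectivity of $B(\GSp_{2n}) \longrightarrow B(\GL_{2n})$ to the already-established injectivity of the Newton map for $\GSp_{2n}$, together with the explicit description of $\newtonset{\GSp_{2n}}$ as a subset of $\newtonset{\GL_{2n}}$ given in Example \ref{examples of newton map and kottwitz map}.

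First I would invoke naturality of $\newtonmap$ (Proposition \ref{newton map and kottwitz map}) applied to the closed embedding $\GSp_{2n} \inj \GL_{2n}$ to obtain a commutative square
\[
\begin{tikzcd}
B(\GSp_{2n}) \arrow[r] \arrow[d, "\newtonmap"', swap] & B(\GL_{2n}) \arrow[d, "\newtonmap"] \\
\newtonset{\GSp_{2n}} \arrow[r] & \newtonset{\GL_{2n}}
\end{tikzcd}
\]
in which the bottom arrow is the inclusion described in Example \ref{examples of newton map and kottwitz map}(2), since $\newtonset{\GSp_{2n}}$ is literally identified with the subset of tuples in $\newtonset{\GL_{2n}}$ whose complementary slopes sum to a constant. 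In particular, the bottom arrow is injective.

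Next, the left vertical arrow is injective as well: by Example \ref{examples of newton map and kottwitz map}(2), $\pi_1(\GSp_{2n})_\absgalgrp \cong \Z$ is torsion free, so Corollary \ref{injectivity of newton map and torsion on fund grp} applies to $\GSp_{2n}$. Composing, the map $B(\GSp_{2n}) \longrightarrow \newtonset{\GL_{2n}}$ obtained by going down then right is injective, which forces the top arrow $B(\GSp_{2n}) \longrightarrow B(\GL_{2n})$ to be injective as desired.

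I do not anticipate any real obstacle here; the whole argument is a diagram chase once naturality of the Newton map and the concrete description of $\newtonset{\GSp_{2n}} \subseteq \newtonset{\GL_{2n}}$ are in hand, both of which are already recorded earlier in the paper.
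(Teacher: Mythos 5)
Your proposal is correct and is essentially identical to the paper's own proof: both use naturality of the Newton map to form the commutative square, then deduce injectivity of $B(\GSp_{2n}) \to B(\GL_{2n})$ from the injectivity of the Newton map for $\GSp_{2n}$ (via Corollary \ref{injectivity of newton map and torsion on fund grp}) together with the injectivity of the inclusion $\newtonset{\GSp_{2n}} \hookrightarrow \newtonset{\GL_{2n}}$.
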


\begin{proof}
The assertion is evident by the commutative diagram
\[
\begin{tikzcd}
B(\GL_{2n}) \arrow[r, "\newtonmap", hookrightarrow] &[2em] \newtonset{\GL_{2n}} \\[0.5em]
B(\GSp_{2n}) \arrow[r, "\newtonmap", hookrightarrow] \arrow[u] & \newtonset{\GSp_{2n}} \arrow[u, hookrightarrow] 
\end{tikzcd}
\]
where the Newton maps are injective as noted in Example \ref{examples of newton map and kottwitz map}. 
\end{proof}

\begin{remark}
While Proposition \ref{embedding of B(GSp2n) into B(GL2n)} is an interesting fact, it is not essential for our argument as we will use it only for notational convenience. Every part in our argument that invokes Proposition \ref{embedding of B(GSp2n) into B(GL2n)} will rely only on the injectivity of the map $\newtonset{\GSp_{2n}} \inj \newtonset{\GL_{2n}}$. 
\end{remark}


\begin{prop}[{\cite[Lemma 3.1.4 and Proposition 3.3.2]{Hong_generalnewtonstrataGLn}}]\label{Newton stratum for GLn, conditions on nonempty strata}
Let us regard $\idcochar$ and $\ordcochar$ as dominant cocharacters of $\GL_{2n}$. Consider elements $b, \modification{b} \in B(\GL_{2n})$. 
\begin{enumerate}[label=(\arabic*)]
\item If the Newton stratum $\Gr_{\GL_{2n}, \idcochar, b}^{\modification{b}}$ contains $\genlift{x}:= \idcochar(\gendRunif) \GL_{2n}(\BdR^+) \in \Gr_{\GL_{2n}, \idcochar}(\FFuntilt)$, we have $\newtonmap[\modification{b}] = \newtonmap[b] - \idcochar$. 
\smallskip

\item If the Newton stratum $\Gr_{\GL_{2n}, \ordcochar, b}^{\modification{b}}$ is not empty, we have
\[\newtonmap[\modification{b}] \leq \newtonmap[b] + \dualpolygon{\ordcochar} \quad\text{ and } \quad
\newtonmap[\modification{b}] \slopedom \newtonmap[b] \slopedom \newtonmap[\modification{b}] + \idcochar.\]
\end{enumerate}
\end{prop}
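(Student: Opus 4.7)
The proposition is quoted verbatim from \cite[Lemma 3.1.4 and Proposition 3.3.2]{Hong_generalnewtonstrataGLn}, so the proof in the paper will simply invoke those results. For a self-contained sketch, the plan is to translate both statements, via Proposition \ref{Beauville-Laszlo for FF curve} together with the identification (recalled after Example \ref{examples of newton map and kottwitz map}) of $\newtonmap[b]$ with the HN polygon of $\genvb_b$, into statements about minuscule modifications of vector bundles on $\schff$.

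For part (1), I would exploit the fact that $\idcochar$ is a central cocharacter of $\GL_{2n}$. The element $\idcochar(\gendRunif) \in \GL_{2n}(\BdR)$ is the scalar $\gendRunif \cdot I_{2n}$, so modifying the trivialization $\gentrivialization$ in the triple attached to $\genvb_b$ by this scalar rescales the completed local lattice at $\FFclosedpt$ uniformly by $\gendRunif$. Geometrically this is the same as tensoring $\genvb_b$ with a degree $-1$ line bundle on $\schff$, which shifts every HN slope by $-1$; thus $\genvb_{b, \genlift{x}} \simeq \genvb_b \otimes \Ocal(-1)$ and the Newton polygon becomes $\newtonmap[b] - \idcochar$, as desired.

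For part (2), I would first use Proposition \ref{Beauville-Laszlo for FF curve} to translate a point $x \in \Gr_{\GL_{2n}, \ordcochar}(\FFuntilt)$ into a short exact sequence
\[ 0 \longrightarrow \genvb_{b,x} \longrightarrow \genvb_b \longrightarrow \Qcal \longrightarrow 0 \]
of $\Ocal_\schff$-modules, with $\Qcal$ a torsion sheaf of length $n$ supported at $\FFclosedpt$; this is the geometric incarnation of a minuscule modification of type $\ordcochar$. The Bruhat inequality $\newtonmap[\modif{b}] \leq \newtonmap[b] + \dualpolygon{\ordcochar}$ should then follow from a Mazur-type argument comparing, for each $j$, the maximal degree of a rank-$j$ sub-bundle of $\genvb_{b,x}$ with the corresponding extremum for $\genvb_b$: the colength-$n$ inclusion shifts these extremal degrees by at most $n$ in the relevant direction. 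For the slopewise dominance $\newtonmap[\modif{b}] \slopedom \newtonmap[b] \slopedom \newtonmap[\modif{b}] + \idcochar$, I would invoke the Fargues--Fontaine classification to write $\genvb_b \simeq \bigoplus \Ocal(\lambda_i)$ and $\genvb_{b,x} \simeq \bigoplus \Ocal(\lambda_i')$, and analyze, slope by slope, how a rank-$n$ minuscule modification can redistribute among the summands.

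The main obstacle is the slopewise dominance. Unlike the Bruhat bound, which controls only running sums, slopewise dominance requires a position-by-position comparison, and the rank-$n$ modification can in principle interact with many HN factors of $\genvb_b$ at once. My plan would be an induction on the number of distinct HN slopes of $\genvb_b$: at each step I would isolate a single semistable piece, restrict the modification to it, and then propagate the slope inequalities via extensions. This combinatorial step is essentially the content of \cite[Proposition 3.3.2]{Hong_generalnewtonstrataGLn}, and I expect it to be the most technical ingredient of any self-contained argument.
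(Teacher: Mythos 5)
The paper gives no proof of this proposition; it is quoted verbatim from \cite{Hong_generalnewtonstrataGLn} and the only ``proof'' is the citation. You correctly identify this, so there is nothing in the paper to compare your sketch against line by line. Your reconstruction of part (1) is correct: $\idcochar$ is central, so the modification by $\idcochar(\gendRunif)$ is the scalar $\gendRunif$, which corresponds to tensoring $\genvb_b$ by $\Ocal(\pm 1)$ and thus shifts every HN slope uniformly by $\mp 1$; given the sign conventions in Propositions \ref{schubert cells as diamonds} and \ref{nonempty newton strata assoc to basic element}, the shift is by $-1$, matching the claimed $\newtonmap[\modification{b}] = \newtonmap[b] - \idcochar$.

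For part (2), your translation into a colength-$n$ inclusion $\genvb_{b,x} \hookrightarrow \genvb_b$ with torsion quotient supported at $\FFclosedpt$, and your Mazur-type argument for the Bruhat inequality, are both standard and correct. However, the induction you outline for the slopewise dominance has a soft spot: ``restrict the modification to a single semistable piece'' is not a well-defined operation, because the sub-bundle $\genvb_{b,x}$ need not respect the HN filtration of $\genvb_b$. To make this precise one must instead study, for each step $\genvb_b^{\geq \lambda}$ of the HN filtration of $\genvb_b$, the saturation of $\genvb_{b,x} \cap \genvb_b^{\geq \lambda}$ inside $\genvb_{b,x}$ and keep track of the rank and degree losses; the minuscule hypothesis is what bounds the per-position drop by $1$ (and without it slopewise dominance genuinely fails --- e.g.\ $\Ocal(-n) \oplus \Ocal^{\oplus(2n-1)} \hookrightarrow \Ocal^{\oplus 2n}$ has colength $n$ but violates $\newtonmap[b] \slopedom \newtonmap[\modification{b}] + \idcochar$ for $n>1$). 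You flag this as the technical heart and defer to the cited Proposition~3.3.2, which is acceptable for a sketch, but as written the inductive step would not go through without this refinement.
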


\begin{remark}
In fact, the second statement holds for any dominant cocharacter $\gencocharacter$ of $\GL_{2n}$ whose entries are either $0$ or $1$. 
\end{remark}

\begin{lemma}\label{Newton stratum for central twist}
Given elements $b, \modification{b} \in B(\GSp_{2n})$ such that the Newton stratum $\Gr_{\GSp_{2n}, \idcochar, b}^{\modification{b}}$ contains $x:= \idcochar(\gendRunif) \GSp_{2n}(\BdR^+) \in \Gr_{\GSp_{2n}, \idcochar}(\FFuntilt)$, we have $\newtonmap[\modification{b}] = \newtonmap[b] - \idcochar$. 
\end{lemma}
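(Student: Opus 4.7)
The plan is to reduce to the $\GL_{2n}$ case via the natural embedding $f: \GSp_{2n} \inj \GL_{2n}$ and apply Proposition \ref{Newton stratum for GLn, conditions on nonempty strata}(1). First I would observe that $\idcochar$, viewed as a cocharacter of $\GSp_{2n}$, maps under $f$ to $\idcochar$ viewed as a cocharacter of $\GL_{2n}$: indeed, $\idcochar$ is the central cocharacter $z \mapsto z \cdot \mathrm{Id}$, which is preserved by the embedding. Likewise, writing $\genlift{b}, \genlift{\modification{b}} \in B(\GL_{2n})$ for the images of $b, \modification{b}$, the point
\[\genlift{x}:= \idcochar(\gendRunif)\GL_{2n}(\BdR^+) \in \Gr_{\GL_{2n}, \idcochar}(\FFuntilt)\]
is the image of $x$ under the natural map $\Gr_{\GSp_{2n}, \idcochar}(\FFuntilt) \to \Gr_{\GL_{2n}, \idcochar}(\FFuntilt)$ induced by $f$.

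Next I would invoke Proposition \ref{functoriality of newton strata} with the embedding $f$: the given Newton stratum $\Gr_{\GSp_{2n}, \idcochar, b}^{\modification{b}}$ maps naturally into $\Gr_{\GL_{2n}, \idcochar, \genlift{b}}^{\genlift{\modification{b}}}$. Since $x$ lies in the source by hypothesis, its image $\genlift{x}$ lies in $\Gr_{\GL_{2n}, \idcochar, \genlift{b}}^{\genlift{\modification{b}}}(\FFuntilt)$. Proposition \ref{Newton stratum for GLn, conditions on nonempty strata}(1) then gives the equality
\[ \newtonmap[\genlift{\modification{b}}] = \newtonmap[\genlift{b}] - \idcochar \]
in $\newtonset{\GL_{2n}}$.

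Finally I would transport this equality back to $\newtonset{\GSp_{2n}}$. By naturality of the Newton map (Proposition \ref{newton map and kottwitz map}), the image of $\newtonmap[b]$ under $\newtonset{\GSp_{2n}} \inj \newtonset{\GL_{2n}}$ equals $\newtonmap[\genlift{b}]$, and similarly for $\modification{b}$; and $\idcochar \in \newtonset{\GSp_{2n}}$ maps to $\idcochar \in \newtonset{\GL_{2n}}$ by the observation above. Since the inclusion $\newtonset{\GSp_{2n}} \inj \newtonset{\GL_{2n}}$ is injective (see Example \ref{examples of newton map and kottwitz map}(2)), we deduce $\newtonmap[\modification{b}] = \newtonmap[b] - \idcochar$ as elements of $\newtonset{\GSp_{2n}}$, as desired.

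There is no real obstacle here; the statement is essentially a corollary of the $\GL_{2n}$-case combined with functoriality. The only minor point to be careful with is that the cocharacter $\idcochar$ and the Newton polygons behave compatibly under the embedding $\GSp_{2n} \inj \GL_{2n}$, which is immediate from the definitions.
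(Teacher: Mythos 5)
Your proposal is correct and follows essentially the same route as the paper: push $x$ forward along $\GSp_{2n}\hookrightarrow\GL_{2n}$ via Proposition \ref{functoriality of newton strata}, apply Proposition \ref{Newton stratum for GLn, conditions on nonempty strata}(1) to the image $\genlift{x}$, and transport the equality back using injectivity of $\newtonset{\GSp_{2n}}\hookrightarrow\newtonset{\GL_{2n}}$. The paper leaves the final transport step implicit (citing Proposition \ref{embedding of B(GSp2n) into B(GL2n)}), but there is no substantive difference.
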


\begin{proof}
We may regard $\idcochar$ as a dominant cocharacter of $\GL_{2n}$. In addition, in light of Proposition \ref{embedding of B(GSp2n) into B(GL2n)}, we may regard $b$ and $\modification{b}$ as elemnts of $B(\GL_{2n})$. By Proposition \ref{functoriality of newton strata}, we have a natural map
\[ \Gr_{\GSp_{2n}, \idcochar, b}^{\modification{b}}(\FFuntilt) \longrightarrow \Gr_{\GL_{2n}, \idcochar, b}^{\modification{b}}(\FFuntilt)\]
which sends $x$ to $\genlift{x}:= \idcochar(\gendRunif) \GL_{2n}(\BdR^+)$. Hence the desired assertion is evident by Proposition \ref{Newton stratum for GLn, conditions on nonempty strata}. 
\end{proof}

\begin{prop}\label{reduction to minimal minuscule}
Let $\gencocharacter$ be a dominant cocharacter of $\GSp_{2n}$ with nonnegative entries. 
Given arbitrary elements $b, \modification{b} \in B(\GSp_{2n})$, we have the following equivalent conditions:
\begin{enumerate}[label=(\roman*)]
\item\label{nonemptiness of original stratum} $\Gr_{\GSp_{2n}, \gencocharacter, b}^{\modification{b}}$ is nonempty. 
\smallskip

\item\label{nonemptiness of dual stratum} $\Gr_{\GSp_{2n}, \dualpolygon{\gencocharacter}, \modification{b}}^{b}$ is nonempty. 
\smallskip

\item\label{nonemptiness of shifted stratum} $\Gr_{\GSp_{2n}, \gencocharacter+\idcochar, b}^{\genlift{\modification{b}}}$ is nonempty for $\genlift{\modification{b}} \in B(\GSp_{2n})$ with $\newtonmap[\genlift{\modification{b}}] = \newtonmap[\modification{b}] - \idcochar$. 
\end{enumerate}
\end{prop}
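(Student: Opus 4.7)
The plan is to establish the equivalences (i) $\iff$ (ii) and (i) $\iff$ (iii) separately by constructing explicit bijections on $\FFuntilt$-valued points of the relevant Newton strata, using Proposition \ref{nonemptiness of newton strata via points} to pass between nonemptiness as diamonds and as point-sets.

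For (i) $\iff$ (ii), given $x \in \Gr_{\GSp_{2n}, \gencocharacter, b}^{\modification{b}}(\FFuntilt)$ represented by some $g \in \GSp_{2n}(\BdR^+) \gencocharacter(\gendRunif)^{-1} \GSp_{2n}(\BdR^+)$, I would consider the point $x^* := g^{-1} \GSp_{2n}(\BdR^+)$. The Cartan decomposition for $\GSp_{2n}(\BdR)$, together with the fact that $\gencocharacter(\gendRunif)$ is Weyl-conjugate to $\dualpolygon{\gencocharacter}(\gendRunif)^{-1}$ via an element of $\GSp_{2n}(\integerring{\genlocfield}) \subset \GSp_{2n}(\BdR^+)$, gives
\[ g^{-1} \in \GSp_{2n}(\BdR^+) \dualpolygon{\gencocharacter}(\gendRunif)^{-1} \GSp_{2n}(\BdR^+), \]
so $x^* \in \Gr_{\GSp_{2n}, \dualpolygon{\gencocharacter}}(\FFuntilt)$. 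Using Proposition \ref{Beauville-Laszlo for FF curve} with a fixed trivialization $\gentrivialization$ of $\genvb_b$ at $\FFclosedpt$ and the induced trivialization $g \cdot \gentrivialization$ on $\genvb_{b, x} \simeq \genvb_{\modification{b}}$, the modification of $\genvb_{\modification{b}}$ defined by $x^*$ recovers $\genvb_b$. Hence $x^* \in \Gr_{\GSp_{2n}, \dualpolygon{\gencocharacter}, \modification{b}}^{b}(\FFuntilt)$, and the construction is visibly involutive.

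For (i) $\iff$ (iii), the key observation is that $\idcochar$ is the scalar cocharacter of $\GSp_{2n}$ and thus central. Right multiplication by $\idcochar(\gendRunif)^{-1}$ therefore induces a well-defined bijection
\[ \Gr_{\GSp_{2n}, \gencocharacter}(\FFuntilt) \longrightarrow \Gr_{\GSp_{2n}, \gencocharacter+\idcochar}(\FFuntilt), \qquad g \GSp_{2n}(\BdR^+) \longmapsto g \idcochar(\gendRunif)^{-1} \GSp_{2n}(\BdR^+), \]
whose inverse is multiplication by $\idcochar(\gendRunif)$; note that $\gencocharacter + \idcochar$ remains dominant since $\idcochar$ is Weyl-fixed. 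A suitable $\genlift{\modification{b}}$ exists and is unique, for $\newtonmap[\modification{b}] - \idcochar$ preserves both the integer-breakpoint and symplectic-symmetry conditions that characterize the image of $\newtonmap$ in $\newtonset{\GSp_{2n}}$ (Example \ref{examples of newton map and kottwitz map}), and the Newton map for $\GSp_{2n}$ is injective (\emph{loc.\ cit.}).

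The main obstacle is the remaining verification that the central modification by $\idcochar(\gendRunif)^{-1}$ sends the class of $\genvb_{b, x}$ in $B(\GSp_{2n})$ precisely to that of $\genvb_{\genlift{\modification{b}}}$, i.e., that it shifts the Newton polygon by $-\idcochar$. I would handle this by transporting along the embedding $\GSp_{2n} \hookrightarrow \GL_{2n}$ as in Proposition \ref{functoriality of newton strata}: under this map $\idcochar$ goes to the scalar cocharacter of $\GL_{2n}$, and the associated modification on any rank-$2n$ vector bundle is the tensor product with $\Ocal_\schff(-1)$, which shifts every Harder-Narasimhan slope by $-1$. Injectivity of the natural inclusion $\newtonset{\GSp_{2n}} \hookrightarrow \newtonset{\GL_{2n}}$ (Proposition \ref{embedding of B(GSp2n) into B(GL2n)}) then pins down the resulting class in $B(\GSp_{2n})$ as $\genlift{\modification{b}}$, completing the argument.
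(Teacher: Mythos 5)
Your proposal is correct and follows essentially the same strategy as the paper: pass to $\FFuntilt$-valued points via Proposition \ref{nonemptiness of newton strata via points}, and for each equivalence construct an explicit involutive (resp. bijective) map of point sets — inverting the representative for (i)$\Leftrightarrow$(ii), and central right-multiplication by a power of $\idcochar(\gendRunif)$ for (i)$\Leftrightarrow$(iii). The one small organizational difference is that where the paper invokes Lemma \ref{Newton stratum for central twist} to identify the effect of the central twist on the isomorphism class of the modified bundle, you re-derive its content inline by transporting to $\GL_{2n}$ and observing that the scalar modification is a tensor twist by a degree-$(\pm 1)$ line bundle together with injectivity of $\newtonset{\GSp_{2n}} \hookrightarrow \newtonset{\GL_{2n}}$; this is precisely how that lemma is proved in the paper, so it is the same argument unfolded rather than a genuinely different route.
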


\begin{proof}
For an arbitrary point $x = g\gencocharacter(\gendRunif)\GSp_{2n}(\BdR^+) \in \Gr_{\GSp_{2n}, \gencocharacter, b}^{\modification{b}}(\FFuntilt)$, we take 
\[\dualpolygon{x} := g^{-1}\dualpolygon{\gencocharacter}(\gendRunif)\GSp_{2n}(\BdR^+) \in \Gr_{\GSp_{2n}, \dualpolygon{\gencocharacter}}(\FFuntilt)\] 
and find $\genvb_{\modification{b}, \dualpolygon{x}} \simeq \genvb_b$, thereby deducing that $\dualpolygon{x}$ lies in $\Gr_{\GSp_{2n}, \dualpolygon{\gencocharacter}, \modification{b}}^{b}(\FFuntilt)$. Similarly, every point in $\Gr_{\GSp_{2n}, \dualpolygon{\gencocharacter}, \modification{b}}^{b}(\FFuntilt)$ yields a point in $\Gr_{\GSp_{2n}, \gencocharacter, b}^{\modification{b}}(\FFuntilt)$. Hence we establish the equivalence of the conditions \ref{nonemptiness of original stratum} and \ref{nonemptiness of dual stratum} by Proposition \ref{nonemptiness of newton strata via points}.

It remains to verify the equivalence of the conditions \ref{nonemptiness of original stratum} and \ref{nonemptiness of shifted stratum}. For an arbitrary point $x = g\gencocharacter(\gendRunif)\GSp_{2n}(\BdR^+) \in \Gr_{\GSp_{2n}, \gencocharacter, b}^{\modification{b}}(\FFuntilt)$, we take 
\[\genlift{x} := g\gencocharacter(\gendRunif)\idcochar(\gendRunif) \GSp_{2n}(\BdR^+) \in \Gr_{\GSp_{2n}, \gencocharacter+\idcochar}(\FFuntilt)\] 
and find $\genvb_{b, \genlift{x}} \simeq \genvb_{\genlift{\modification{b}}}$ by Lemma \ref{Newton stratum for central twist}, thereby deducing that $\genlift{x}$ lies in $\Gr_{\GSp_{2n}, \gencocharacter+\idcochar, b}^{\genlift{\modification{b}}}(\FFuntilt)$. Similarly, every point in $\Gr_{\GSp_{2n}, \gencocharacter+\idcochar}(\FFuntilt)$ gives rise to a point in $\Gr_{\GSp_{2n}, \gencocharacter, b}^{\modification{b}}(\FFuntilt)$. Hence we complete the proof by Proposition \ref{nonemptiness of newton strata via points}. 
\end{proof}

\begin{prop}\label{minuscule cochars of GSp2n}
For a minuscule dominant cocharacter $\gencocharacter$ of $\GSp_{2n}$, we have $\gencocharacter = d \cdot \idcochar$ or $\gencocharacter = d \cdot \idcochar + \ordcochar$ for some $d \in \Z$. 
%
%
\end{prop}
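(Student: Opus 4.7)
The plan is to analyze $\gencocharacter$ entrywise, using that minuscule cocharacters of $\GSp_{2n}$ remain minuscule when viewed in $\GL_{2n}$ via the standard embedding. First I would verify that every root of $\GL_{2n}$, restricted to the maximal torus of $\GSp_{2n}$, equals $\pm \alpha$ for some root $\alpha$ of $\GSp_{2n}$: differences of entries of $\gencocharacter$ whose indices lie on the same side of the midpoint $n+\tfrac12$ correspond to the $A_{n-1}$-type roots of $\GSp_{2n}$, while those with indices on opposite sides correspond to short roots shifted by the similitude character (or, when the indices are $i$ and $2n+1-i$, to the long root attached to $i$). Hence $\gencocharacter$ is also minuscule in $\GL_{2n}$, so $\HNslope{\gencocharacter}{i} - \HNslope{\gencocharacter}{j} \in \{-1, 0, 1\}$ for all $i, j$, which means there exists $d \in \Z$ with $\HNslope{\gencocharacter}{i} \in \{d, d+1\}$ for every $i$.

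Next I would combine this with the two defining constraints on a dominant cocharacter of $\GSp_{2n}$ in its $2n$-tuple representation, as spelled out in Example \ref{examples of newton map and kottwitz map}: dominance $\HNslope{\gencocharacter}{1} \geq \cdots \geq \HNslope{\gencocharacter}{2n}$ and the similitude constraint that $\HNslope{\gencocharacter}{i} + \HNslope{\gencocharacter}{2n+1-i}$ is a constant $c$ independent of $i$. Since each entry lies in $\{d, d+1\}$, we have $c \in \{2d, 2d+1, 2d+2\}$. If $c = 2d$ then every entry equals $d$, giving $\gencocharacter = d \cdot \idcochar$; if $c = 2d+2$ then every entry equals $d+1$, giving $\gencocharacter = (d+1) \cdot \idcochar$; and if $c = 2d+1$ then $\{\HNslope{\gencocharacter}{i}, \HNslope{\gencocharacter}{2n+1-i}\} = \{d, d+1\}$ for every $i$, so dominance forces $\HNslope{\gencocharacter}{i} = d+1$ for $i \leq n$ and $\HNslope{\gencocharacter}{i} = d$ for $i > n$, yielding $\gencocharacter = d \cdot \idcochar + \ordcochar$. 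Thus $\gencocharacter$ takes one of the two asserted forms in every case.

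The only substantive step is the first one, reducing the minuscule condition in $\GSp_{2n}$ to the statement that all entries lie in $\{d, d+1\}$; once that is settled the case analysis is purely mechanical. I do not anticipate any genuine technical difficulty, since the verification amounts to matching the root data of $\GSp_{2n}$ with the root data of $\GL_{2n}$ under the natural embedding.
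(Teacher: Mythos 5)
Your proof is correct and takes essentially the same approach as the paper: the paper lists the root data of $\GL_{2n}$ and $\GSp_{2n}$ under the standard embedding and asserts the verification is straightforward, while you carry out that verification explicitly by observing that every root of $\GL_{2n}$ restricts to a root of $\GSp_{2n}$ on $T_{\GSp_{2n}}$ (so a minuscule $\gencocharacter$ of $\GSp_{2n}$ is already minuscule for $\GL_{2n}$, hence has entries in $\{d,d+1\}$), and then finishing with the similitude constraint $\HNslope{\gencocharacter}{i}+\HNslope{\gencocharacter}{2n+1-i}=c$ and dominance.
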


\begin{proof}
Let us identify the character lattice of $\GL_{2n}$ with $\Z^{2n}$ and write $e_1, \cdots, e_{2n}$ for its standard basis elements. Then we can identify the character lattice of $\GSp_{2n}$ as the lattice generated by
\[ e_0' := e_{n+1} + \cdots + e_{2n} \quad \text{ and } \quad e_i':= e_i - e_{2n+1-i} \text{ for } i = 1, \cdots, n. \]
The set of positive roots for $\GL_{2n}$ and $\GSp_{2n}$ are given by
\begin{align*} 
\roots_{\GL_{2n}}^+ &= \{ e_i - e_j: 1 \leq i < j \leq 2n\}, \\
\roots_{\GSp_{2n}}^+ &= \{ e_i' - e_j': 1 \leq i < j \leq n\} \cup \{ e_i' + e_j' - e_0': 1 \leq i \leq j \leq n\}.
\end{align*}
Now the assertion is straightforward to verify. 
\end{proof}


\begin{prop}\label{nonempty newton strata for GSp2n, necessity part}
Let $\gencocharacter$ be a minuscule dominant cocharacter of $\GSp_{2n}$. Assume that all entries of $\gencocharacter$ are either $d$ or $d+1$ for some $d \in \Z$. For elements $b, \modification{b} \in B(\GSp_{2n})$ such that the Newton stratum $\Gr_{\GSp_{2n}, \gencocharacter, b}^{\modification{b}}$ is not empty, we have
\[\newtonmap[\modification{b}] \leq \newtonmap[b] + \dualpolygon{\gencocharacter} \quad\text{ and } \quad
\newtonmap[\modification{b}] + d \cdot \idcochar \slopedom \newtonmap[b] \slopedom \newtonmap[\modification{b}] + (d+1) \cdot \idcochar.\]
\end{prop}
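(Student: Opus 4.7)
The plan is to reduce the claim to the case $\gencocharacter \in \{0, \ordcochar\}$ via a central twist, and then transfer the problem to $\GL_{2n}$ where the analogous statement is supplied by Proposition~\ref{Newton stratum for GLn, conditions on nonempty strata}.

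By Proposition~\ref{minuscule cochars of GSp2n} I may write $\gencocharacter$ as either $d\idcochar$ (the pure case) or $d\idcochar + \ordcochar$ (the mixed case). In either case I iterate Proposition~\ref{reduction to minimal minuscule}(iii) to replace the pair $(\gencocharacter, \modification{b})$ by $(\gencocharacter - d\idcochar, \modification{b}_0)$, where $\modification{b}_0 \in B(\GSp_{2n})$ is the unique element with $\newtonmap[\modification{b}_0] = \newtonmap[\modification{b}] + d\idcochar$; the existence of such a $\modification{b}_0$ follows from the explicit description of the Newton set given in Example~\ref{examples of newton map and kottwitz map}(2). The reduction applies directly when $d \geq 0$. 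When $d < 0$, Proposition~\ref{reduction to minimal minuscule} is not immediately available because its nonnegativity hypothesis is violated; I handle this either by first dualizing via Proposition~\ref{reduction to minimal minuscule}(ii) so that $\dualpolygon{\gencocharacter} = (-d-1)\idcochar + \ordcochar$ has nonnegative entries and then performing the reduction on the dual, or by simply observing that the central-twist identity (multiplication by $\idcochar(\gendRunif)$) underlying the proof of Proposition~\ref{reduction to minimal minuscule}(iii) extends verbatim to arbitrary integer shifts.

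If $\gencocharacter - d\idcochar = 0$, then $\Gr_{\GSp_{2n}, 0}$ is a single point, so $\modification{b}_0 = b$ and $\newtonmap[b] = \newtonmap[\modification{b}] + d\idcochar$; both asserted inequalities are then immediate. If $\gencocharacter - d\idcochar = \ordcochar$, I use Propositions~\ref{embedding of B(GSp2n) into B(GL2n)} and~\ref{functoriality of newton strata} to produce a nonempty $\Gr_{\GL_{2n}, \ordcochar, b}^{\modification{b}_0}$, and Proposition~\ref{Newton stratum for GLn, conditions on nonempty strata}(2) yields
\[\newtonmap[\modification{b}_0] \leq \newtonmap[b] + \dualpolygon{\ordcochar} \quad \text{and} \quad \newtonmap[\modification{b}_0] \slopedom \newtonmap[b] \slopedom \newtonmap[\modification{b}_0] + \idcochar\]
in $\newtonset{\GL_{2n}}$. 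Substituting $\newtonmap[\modification{b}_0] = \newtonmap[\modification{b}] + d\idcochar$ together with the identity $\dualpolygon{\ordcochar} - d\idcochar = \dualpolygon{\gencocharacter}$ (immediate from Example~\ref{cochar invariants for classical groups}(2)) produces the desired inequalities in $\newtonset{\GL_{2n}}$, and these descend to $\newtonset{\GSp_{2n}}$ because both the Bruhat order and slopewise dominance on $\newtonset{\GSp_{2n}}$ are restrictions of the corresponding orders on $\newtonset{\GL_{2n}}$, cf.\ Example~\ref{bruhat order for classical groups}(2).

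The most delicate point I anticipate is the reduction when $d < 0$: the dualize--reduce--dualize route naturally produces the dualized counterpart $\newtonmap[b] \leq \newtonmap[\modification{b}] + \gencocharacter$ rather than the target Bruhat inequality $\newtonmap[\modification{b}] \leq \newtonmap[b] + \dualpolygon{\gencocharacter}$, which is strictly stronger in general. I expect the cleanest resolution to be the direct verification that the central-twist proof of Proposition~\ref{reduction to minimal minuscule}(iii) applies to any integer shift, so that the reduction goes through uniformly in $d$ and the Bruhat inequality emerges in its correct form.
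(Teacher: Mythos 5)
Your proposal follows the same route as the paper: reduce to $d = 0$ via Propositions~\ref{reduction to minimal minuscule} and~\ref{minuscule cochars of GSp2n}, dispose of $\gencocharacter = 0$ trivially, and handle $\gencocharacter = \ordcochar$ by pushing forward to $\GL_{2n}$ and invoking Proposition~\ref{Newton stratum for GLn, conditions on nonempty strata}. Your concern about $d < 0$ is a fair reading of the literal nonnegativity hypothesis in Proposition~\ref{reduction to minimal minuscule}, and your preferred resolution is the correct one: the central-twist step in that proposition's proof multiplies by $\idcochar(\gendRunif)$ and never uses the sign of the entries of $\gencocharacter$, so the shift goes through for any integer $d$ and produces the Bruhat inequality in the stated form, whereas, as you observe, the dualize--reduce--dualize route yields the differently oriented inequality $\newtonmap[b] \leq \newtonmap[\modification{b}] + \gencocharacter$ rather than $\newtonmap[\modification{b}] \leq \newtonmap[b] + \dualpolygon{\gencocharacter}$.
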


\begin{proof}
By Propositions \ref{reduction to minimal minuscule} and \ref{minuscule cochars of GSp2n}, we may take $d = 0$ and assume that $\gencocharacter$ is either $0$ or $\ordcochar$. For $\gencocharacter = 0$, the assertion is evident since the only nonempty Newton stratum in $\Gr_{\GSp_{2n}, 0}$ with respect to $b$ is $\Gr_{\GSp_{2n}, 0, b}^b$. For $\gencocharacter = \ordcochar$, we have a natural map 
\[ \Gr_{\GSp_{2n}, \ordcochar, b}^{\modification{b}}(\FFuntilt) \longrightarrow \Gr_{\GL_{2n}, \ordcochar, b}^{\modification{b}}(\FFuntilt)\]
given by Proposition \ref{functoriality of newton strata}, where for $\Gr_{\GL_{2n}, \ordcochar, b}^{\modification{b}}$ we regard $\ordcochar$ as a dominant cocharacter of $\GL_{2n}$ and $b, \modification{b}$ as elements of $B(\GL_{2n})$ in light of Proposition \ref{embedding of B(GSp2n) into B(GL2n)}, and thus deduce the desired assertion by Propositions \ref{nonemptiness of newton strata via points} and \ref{Newton stratum for GLn, conditions on nonempty strata}.
\end{proof}

\begin{defn}
Let $\genpartition = (\genpartition_1, \cdots, \genpartition_l)$ be an ordered partition of an integer $m \leq n$. 
\begin{enumerate}[label=(\arabic*)]
\item We define the associated Levi subgroup $\genlevi_\genpartition$ of $\GSp_{2n}$ to be the group of block diagonal matrices 
\[ 
\quad\quad\quad
\begin{pmatrix}
g_1 & & & & & & \\
 & \ddots & & & & & \\
 & & g_{l} & & & & \\
 & & & h & & &\\
 & & & & \simcochar(h) (g_{l}^T)^{-1} & & \\
 & & & & & \ddots & \\
 & & & & & & \simcochar(h) (g_{1}^T)^{-1}
\end{pmatrix}
\text{ with } g_i \in \GL_{\genpartition_i}, h \in \GSp_{2(n-m)},
\]
where $\simcochar$ denotes the similitude character of $\GSp_{2(n-m)}$. 
\smallskip

\item Given an element $b \in B(\GSp_{2n})$, its \emph{reduction to $\genlevi_\genpartition$} is an element $\genred{b} \in B(\genlevi_\genpartition)$ whose image under the natural map $B(\genlevi_\genpartition) \longrightarrow B(\GSp_{2n})$ is equal to $b$. 
\end{enumerate}
\end{defn}

\begin{remark}
It is not hard to show that every standard Levi subgroup of $\GSp_{2n}$ is equal to $\genlevi_\genpartition$ for some ordered partition $\genpartition$ of an integer $m \leq n$. 
\end{remark}

\begin{prop}\label{newton set of levi for GSp2n}
Let $\genpartition = (\genpartition_1, \cdots, \genpartition_l)$ be an ordered partition of an integer $m \leq n$. We write $n_i := \genpartition_1 + \cdots + \genpartition_i$ for $i = 1, \cdots, l$. In addition, we set $n_0 := 0$ and $n_{l+1}:= n$. 
\begin{enumerate}[label=(\arabic*)]
%
\item The Newton set of $\genlevi_\genpartition$ is the set of dominant rational cocharacters of $\genlevi_\genpartition$, which is also identified with the set of tuples $(\genslope_1, \cdots, \genslope_{2n}) \in \Q^{2n}$ satisfying the following properties:
\smallskip
\begin{enumerate}[label=(\roman*)]
\item We have $\genslope_{n_{i-1}+1} \geq \cdots \geq \genslope_{n_i}$ for $i = 1, \cdots, l+1$. 
\smallskip

\item There exists a rational number $d$ with $\genslope_i + \genslope_{2n+1-i} = d$ for $i = 1, \cdots, n$. 
\end{enumerate}
\smallskip

\item Given two elements $\gencocharacter = (\gencocharacter_1, \cdots, \gencocharacter_{2n})$ and $\gencocharacter' = (\gencocharacter_1', \cdots, \gencocharacter_{2n}')$ of $\newtonset{\genlevi_\genpartition}$, the inequality $\gencocharacter \leq \gencocharacter'$ holds if and only if we have inequalities
\[ \sum_{i = 1}^j \gencocharacter_i \leq \sum_{i = 1}^j \gencocharacter_i' \quad \text{ for } j = 1, \cdots, 2n\]
with equalities for $j = n_1, \cdots, n_{l+1}$. 
\smallskip

\item There exists a natural isomorphism $\pi_1(\genlevi_\genpartition)_\absgalgrp \cong \Z^{l+1}$. 
\smallskip

\item Given a dominant cocharacter $\gencocharacter = (\gencocharacter_1, \cdots, \gencocharacter_{2n})$ of $\genlevi_\genpartition$, its dual $\dualpolygon{\gencocharacter} = (\dualpolygon{\gencocharacter}_1, \cdots, \dualpolygon{\gencocharacter}_{2n})$ is specified by the following identities:
\smallskip
\begin{itemize}
\item $\dualpolygon{\gencocharacter}_{n_{i-1}+j} = -{\gencocharacter}_{n_i+1-j}$ for $i = 1, \cdots, l$ and $j = 1, \cdots, \genpartition_i$. 
\smallskip

\item $\dualpolygon{\gencocharacter}_{m+i} = -{\gencocharacter}_{2n+1-m-i}$ for $i = 1, \cdots, n-m$. 
\smallskip

\item $\dualpolygon{\gencocharacter}_i + \dualpolygon{\gencocharacter}_{2n+1-i} = - (\gencocharacter_i + \gencocharacter_{2n+1-i})$ for $i = 1, \cdots, n$. 
\end{itemize}

\item Given a dominant cocharacter $\gencocharacter = (\gencocharacter_1, \cdots, \gencocharacter_{2n})$ of $\genlevi_\genpartition$, its degree $\cochardeg{\gencocharacter} = (\cochardeg{\gencocharacter}_1, \cdots, \cochardeg{\gencocharacter}_{l+1})$ is given by
\[ \cochardeg{\gencocharacter}_i = \begin{cases} \gencocharacter_{n_{i-1}+1} + \cdots + \gencocharacter_{n_i} & \text{ for } i = 1, \cdots, l\\[1em]  \dfrac{\gencocharacter_{m+1} + \cdots \gencocharacter_{2n-m}}{n-m} & \text{ for } i = l+1\end{cases}.\]
\end{enumerate}
\end{prop}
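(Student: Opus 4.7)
The proof rests on the natural isomorphism
\[ \genlevi_\genpartition \cong \GL_{\genpartition_1} \times \cdots \times \GL_{\genpartition_l} \times \GSp_{2(n-m)}, \]
which reads directly off the block-diagonal description of $\genlevi_\genpartition$. Under this isomorphism, a cocharacter of $\genlevi_\genpartition$ is a tuple of cocharacters in each factor: I would identify the $\GL_{\genpartition_i}$ factor with the entries $(\gencocharacter_{n_{i-1}+1}, \ldots, \gencocharacter_{n_i})$ for $i = 1, \ldots, l$, and the $\GSp_{2(n-m)}$ factor with the entries $(\gencocharacter_{m+1}, \ldots, \gencocharacter_{2n-m})$. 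The remaining entries $(\gencocharacter_{2n-m+1}, \ldots, \gencocharacter_{2n})$ are then determined by the similitude of the middle $\GSp_{2(n-m)}$ factor together with the block-mirror pattern of $\genlevi_\genpartition$. All five assertions of the proposition are compatible with this decomposition, so the plan is to reduce each to the corresponding fact for the $\GL_{\genpartition_i}$ and $\GSp_{2(n-m)}$ factors, using Examples \ref{examples of newton map and kottwitz map}, \ref{bruhat order for classical groups}, and \ref{cochar invariants for classical groups}.

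For part (1), since $\genlevi_\genpartition$ is split (as a Levi of the split group $\GSp_{2n}$), $\newtonset{\genlevi_\genpartition}$ is exactly the set of dominant rational cocharacters, and dominance on the product factors as dominance on each factor: the $\GL_{\genpartition_i}$ condition yields the monotonicity $\gencocharacter_{n_{i-1}+1} \geq \cdots \geq \gencocharacter_{n_i}$ for $i \leq l$, while the $\GSp_{2(n-m)}$ condition yields both the monotonicity for $i = l+1$ and the constant-similitude condition (ii). Parts (3)--(5) follow by direct factorization: $\pi_1$ of a product is the product of $\pi_1$'s, yielding $\Z^{l+1}$ via Example \ref{examples of newton map and kottwitz map}; the dual reverses and negates within each block (Example \ref{cochar invariants for classical groups}), so the three bullets in (4) come from the $\GL$ blocks, the $\GSp$ block, and the induced similitude constraint respectively; and the degree is additive over factors, with the $\GL$ contribution equal to the blockwise sum and the $\GSp$ contribution equal to $\tfrac{1}{n-m}$ times the middle-block sum. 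Part (2) uses that the Bruhat order on a product of reductive groups equals the product of Bruhat orders: the positive coroots of $\genlevi_\genpartition$ are the disjoint union of positive coroots in the factors, each supported in its own block (together with its symplectic mirror for the $\GL_{\genpartition_i}$), so nonnegative combinations preserve the partial sums at the block-boundary indices, while the Bruhat descriptions of $\GL_n$ and $\GSp_{2n}$ in Example \ref{bruhat order for classical groups} translate to the required partial-sum inequalities within each block.

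I expect the main obstacle to be bookkeeping rather than anything conceptually difficult: carefully tracking how the symplectic similitude condition couples the left half ($i \leq n$) and right half ($i > n$) of the tuple, and verifying that the dual and degree formulas for the middle $\GSp_{2(n-m)}$ factor translate correctly into the \textquotedblleft position $m+i$\textquotedblright\ indexing used in the proposition. None of the individual computations should be genuinely hard; the task is to execute them carefully and to match the combinatorial indexing across the $l+1$ factors.
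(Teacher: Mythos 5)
Your approach is correct and is essentially the one the paper intends: the paper's proof is the single sentence ``All statements are straightforward to verify by our discussion in Examples \ref{examples of newton map and kottwitz map}, \ref{bruhat order for classical groups}, and \ref{cochar invariants for classical groups},'' and what you spell out — the factorization $\genlevi_\genpartition \cong \GL_{\genpartition_1}\times\cdots\times\GL_{\genpartition_l}\times\GSp_{2(n-m)}$ plus factor-by-factor application of those Examples, with bookkeeping for the mirror blocks — is precisely what ``straightforward'' means here. Your sketch is more explicit than the paper's but not a different route.
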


\begin{proof}
All statements are straightforward to verify by our discussion in Examples \ref{examples of newton map and kottwitz map}, \ref{bruhat order for classical groups}, and \ref{cochar invariants for classical groups}.
\end{proof}

\begin{prop}\label{Levi reduction for B(GSp2n)}
Let $\genpartition = (\genpartition_1, \cdots, \genpartition_l)$ be an ordered partition of an integer $m \leq n$. We write $n_i = \genpartition_1 + \cdots + \genpartition_i$ for $i = 1, \cdots, l$. In addition, we set $n_0: = 0$ and $n_{l+1}:=n$. 
%
Given an element $b \in B(\GSp_{2n})$ such that $\newtonmap[b]$ has integer points with $n_1, \cdots, n_l$ as $x$-coordinates, there exists a reduction of $b$ to $\genlevi_\genpartition$.
\end{prop}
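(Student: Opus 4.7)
The plan is to construct a reduction $b_\genlevi \in B(\genlevi_\genpartition)$ by specifying its Newton point, and then deduce that its image under $B(\genlevi_\genpartition) \to B(\GSp_{2n})$ recovers $b$ via the injectivity of the Newton map for $\GSp_{2n}$ (Corollary \ref{injectivity of newton map and torsion on fund grp}). I would begin by using the block-diagonal description to identify $\genlevi_\genpartition$ with the product group $\prod_{i=1}^{l} \GL_{\genpartition_i} \times \GSp_{2(n-m)}$, inducing compatible product decompositions of $B(\genlevi_\genpartition)$ and $\newtonset{\genlevi_\genpartition}$ into products over the factors. Writing $\newtonmap[b] = (\slope_1, \ldots, \slope_{2n})$, Proposition \ref{newton set of levi for GSp2n}(1) yields that this tuple itself defines an element $\gencocharacter \in \newtonset{\genlevi_\genpartition}$ whose $\GL_{\genpartition_i}$-component is $(\slope_{n_{i-1}+1}, \ldots, \slope_{n_i})$ and whose $\GSp_{2(n-m)}$-component is $(\slope_{m+1}, \ldots, \slope_{2n-m})$.

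Next, I would verify that each component lies in the image of the corresponding Newton map by appealing to the criteria in Example \ref{examples of newton map and kottwitz map}. For each $\GL_{\genpartition_i}$-component, the restricted polygon on $[n_{i-1}, n_i]$ inherits integer breakpoints from $\newtonmap[b]$ and has integer endpoints at $x = n_{i-1}$ and $x = n_i$ by the hypothesis on integer points of $\newtonmap[b]$ (including $x = n_0 = 0$ trivially). For the $\GSp_{2(n-m)}$-component on $[m, 2n-m]$, the integer breakpoint condition is again inherited from $\newtonmap[b]$, the required symmetry $\slope_i + \slope_{2n+1-i} \in \Z$ for the relevant indices is a sub-condition of the symmetry of $\newtonmap[b]$, and the endpoint at $x = 2n-m$ has height $(n-m)d + y_m$ where $y_m := \sum_{j=1}^m \slope_j \in \Z$ by hypothesis and $d := \slope_i + \slope_{2n+1-i}$ is integer since $\newtonmap[b]$ lies in the image of the Newton map for $\GSp_{2n}$. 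Hence $\gencocharacter$ lies in the image of the Newton map for $\genlevi_\genpartition$.

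Finally, since the Newton maps for the factors (and hence for $\genlevi_\genpartition$) are injective by Corollary \ref{injectivity of newton map and torsion on fund grp}, there exists a unique $b_\genlevi \in B(\genlevi_\genpartition)$ with $\newtonmap[b_\genlevi] = \gencocharacter$. By functoriality, the image $b'$ of $b_\genlevi$ in $B(\GSp_{2n})$ satisfies $\newtonmap[b'] = \newtonmap[b]$, so $b' = b$ by the injectivity of the Newton map for $\GSp_{2n}$, completing the construction of the desired reduction. The main subtle point is the integrality check for the endpoint of the $\GSp_{2(n-m)}$-component at $x = 2n-m$, which relies on the integrality of the symmetry constant $d$ (a built-in requirement for $\newtonmap[b]$ to lie in the image of the Newton map for $\GSp_{2n}$); the remaining verifications are routine consequences of inheriting integer breakpoints from the full polygon and the hypothesis on integer points at $x = n_1, \ldots, n_l$.
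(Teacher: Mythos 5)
Your proposal is correct and takes essentially the same route as the paper: both decompose $\newtonmap[b]$ along the block structure of $\genlevi_\genpartition$, check (using the hypothesis on integer points at $x = n_1, \ldots, n_l$ together with the built-in integrality of the symmetry constant $d$) that each piece is a Newton polygon for the corresponding factor group, and then invoke injectivity of the Newton map for $\GSp_{2n}$ to conclude that the resulting element of $B(\genlevi_\genpartition)$ is a reduction of $b$. The only cosmetic difference is that the paper writes down an explicit block-diagonal representative $\genred{b} \in \genlevi_\genpartition(\compmaxunram{\genlocfield})$ built from representatives $g_i$ and $h$, whereas you argue abstractly via the product decomposition $\genlevi_\genpartition \cong \prod_i \GL_{\genpartition_i} \times \GSp_{2(n-m)}$ of $B(\genlevi_\genpartition)$; these are the same argument.
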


\begin{proof}
For a rational tuple $\gentuple$, we denote its $i$-th entry by $\HNslope{\gentuple}{i}$. 
Our discussion in Example \ref{examples of newton map and kottwitz map} implies that there exists an integer $d$ with 
\begin{equation}\label{symmetry for slopes of newton polygon, levi reduction lemma}
\HNslope{\newtonmap[b]}{i} + \HNslope{\newtonmap[b]}{2n+1-i} = d \quad \text{ for } i = 1, \cdots, n.
\end{equation}
For each $i = 1, \cdots, l$, we define $\genpartition_i$-tuples $\newtonmaplevipart{i}$ and $\newtonmaplevipartdual{i}$ by
\[ \HNslope{\newtonmaplevipart{i}}{j} = \HNslope{\newtonmap[b]}{n_{i-1}+j} \quad \text{ and } \quad \HNslope{\newtonmaplevipartdual{i}}{j} = \HNslope{\newtonmap[b]}{2n-n_i+j} \quad \text{ for } j = 1, \cdots, \genpartition_j.\]
In addition, we define a $2(n-m)$-tuple $\newtonmaplevipartmid$ by
\[ \HNslope{\newtonmaplevipartmid}{i} = \HNslope{\newtonmap[b]}{m+i} \quad \text{ for } i = 1, \cdots, 2(n-m).\]
As polygons, these tuples form a partition of $\newtonmap[b]$ as illustrated in Figure \ref{partition of newton polygon by levi}. 
\begin{figure}[H]
\begin{tikzpicture}[scale=0.5]	
		\coordinate (left) at (0, 0);
		\coordinate (p1) at (1, 2);
		\coordinate (p2) at (2, 3);
		\coordinate (p3) at (4, 5);
		\coordinate (p4) at (6, 6);
		\coordinate (p5) at (8, 6);
		\coordinate (p6) at (9, 6);
		\coordinate (right) at (10, 5);

		\draw[step=1cm,thick, color=blue] (left) -- (p1) --  (p2) -- (p3) -- (p4) -- (p5) -- (p6) -- (right);

		\draw [fill] (left) circle [radius=0.08];
		\draw [fill] (right) circle [radius=0.08];		

		\draw [fill] (p1) circle [radius=0.08];		
		\draw [fill] (p2) circle [radius=0.08];		
		\draw [fill] (p3) circle [radius=0.08];		
		\draw [fill] (p4) circle [radius=0.08];		
		\draw [fill] (p5) circle [radius=0.08];		
		\draw [fill] (p6) circle [radius=0.08];		
		
		\draw[step=1cm,dotted] (2, -0.4) -- (2, 6.8);
		\draw[step=1cm,dotted] (4, -0.4) -- (4, 6.8);
		\draw[step=1cm,dotted] (6, -0.4) -- (6, 6.8);
		\draw[step=1cm,dotted] (8, -0.4) -- (8, 6.8);
		
		\node at (2,-0.8) {\scriptsize $n_1$};
		\node at (4,-0.8) {\scriptsize $n_2$};

		\path (p6) ++(1.2, 0) node {\color{blue}$\newtonmap[b]$};
\end{tikzpicture}
\hspace{0.3cm}
\begin{tikzpicture}[scale=0.4]
        \pgfmathsetmacro{\textycoordinate}{5}
		\draw[->, line width=0.6pt] (0, \textycoordinate) -- (1.5,\textycoordinate);
		\draw (0,0) circle [radius=0.00];	
\end{tikzpicture}
\hspace{0.3cm}
\begin{tikzpicture}[scale=0.5]
		\coordinate (left) at (0, 0);
		\coordinate (p1) at (1, 2);
		\coordinate (p2) at (2, 3);
		\coordinate (p3) at (4, 5);
		\coordinate (p4) at (6, 6);
		\coordinate (p5) at (8, 6);
		\coordinate (p6) at (9, 6);
		\coordinate (right) at (10, 5);

		\draw[step=1cm,thick, color=red] (left) -- (p1) --  (p2);
		\draw[step=1cm,thick, color=orange] (p2) -- (p3);
		\draw[step=1cm,thick, color=green] (p3) --  (p4);
		\draw[step=1cm,thick, color=magenta] (p4) --  (p5);
		\draw[step=1cm,thick, color=violet] (p5) -- (p6) --  (right);

		\draw [fill] (left) circle [radius=0.08];
		\draw [fill] (right) circle [radius=0.08];		

		\draw [fill] (p1) circle [radius=0.08];		
		\draw [fill] (p2) circle [radius=0.08];		
		\draw [fill] (p3) circle [radius=0.08];		
		\draw [fill] (p4) circle [radius=0.08];		
		\draw [fill] (p5) circle [radius=0.08];		
		\draw [fill] (p6) circle [radius=0.08];	
		
		\draw[step=1cm,dotted] (2, -0.4) -- (2, 6.8);
		\draw[step=1cm,dotted] (4, -0.4) -- (4, 6.8);
		\draw[step=1cm,dotted] (6, -0.4) -- (6, 6.8);
		\draw[step=1cm,dotted] (8, -0.4) -- (8, 6.8);
		
		\node at (2,-0.8) {\scriptsize $n_1$};
		\node at (4,-0.8) {\scriptsize $n_2$};
		
		\path (p1) ++(-1, 0) node {\color{red}$\newtonmaplevipart{1}$};
		\path (p2) ++(1, 1.8) node {\color{orange}$\newtonmaplevipart{2}$};
		\path (p3) ++(1, 1) node {\color{green}$\newtonmaplevipartmid$};
		\path (p4) ++(1.1, 0.6) node {\color{magenta}$\newtonmaplevipartdual{2}$};
		\path (p6) ++(1.2, 0) node {\color{violet}$\newtonmaplevipartdual{1}$};
\end{tikzpicture}
\caption{Partition of $\newtonmap[b]$ by integer points}\label{partition of newton polygon by levi}
\end{figure}
By construction, these polygons all have integer breakpoints. In addition, by \eqref{symmetry for slopes of newton polygon, levi reduction lemma} we find 
\begin{equation}\label{symmetry for slopes of newton polygon, levi reduction lemma mid part}
\HNslope{\newtonmaplevipartmid}{i} + \HNslope{\newtonmaplevipartmid}{2n-2m+1-i} = d \quad \text{ for } i = 1, \cdots, 2(n-m).
\end{equation}
Hence by our discussion in Example \ref{examples of newton map and kottwitz map}, we find an element $b_i \in B(\GL_{\genpartition_i})$ with $\newtonmap[b_i] = \newtonmaplevipart{i}$ for $i = 1, \cdots, l$ and also an element $c \in B(\GSp_{2(n-m)})$ with $\newtonmap[c] = \newtonmaplevipartmid$. Let us choose a representative $g_i \in \GL_{\genpartition_i}(\compmaxunram{\genlocfield})$ of $b_i$ for each $i = 1, \cdots, l$ and  a representative $h \in \GSp_{2(n-m)}(\compmaxunram{\genlocfield})$ of $c$. We take $\genred{b} \in B(\genlevi_\genpartition)$ to be the $\frobfieldaut$-conjugacy class of
\[
\begin{pmatrix}
g_1 & & & & & & \\
 & \ddots & & & & & \\
 & & g_{l} & & & & \\
 & & & h & & &\\
 & & & & \simcochar(h) (g_{l}^T)^{-1} & & \\
 & & & & & \ddots & \\
 & & & & & & \simcochar(h) (g_{1}^T)^{-1}
\end{pmatrix}
\in \genlevi_{\genpartition}(\compmaxunram{\genlocfield})
\]
where $\simcochar$ denotes the similitude character of $\GSp_{2(n-m)}$, and denote by $\levilift{b}$ the image of $\genred{b}$ under the natural map $B(\genlevi_\genpartition) \longrightarrow B(\GSp_{2n})$. By the functoriality of the Newton map, $\newtonmap[\levilift{b}]$ is given by the image of $\newtonmap[\genred{b}]$ under the natural map $\newtonset{\genlevi_\genpartition} \longrightarrow \newtonset{\GSp_{2n}}$. It is then not hard to see by \eqref{symmetry for slopes of newton polygon, levi reduction lemma} and \eqref{symmetry for slopes of newton polygon, levi reduction lemma mid part} that $\newtonmap[\levilift{b}]$ coincides with $\newtonmap[b]$. Since the Newton map for $\GSp_{2n}$ is injective as noted in Example \ref{examples of newton map and kottwitz map}, we find $b = \levilift{b}$ and consequently complete the proof. 
\end{proof}

\begin{theorem}\label{classification of nonempty Newton strata for GSp2n}
Let $\gencocharacter$ be a minuscule dominant cocharacter of $\GSp_{2n}$ whose entries are either $d$ or $d+1$ for some $d \in \Z$. Consider elements $b, \modification{b} \in B(\GSp_{2n})$ such that any two distinct slopes in $\newtonmap[b]$ differ by more than $1$.
The Newton stratum $\Gr_{\GSp_{2n}, \gencocharacter, b}^{\modification{b}}$ is nonempty if and only if the following conditions are satisfied:
\begin{enumerate}[label=(\roman*)]
\item\label{newton polygon inequalities for nonempty Newton strata} We have 
$\newtonmap[\modification{b}] \leq \newtonmap[b] + \dualpolygon{\gencocharacter}$ and $\newtonmap[\modification{b}] + d \cdot \idcochar \slopedom \newtonmap[b] \slopedom \newtonmap[\modification{b}] + (d+1) \cdot \idcochar$. 
\smallskip

\item\label{breakpoint condition for nonempty Newton strata} For each breakpoint of $\newtonmap[b]$, there exists 
a breakpoint of $\newtonmap[\modification{b}]$ with the same $x$-coordinate. 
\end{enumerate}
\end{theorem}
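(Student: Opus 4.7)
For the necessity part, I would deduce the inequalities in \ref{newton polygon inequalities for nonempty Newton strata} directly from Proposition \ref{nonempty newton strata for GSp2n, necessity part}. For condition \ref{breakpoint condition for nonempty Newton strata}, I would rewrite those inequalities as $d \leq \HNslope{\newtonmap[b]}{i} - \HNslope{\newtonmap[\modification{b}]}{i} \leq d+1$ for every $i$, and then argue that at any breakpoint $m$ of $\newtonmap[b]$ the hypothesis on slope gaps gives
\[
\HNslope{\newtonmap[\modification{b}]}{m} - \HNslope{\newtonmap[\modification{b}]}{m+1} \;\geq\; \bigl(\HNslope{\newtonmap[b]}{m} - (d+1)\bigr) - \bigl(\HNslope{\newtonmap[b]}{m+1} - d\bigr) \;=\; \HNslope{\newtonmap[b]}{m} - \HNslope{\newtonmap[b]}{m+1} - 1 \;>\; 0,
\]
so $m$ must also be a breakpoint of $\newtonmap[\modification{b}]$.

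For the sufficiency part, I would first invoke Proposition \ref{reduction to minimal minuscule} iteratively, using the equivalence of its conditions \ref{nonemptiness of original stratum} and \ref{nonemptiness of shifted stratum} to shift $\gencocharacter$ by multiples of $\idcochar$ and reduce to the case $d = 0$. By Proposition \ref{minuscule cochars of GSp2n} we may then assume $\gencocharacter \in \{0, \ordcochar\}$. The case $\gencocharacter = 0$ is immediate since the inequalities in \ref{newton polygon inequalities for nonempty Newton strata} pin down $\newtonmap[\modification{b}] = \newtonmap[b]$, whence $\modification{b} = b$ by the injectivity of the Newton map for $\GSp_{2n}$ from Example \ref{examples of newton map and kottwitz map}. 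The heart of the argument is the case $\gencocharacter = \ordcochar$, which I would handle by the Levi reduction sketched in the introduction.

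Let $\genlevi$ denote the centralizer of $\newtonmap[b]$ in $\GSp_{2n}$, which has the form $\genlevi_\genpartition$ for the ordered partition $\genpartition = (\genpartition_1, \ldots, \genpartition_l)$ of some $m \leq n$ whose partial sums record the $x$-coordinates of the interior breakpoints of $\newtonmap[b]$ on $[0, n]$. Proposition \ref{Levi reduction for B(GSp2n)} produces a reduction $b_\genlevi \in B(\genlevi)$ of $b$, and the same proposition together with condition \ref{breakpoint condition for nonempty Newton strata} supplies a reduction $\modification{b}_\genlevi \in B(\genlevi)$ of $\modification{b}$; by construction $\newtonmap[b_\genlevi]$ is central in $\genlevi$, so $b_\genlevi$ is basic. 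I would then construct a minuscule dominant cocharacter $\gencocharacter_\genlevi$ of $\genlevi$ that is $\GSp_{2n}$-conjugate to $\ordcochar$, by distributing the $n$ ones of $\ordcochar$ across the $\GL_{\genpartition_i}$-blocks and the central $\GSp_{2(n-m)}$-block in the unique way dictated by the block-by-block values of the integers $\HNslope{\newtonmap[b]}{i} - \HNslope{\newtonmap[\modification{b}]}{i} \in \{0, 1\}$. Using the descriptions of the Bruhat order, the dual and the degree on $\newtonset{\genlevi}$ from Proposition \ref{newton set of levi for GSp2n}, condition \ref{newton polygon inequalities for nonempty Newton strata} translates exactly into the identities $\kottwitzmap[\modification{b}_\genlevi] = \kottwitzmap[b_\genlevi] - \cochardeg{\gencocharacter_\genlevi}$ and $\newtonmap[\modification{b}_\genlevi] \leq \newtonmap[b_\genlevi] + \galavg{\dualpolygon{\gencocharacter_\genlevi}}$, the Galois average being trivial since $\genlevi$ is split. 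Proposition \ref{nonempty newton strata assoc to basic element} then yields a $\FFuntilt$-point of $\Gr_{\genlevi, \gencocharacter_\genlevi, b_\genlevi}^{\modification{b}_\genlevi}$, and Proposition \ref{functoriality of newton strata} applied to $\genlevi \inj \GSp_{2n}$ transports this to a $\FFuntilt$-point of $\Gr_{\GSp_{2n}, \gencocharacter, b}^{\modification{b}}$, whence nonemptiness by Proposition \ref{nonemptiness of newton strata via points}.

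The main obstacle I expect is the combinatorial construction of $\gencocharacter_\genlevi$ and the verification that the translation of \ref{newton polygon inequalities for nonempty Newton strata} into the $\genlevi$-level hypotheses of Proposition \ref{nonempty newton strata assoc to basic element} is actually tight. The delicate step is the central $\GSp_{2(n-m)}$-block, where the symplectic involution forces a symmetric distribution of ones and the similitude character contributes to the Kottwitz degree; the block-by-block bookkeeping must be carried out carefully using the explicit formulas for the dual and the degree in Proposition \ref{newton set of levi for GSp2n}.
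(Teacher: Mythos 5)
Your proposal follows essentially the same route as the paper: necessity via the embedding into $\GL_{2n}$ and the slopewise/breakpoint calculation, sufficiency via shifting by $\idcochar$ to reduce to $d=0$, then Levi reduction to the centralizer $\genlevi_\genpartition$ of $\newtonmap[b]$, construction of a minuscule $\genlevi$-cocharacter conjugate to $\ordcochar$ with block-sums $d_i$ determined by condition \ref{newton polygon inequalities for nonempty Newton strata}, and invocation of Proposition \ref{nonempty newton strata assoc to basic element} followed by Propositions \ref{functoriality of newton strata} and \ref{nonemptiness of newton strata via points}. The only differences are cosmetic: you run the necessity argument before reducing to $d=0$ rather than after, and you phrase the construction of $\gencocharacter_\genlevi$ via block-by-block ones-counts rather than the explicit formula, but the content matches the paper's proof step for step.
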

\begin{figure}[H]
\begin{tikzpicture}[scale=0.7]
		\coordinate (p00) at (1.5, 4);
		\coordinate (p11) at (2.8, 6.3);
		\coordinate (r11) at (4, 7.8);
		\coordinate (p22) at (5.2, 8.7);
		\coordinate (p33) at (6.5, 9);
		\coordinate (p44) at (8, 8);

		\coordinate (left) at (0, 0);
		\coordinate (q0) at (1.5, 3.2);
		\coordinate (q1) at (4, 5.2);
		\coordinate (q2) at (6.5, 5.7);
		\coordinate (q3) at (8, 4);
		
		\coordinate (r1) at (4, 3.8);
		\coordinate (r2) at (6.5, 3.5);

		\coordinate (p0) at (1.5, 2.5);
		\coordinate (p1) at (2.8, 3.5);
		\coordinate (p2) at (5.2, 3.5);
		\coordinate (p3) at (6.5, 2.5);
		\coordinate (p4) at (8, 0);
				
		\draw[step=1cm,thick, color=red] (left) -- (q0) --  (q1) -- (q2) -- (q3);
		\draw[step=1cm,thick, color=green] (left) -- (p0) --  (p1) -- (r1) -- (p2) -- (p3) -- (p4);
		\draw[step=1cm,thick, color=teal] (left) -- (p00) --  (p11) -- (r11) -- (p22) -- (p33) -- (p44);
		\draw[step=1cm,thick, color=blue] (q1) -- (r2) -- (p4);
		
		\draw [fill] (q0) circle [radius=0.05];		
		\draw [fill] (q1) circle [radius=0.05];		
		\draw [fill] (q2) circle [radius=0.05];		
		\draw [fill] (q3) circle [radius=0.05];		
		\draw [fill] (left) circle [radius=0.05];
		
		\draw [fill] (r1) circle [radius=0.05];		
		\draw [fill] (r2) circle [radius=0.05];	
		
		\draw [fill] (p0) circle [radius=0.05];		
		\draw [fill] (p1) circle [radius=0.05];		
		\draw [fill] (p2) circle [radius=0.05];		
		\draw [fill] (p3) circle [radius=0.05];		
		\draw [fill] (p4) circle [radius=0.05];		

		\draw [fill] (p00) circle [radius=0.05];		
		\draw [fill] (p11) circle [radius=0.05];		
		\draw [fill] (r11) circle [radius=0.05];		
		\draw [fill] (p22) circle [radius=0.05];	
		\draw [fill] (p33) circle [radius=0.05];	
		\draw [fill] (p44) circle [radius=0.05];	
		
		\draw[step=1cm,dotted] (1.5, -0.4) -- (1.5, 9.2);
		\draw[step=1cm,dotted] (4, -0.4) -- (4, 9.2);
		\draw[step=1cm,dotted] (6.5, -0.4) -- (6.5, 9.2);

		
		\path (q1) ++(0.8, 0.6) node {\color{red}$\newtonmap[b]$};
		\path (r2) ++(1.8, -0.8) node {\color{blue}$\newtonmap[b] + \dualpolygon{\gencocharacter}$};
		\path (p00) ++(-1.3, 0) node {\color{teal}$\newtonmap[\modification{b}] + \idcochar$};
		\path (p0) ++(0.7, -0.3) node {\color{green}$\newtonmap[\modification{b}]$};

\end{tikzpicture}
\caption{Illustration of the conditions in Theorem \ref{classification of nonempty Newton strata for GSp2n} for $d =0$}
\end{figure}

\begin{proof}
For a rational tuple $\gentuple$, we denote its $i$-th entry by $\HNslope{\gentuple}{i}$. By Propositions \ref{reduction to minimal minuscule} and \ref{minuscule cochars of GSp2n}, we may set $d = 0$ and assume that $\gencocharacter$ is either $0$ or $\ordcochar$. For $\gencocharacter = 0$, the assertion is evident since the only nonempty Newton stratum in $\Gr_{\GSp_{2n}, 0}$ with respect to $b$ is $\Gr_{\GSp_{2n}, 0, b}^b$. Therefore we may henceforth take $\gencocharacter = \ordcochar$.

Let us first assume that $\Gr_{\GSp_{2n}, \gencocharacter, b}^{\modification{b}}$ is nonempty. The necessity of the condition \ref{newton polygon inequalities for nonempty Newton strata} immediately follows from Proposition \ref{nonempty newton strata for GSp2n, necessity part}. Now we take an arbitrary breakpoint of $\newtonmap[b]$ and denote its $x$-coordinate by $m$. By the condition \ref{newton polygon inequalities for nonempty Newton strata} and the assumption on slopes of $\newtonmap[b]$, we find
\begin{equation*} 
\HNslope{\newtonmap[\modification{b}]}{m+1} \leq \HNslope{\newtonmap[b]}{m+1} < \HNslope{\newtonmap[b]}{m} -1 \leq \HNslope{\newtonmap[\modification{b}]}{m}
\end{equation*}
and consequently deduce that $\modification{\gencocharacter}$ has a breakpoint with $x$-coordinate $m$, thereby establishing the necessity of the condition \ref{breakpoint condition for nonempty Newton strata}.

For the converse, we now assume that the conditions \ref{newton polygon inequalities for nonempty Newton strata} and \ref{breakpoint condition for nonempty Newton strata} are satisfied. We consider the breakpoints of $\newtonmap[b]$ on the interval $[0, n]$ and denote their $x$-coordinates by $n_1, \cdots, n_l$ in ascending order. We also set $n_0: = 0$ and take $\genpartition = (\genpartition_1, \cdots, \genpartition_l)$ to be an ordered partition of $m = n_l$ with 
\[\genpartition_i:= n_i - n_{i-1} \quad \text{ for } i = 1, \cdots, l.\] 
It is not hard to see that the centralizer of $\newtonmap[b]$ is $\genlevi_\genpartition$. Moreover, Proposition \ref{Levi reduction for B(GSp2n)} and the condition \ref{breakpoint condition for nonempty Newton strata} together imply that $b$ and $\modification{b}$ admit reductions to $\genlevi_\genpartition$, which we respectively denote by $\genred{b}$ and $\genred{\modification{b}}$. Let us now set
\[d_i:= \sum_{j = n_{i-1}+1}^{n_i} \HNslope{\newtonmap[b]}{j} - \sum_{j=n_{i-1}+1}^{n_i} \HNslope{\newtonmap[\modification{b}]}{j} \quad \text{ for } i = 1, \cdots, l.\]
By the condition \ref{newton polygon inequalities for nonempty Newton strata}, we have $0 \leq d_i \leq \genpartition_i$ for $i = 1, \cdots, l$. Take $\genred{\gencocharacter}$ to be the $2n$-tuple specified by the following properties:
\begin{enumerate}[label=(\alph*)]
\item For $i = 1, \cdots, l+1$ and $j = 1, \cdots, \genpartition_i$, we have
\[\HNslope{\genred{\gencocharacter}}{n_{i-1}+j} = \begin{cases} 1 & \text{ if } j \leq d_i,\\ 0 & \text { if } j > d_i,\end{cases}\]
where we set $d_{l+1}:= n- n_l = n-m$. 
\smallskip

\item For $j = 1, \cdots, n$, we have $\HNslope{\genred{\gencocharacter}}{j} + \HNslope{\genred{\gencocharacter}}{2n+1-j} = 1$. 
\end{enumerate}
It is not hard to see by Proposition \ref{newton set of levi for GSp2n} that $\genred{\gencocharacter}$ is a dominant cocharacter of $\genlevi_\genpartition$ with
\[\kottwitzmap[\genred{\modification{b}}] = \kottwitzmap[\genred{b}] - \cochardeg{\genred{\gencocharacter}} \quad \text{ and } \quad \newtonmap[\genred{\modification{b}}] \leq \newtonmap[\genred{b}] - \galavg{(\dualpolygon{\genred{\gencocharacter}})}.\]
Moreover, $\genred{b}$ is basic in $B(\genlevi_\genpartition)$ by construction. Hence Proposition \ref{nonempty newton strata assoc to basic element} implies that the Newton stratum $\Gr_{\genlevi_\genpartition, \genred{\gencocharacter}, \genred{b}}^{\genred{\modification{b}}}$ is not empty. Since we have a natural map
\[ \Gr_{\genlevi_\genpartition, \genred{\gencocharacter}, \genred{b}}^{\genred{\modification{b}}}(\FFuntilt) \longrightarrow \Gr_{\GSp_{2n}, \gencocharacter, b}^{\modification{b}}(\FFuntilt)\]
by Proposition \ref{functoriality of newton strata}, we deduce by Proposition \ref{nonemptiness of newton strata via points} that $\Gr_{\GSp_{2n}, \gencocharacter, b}^{\modification{b}}$ is not empty as desired, thereby completing the proof. 
\end{proof}

\bibliographystyle{amsalpha}

\bibliography{Bibliography}

\providecommand{\bysame}{\leavevmode\hbox to3em{\hrulefill}\thinspace}
\providecommand{\MR}{\relax\ifhmode\unskip\space\fi MR }
\providecommand{\MRhref}[2]{%
  \href{http://www.ams.org/mathscinet-getitem?mr=#1}{#2}
}
\providecommand{\href}[2]{#2}
\begin{thebibliography}{Man63}

\bibitem[BL95]{BL_beauvillelaszlothm}
Arnaud Beauville and Yves Laszlo, \emph{Un lemme de descente}, C. R. Acad. Sci.
  Paris Sér. I Math. \textbf{320} (1995), no.~1, 335--340.

\bibitem[CFS21]{CFS_admlocus}
Miaofen Chen, Laurent Fargues, and Xu~Shen, \emph{On the structure of some
  $p$-adic period domains}, Camb. J. Math. \textbf{9} (2021), no.~1, 213--267.

\bibitem[Che20]{Chen_FRconjnonbasic}
Miaofen Chen, \emph{Fargues-{R}apoport conjecture for $p$-adic period domains
  in the non-basic case}, J. Eur. Math. Soc., to appear.

\bibitem[CS17]{CS_cohomunitaryshimura}
Ana Caraiani and Peter Scholze, \emph{{O}n the generic part of the cohomology
  of compact unitary {S}himura varieties}, Annals of Math. \textbf{186} (2017),
  no.~3, 649--766.

\bibitem[CT22]{CT_weakaddandnewton}
Miaofen Chen and Jilong Tong, \emph{Weakly admissible locus and {N}ewton
  stratification in $p$-adic {H}odge theory}, preprint.

\bibitem[Far20]{Fargues_Gbundle}
Laurent Fargues, \emph{{$G$}-torseurs en th\'eorie de {H}odge {$p$}-adique},
  Comp. Math. \textbf{156} (2020), no.~10, 2076--2110.

\bibitem[FF18]{FF_curve}
Laurent Fargues and Jean-Marc Fontaine, \emph{Courbes et fibr\'es vectoriels en
  th\'eorie de {H}odge p-adique}, Ast\'erisque \textbf{406} (2018).

\bibitem[Fon82]{Fontaine_BdR}
Jean-Marc Fontaine, \emph{Sur certains types de representations p-adiques du
  groupe de {G}alois d'un corps local; {C}onstruction d'un anneau de
  {B}arsotti-{T}ate}, Annals of Mathematics \textbf{115} (1982), no.~3,
  529--577.

\bibitem[FS21]{FS_geomLL}
Laurent Fargues and Peter Scholze, \emph{Geometrization of the local
  {L}anglands correspondence}, arXiv:2102.13459.

\bibitem[Hon22]{Hong_generalnewtonstrataGLn}
Serin Hong, \emph{On nonemptiness of {N}ewton strata in the
  ${B}^+_\text{dR}$-{G}rassmannian for $\mathrm{GL}_n$}, arXiv:2209.08374.

\bibitem[Ked16]{Kedlaya_noeth}
Kiran~S. Kedlaya, \emph{Noetherian properties of {F}argues-{F}ontaine curves},
  Int. Math. Res. Not. IMRN (2016), no.~8, 2544--2567.

\bibitem[KL15]{KL_relpadic1}
Kiran~S. Kedlaya and Ruochuan Liu, \emph{Relative {$p$}-adic {H}odge theory:
  foundations}, Ast\'erisque \textbf{371} (2015).

\bibitem[Kot85]{Kottwitz_isocrystal}
Robert~E. Kottwitz, \emph{Isocrystals with additional structure}, Comp. Math.
  \textbf{56} (1985), 201--220.

\bibitem[Kot97]{Kottwitz_isocrystal2}
\bysame, \emph{Isocrystals with additional structure {II}}, Comp. Math.
  \textbf{109} (1997), 255--339.

\bibitem[Man63]{Manin_maninclassification}
Yuri~I. Manin, \emph{The theory of commutative formal groups over fields of
  finite characteristic}, Russian Mathematical Surveys \textbf{18} (1963),
  no.~6, 1–83.

\bibitem[NV21]{NV_HNstrata}
Kieu~Hieu Nguyen and Eva Viehmann, \emph{A {H}arder-{N}arasimhan stratification
  of the ${B}^+_\text{dR}$-{G}rassmannian}, arXiv:2111.01764.

\bibitem[Rap18]{Rapoport_basicminuscule}
Michael Rapoport, \emph{Accessible and weakly accessible period domains},
  Annales de l'ENS \textbf{51} (2018), 856--863, appendix to \textit{On the
  p-adic cohomology of the Lubin-Tate tower} by P. Scholze.

\bibitem[RR96]{RR_isocrystals}
M.~Rapoport and M.~Richartz, \emph{On the classification and specialization of
  $f$-isocrystals with additional structure}, Comp. Math. \textbf{103} (1996),
  no.~2, 153--181.

\bibitem[RV14]{RV_localshimura}
Michael Rapoport and Eva Viehmann, \emph{Towards a theory of local {S}himura
  varieties}, M$\ddot{\text{u}}$nster J. Math. \textbf{7} (2014), 273--326.

\bibitem[She19]{Shen_HNstrata}
Xu~Shen, \emph{Harder-{N}arasimhan strata and $p$ -adic period domains},
  arXiv:1909.02230.

\bibitem[SW20]{SW_berkeley}
Peter Scholze and Jared Weinstein, \emph{Berkeley lectures on $p$-adic
  geometry}, Annals of Math. Studies, vol. 207, 2020.

\bibitem[Vie21]{Viehmann_weakadmlocNewton}
Eva Viehmann, \emph{On {N}ewton strata in the
  ${B}^+_\text{dR}$-{G}rassmannian}, arXiv:2101.07510.

\end{thebibliography}
	
\end{document}